\documentclass[11pt]{amsart}
\usepackage[utf8]{inputenc}
\usepackage{amsfonts, mathtools, nccmath, amssymb, graphicx, multicol, commath, bbm, mathrsfs, bm, subfig, mathrsfs}
\usepackage{amsthm, mathabx}
\usepackage{esint}
\usepackage{braket}
\usepackage[foot]{amsaddr}
\usepackage{bibentry}
\usepackage{color,dsfont}
\usepackage[top=1in, bottom=1in, left=1in, right=1in]{geometry}
\usepackage{hyperref}
\hypersetup{colorlinks,breaklinks,
             linkcolor=blue,urlcolor=blue,
             anchorcolor=blue,citecolor=blue}

\usepackage{cite}

\newcommand{\CP}{\mathbb{C}}
\newcommand{\R}{\mathbb{R}}
\newcommand{\Z}{\mathbb{Z}}

\newcommand{\N}{\mathbb{N}}

\renewcommand{\P}{\mathscr{P}}

\newcommand{\M}{\mathcal{M}}

\newcommand{\E}{\mathbb{E}}

\newcommand{\Leb}[1][]{\mathscr{L}}
\newcommand{\bQ}{\mathbb{Q}}
\newcommand{\bP}{\mathbb{P}}
\newcommand{\bW}{\mathbb{W}}

\newcommand{\F}{\mathcal{F}}

\newcommand{\B}{\mathcal{B}}

\renewcommand{\L}{\mathcal{L}}

\newcommand{\sH}{H}
\newcommand{\sF}{F}
\newcommand{\sG}{\Gamma}
\newcommand{\C}{\mathcal{C}}

\newcommand{\m}{\mathfrak{m}}

\newcommand{\one}{\mathds{1}}
\newcommand{\eps}{\varepsilon}

\DeclareMathOperator{\supp}{supp}

\DeclareMathOperator{\tr}{tr}

\DeclareMathOperator{\Ent}{Ent}

\DeclareMathOperator{\kl}{KL}

\DeclareMathOperator{\law}{Law}

\DeclareMathOperator{\Y}{\mathcal{Y}}

\newtheorem{theorem}{Theorem}[section]
\newtheorem{assumption}{Assumption}[section]

\newtheorem{lemma}[theorem]{Lemma}
\newtheorem{proposition}[theorem]{Proposition}
\newtheorem{corollary}[theorem]{Corollary}
\theoremstyle{definition}
\newtheorem{definition}[theorem]{Definition}

\definecolor{byzantine}{rgb}{0.74, 0.2, 0.64}
\definecolor{darkgreen}{rgb}{0.1,0.6,0.1}
\definecolor{darkred}{rgb}{0.6,0,0}
\definecolor{lightgray}{rgb}{0.5,0.5,0.5}

 \newenvironment{listi}
  {\begin{list} 
 {(\roman{broj})}
{ \usecounter{broj}}
     \setlength{\labelwidth}{25pt}
  }
{   \end{list} }
\newcounter{broj}

\def\XXint#1#2#3{{\setbox0=\hbox{$#1{#2#3}{\int}$}
     \vcenter{\hbox{$#2#3$}}\kern-.5\wd0}}

\newtheorem{remarkx}[theorem]{Remark}
\newenvironment{remark}{\pushQED{\qed}\remarkx}{\popQED\endremarkx}

\numberwithin{equation}{section}

\title{A PDE approach to quantitative asymptotic stability of the optimal filter in the linear detectable setting}
\author{Minh V.H. Nguyen, Sangmin Park} 
\address{M.V.H. Nguyen, S. Park: Department of Computing and Mathematical Sciences, California Institute of Technology, Pasadena, CA 91125, USA}
\email{mvnguyen@caltech.edu, sangminp@caltech.edu}

\begin{document}

\begin{abstract}
The filtering problem asks to estimate a signal process, the initial state of which is unknown, given its partial and noisy observations. From a probabilistic perspective, the best estimate at each time is the optimal filter, which is the conditional law of the signal given all observations up to the time. The problem of asymptotic stability of the optimal filter, which asks whether the filter initialized with the incorrect initial law converges to the correctly initialized filter, is of fundamental practical and theoretical importance.

This paper studies the quantitative asymptotic stability of the optimal filter for signal-observation models with linear drift and additive Brownian noise. When the initial law is additionally assumed to be Gaussian, the optimal filter reduces to the celebrated Kalman-Bucy filter, for which the detectability condition on the signal and observation matrices is the sharp condition ensuring asymptotic stability. In the linear setting where the detectability condition holds but the initial law need not be Gaussian, we make the following contributions by studying the Zakai equation satisfied by the unnormalized filter. First, we study the asymptotic behavior of the deterministic part of the Zakai equation in the general case with nonlinear drift, and place the detectability condition in this context. Second, in the case of linear drift, we characterize the solution of the Zakai equation (hence the optimal filter) by a gauge transformation that separates the finite-dimensional stochastic dynamics from the infinite-dimensional deterministic dynamics. Third, we use this representation to establish asymptotic exponential filter stability in the $\chi^p$-divergence (with respect to a reference optimal filter), the total variation norm, and the $2$-Wasserstein distance.
\end{abstract}

\maketitle

\medskip
\noindent{\small\textbf{Keywords:}
optimal filter, Zakai equation, asymptotic stability, detectability, ground-state transform}

\noindent{\small \textbf{MSC (2020):} 60G35, 93E11, 35B40, 35Q84}


\section{Introduction}
Let $b:\R^{d}\rightarrow\R^{d}$, $h:\R^{d}\rightarrow\R^{k}$ be continuous vector fields with $k\leq d$, and let $\Gamma\succ 0$ be a positive definite $k$ by $k$ matrix. 
Let $W^1=(W_t^1)_{t\geq 0}$ and $(W_t^2)_{t\geq 0}$ be independent standard Brownian motions of respective dimensions $d$ and $k$. Consider the pair of signal process $X=(X_t)_{t\geq 0}$ and observation process $Y=(Y_t)_{t\geq 0}$ characterized by the following system of stochastic differential equations (SDE): 
\begin{equation}\label{eq:signal-obs}
\begin{cases}
    dX_t&=b(X_t)\,dt+\,dW_t^1 \\
    dY_t&=h(X_t)\,dt+\sqrt{\sG}\,dW_t^2.
\end{cases}
\end{equation}
A more general case where the signal noise has any constant nondegenerate covariance can be reduced to the above case by a coordinate change.

Given $\mu\in\P(\R^d)$, the optimal filter (or simply the filter) $\pi^{\mu}_t$ is the conditional law  of $X_t$ given $(Y_s)_{s\leq t}$ and $X_0\sim\mu$. More precisely, denoting by $\bP^\mu$ the probability measure under which $X_0\sim\mu$ and $\E^\mu$ the (conditional) expectation under $\bP^\mu$,
\begin{equation}\label{def:optimal filter}
    \pi_t^{\mu}(\psi):=\E^{\mu}[\psi(X_t)|(Y_s)_{s\leq t}] \text{ for each } \psi\in C_b(\R);
\end{equation}
see Section~\ref{sssec:SigObs} for a more precise definition and setting.

A central theoretical question regarding the filter is its almost-sure asymptotic stability:
\begin{equation}\label{eq:pi_t;asymp_stab}
    \pi_t^\mu-\pi_t^\nu \rightarrow 0 \text{ a.s. } \text{ for a broad range of initial data }\mu,\nu\in\P(\R^d)
\end{equation}
in a suitable sense of convergence of probability measures. For simplicity, we will often refer to this property simply as filter stability. Intuitively, the asymptotic stability \eqref{eq:pi_t;asymp_stab} attests to the quality of observations, as it says that the accumulation of information from the observation process overrides the (usually spurious) information contained in the initial law. As filtering is applied in situations where precise knowledge of the initial law is inaccessible, the asymptotic stability can be understood as the well-posedness of the filtering problem. Asymptotic stability in the almost-sure sense is natural as, in most realistic situations, access to a single realization of the observation process is provided.

The question of filter stability is almost as old as the study of filter itself. Stratonovich introduced the optimal filter~\cite{Stratonovich59, Stratonovich60conditional}. Concurrently, the special case of the optimal filter in the \emph{linear-Gaussian setting}, which (in this paper) refers to the case where the vector fields $b,h$ in \eqref{eq:signal-obs} are linear and additionally the initial state $X_0$ is  assumed to be Gaussian, was introduced by Kalman~\cite{Kalman60} in discrete time (known as the Kalman filter) and Kalman and Bucy~\cite{KalmanBucy61} in continuous time (known as the Kalman-Bucy filter). In the linear-Gaussian setting, the optimal filter remains Gaussian at all times, reducing the study of the filter to their mean and covariance. In their paper~\cite{KalmanBucy61}, Kalman and Bucy established filter stability under controllability and observability. However, it was not until about 50 years later that the analogous result, namely filter stability under uniform observability, was established for the optimal filter in the general setting (much more general than \eqref{eq:signal-obs}) by van Handel~\cite{vanHandel09uniform}. Roughly speaking, observability is a condition on $b,h$ that ensures that no two distinct initial laws give rise to the same observations. In other words, the signal and observation vector fields interact in a way that all dynamical information is witnessed by the observations; we refer to the elucidating survey by van Handel~\cite{vanHandel10proceedings} for further discussions. 

However, the sharp condition for the Kalman-Bucy filter to be asymptotically stable is detectability, a condition due to Wonham~\cite{Wonham68riccati} that only requires observations to see in directions that are not asymptotically stable. Outside of the linear-Gaussian setting, filter stability under detectability is limited. Van Handel formulated the detectability condition in the abstract probabilistic setting, obtaining filter stability when the state space is finite~\cite{vanH09PTRF}. When $b,h$ are linear and satisfy the detectability condition, Ocone and Pardoux established filter stability in the weak sense: any bounded and uniformly continuous moments of the two filters with different (possibly non-Gaussian) measures must coincide in large time~\cite{OconePardeoux96asymptotic}. In the absence of tightness for optimal filter, which need not hold unless the signal process itself is asymptotically stable, the asymptotic stability of moments of the filter does not, in general, imply that \eqref{eq:pi_t;asymp_stab} holds in the sense of weak convergence of probability measures.

The goal of this paper is to put into context the finite-dimensional linear-Gaussian theory within the infinite-dimensional setting by studying the equation satisfied by an unnormalized density of the optimal filter, namely the Duncan-Mortensen-Zakai equation~\cite{Duncan67,Mortensen66,Zakai69optimal} (henceforth referred to simply as the Zakai equation; see Section~\ref{sssec:filtering_eq}), in the case where $b,h$ are linear and satisfy the detectability condition, but the initial law need not be Gaussian. Henceforth, we simply refer to this setting as the \emph{linear detectable setting}.

To introduce the Zakai equation, let $\L$ be the infinitesimal generator of the signal process $X=(X_t)_{t\geq 0}$ from \eqref{eq:signal-obs}, and $\L^\ast$ its adjoint with respect to the Lebesgue measure -- i.e.
\begin{equation}\label{def:L}
    \L u = \frac12\Delta u+ b\cdot\nabla u,\qquad \L^\ast \nu=\frac12\Delta\nu-\nabla\cdot(\nu b).
\end{equation}
Then, the Zakai equation~\cite{Zakai69optimal} is a linear stochastic PDE satisfied by the unnormalized filter density (see Section~\ref{ssec:canonical_filter}), formally written as
\begin{equation}\label{eq:Zakai;PDE}
    d\rho_t=\L^\ast \rho_t\;dt +(h^T \sG^{-1} dY_t)\rho_t;
\end{equation}
$dY_t$ is understood as the It\^{o} integral.
Taking a logarithm to \eqref{eq:Zakai;PDE} and formally applying It\^{o}'s formula, the negative log potential of the unnormalized density $u(t,\cdot):=-\log\rho_t$ satisfies the viscous Hamilton-Jacobi equation with stochastic forcing (cf. Mitter and Fleming~\cite{FlemingMitter1982})
\begin{equation}\label{eq:-logZakai} 
    d u-\frac12\Delta u+\frac12|\nabla u|^2+b\cdot\nabla  u=\frac12 |h|_{\sG}^2-h^T\sG^{-1}\; d Y_t+\nabla\cdot b;
\end{equation}
here, $dY_t$ can be understood in either the It\^{o} or Stratonovich sense.

Contributions of this paper are threefold.
\begin{itemize}

    \item[\textbf{(C1)}] We place detectability from linear systems theory in the context of PDEs by connecting it with the asymptotic behavior of the deterministic part of the negative log-Zakai equation, \eqref{eq:-logZakai}.

    \item[\textbf{(C2)}] We precisely characterize the solution of the Zakai equation~\eqref{eq:Zakai;PDE} by a gauge transformation that separates the stochastic, finite-dimensional evolution following an SDE from the infinite-dimensional component satisfying a partial differential equation (PDE) (Theorem~\ref{thm:gauge}).
    
    \item[\textbf{(C3)}] We quantitatively characterize the large-time behavior of the filter,
    establishing that the $\chi^p$-divergence of the filter with respect to a reference optimal filter decays asymptotically exponentially for any $p\in [1,+\infty)$ (Theorem~\ref{thm:OF_stability}), from which asymptotic exponential filter stability in total variation and the 2-Wasserstein distance (Corollary~\ref{cor:TV,W2->0}) follows.
\end{itemize}

More precisely, our first contribution \textbf{(C1)} verifies that, when $b,h$ are linear, the detectability condition allows us to precisely characterize the asymptotic behavior of the viscous Hamilton-Jacobi equation that is the deterministic part of \eqref{eq:-logZakai}. This fact relies on the classical idea of Doob's $h$-transform~\cite{Doob57htransform} in probability or the ground state transform~\cite{Agmon82} in the study of Schr\"{o}dinger operators (or more generally, linear elliptic operators). We organize the relevant results in the general setting where $b,h$ is nonlinear in a way that is convenient to verify consistency with detectability from linear systems theory. 

Our exposition involves both the viscous Hamilton-Jacobi equation and the (equivalent) linear parabolic equation obtained by the Hopf-Cole transform. Rigorous results are formulated in terms of the latter to allow for general initial data. On the other hand, the former will be preferred for (sometimes formal) calculations and developing ideas; as we will see in the summary of main ideas, Section~\ref{sssec:main ideas}, the connections to the Kalman-Bucy filter, which is formulated in terms of the covariance matrices, are more apparent this way. Relatedly, the clean separation of the Zakai equation in the linear setting into finite-dimensional SDE and determinitic PDE, which is our second contribution \textbf{(C2)}, is more natural in terms of the solution of the viscous Hamilton-Jacobi equation.

Combining \textbf{(C1)} with \textbf{(C2)}, leads to our main contribution \textbf{(C3)}, which establishes quantitative filter stability in the linear coefficient setting under the detectability assumption. To the best of our knowledge, quantitative filter stability in this setting was not available previously; in fact, this somewhat surprising gap in the literature largely motivated this this paper, as such a result is necessary for quantitative asymptotic accuracy of the mean-field ensemble Kalman filter with respect to the optimal filter studied in a companion paper~\cite{HoffmannParkStuart2026EnKF}. Moreover, even the qualitative filter stability in the total variation norm appears to be unknown the stronger conditions of observability; the decay in $\chi^p$-divergence for arbitrary finite $p\geq 1$ is much stronger, although the reference optimal filter is carefully chosen for this mode of convergence. 

This speaks to the effectiveness of PDE techniques in studying the dynamic properties of the optimal filter in concrete settings such as \eqref{eq:signal-obs}. The downside is that we need to address subtle uniqueness and regularity issues of the Zakai equation~\eqref{eq:Zakai;PDE}, which the intrinsic methods solely relying on the conditional law representation \eqref{def:optimal filter} are free from.

We are hopeful that some of these techniques can be extended to the more general setting where $b,h$ are allowed to be nonlinear, where understanding of filter stability remains very limited. At the same time, the additional routine but detailed hard analysis leading to \textbf{(C3)} in the linear detectable setting hints at the difficulty of analyzing filter stability in the form \eqref{eq:pi_t;asymp_stab} in the nonlinear setting, where analogous explicit estimates may be difficult to obtain.

The remainder of this manuscript is organized as follows. We fix recurring notation in Section~\ref{ssec:notation}, and describe our setting in detail in Section~\ref{ssec:setting}. Section~\ref{ssec:summary} summarizes the main ideas and results, and Section~\ref{ssec:literature} provides a (partial) survey of the literature. Then our contributions \textbf{(C1)}, \textbf{(C2)}, and \textbf{(C3)} are developed in Sections~\ref{sec:C1}, \ref{sec:filter_charac}, and \ref{sec:asymp}, respectively.

\subsection{Notation}\label{ssec:notation} 
We write $\R$, $\R_+$, $\R_{++}$ for the reals, nonnegative reals, and strictly positive reals, respectively; $\CP$ denotes the complex plane.
We write $\N$ for the natural numbers and $\Z_{++}:=\N\setminus\{0\}.$ 
For $q\in\mathbb{Z_{++}}$, $\R^q$ is the $q$-dimensional Euclidean space. For $l\in\N$, the space of $l$-times continuously differentiable functions (resp. space of functions with up to $l$ continuous and bounded derivatives) from $\R^q$ to $\R^p$,  given $p,q\in\N$, is denoted by $\C^l(\R^q;\R^p)$ (resp. $\C_b^l(\R^q;\R^p)$), where the superscript $l$ is omitted if $l=0$; when $p=1$ we write $\C^l(\R^q)$ (resp. $\C_b^l(\R^q)$). Given $f\in\C^1(\R^q)$, its gradient is denoted by $\nabla f$, and its Hessian by $D^2 f$ if $f\in\C^2(\R^q)$. We write $\C_c^l(\R^q;\R^p)\subset \C_b^l(\R^q;\R^p)$ for the space of $l$-times continuously differentiable functions with compact support. For $r\geq 1$, the $r$-Lebesgue space on $\R^q$ is denoted by $L^r(\R^q)$. We write $\one$ to refer to the indicator function on $\R^q$.

We write $\R^{p\times q}$ for real $p$ by $q$ matrices. The identity matrix on $\R^q$ is denoted $I_q\in\R^{q\times q}_{++}$; when $q$ is clear from context, we often omit the subscript and write $I=I_q$. Given symmetric matrix $M\in\R^{q\times q}$, we write $M\succeq 0$ and $M\succ 0$ for, respectively, positive semi-definite and positive definite $M$. We use $\R^{q\times q}_{+},\R^{q\times q}_{++}$ to denote the cones of positive semi-definite and positive definite matrices, respectively. Given $M\in\R^{q\times q}_{++}$, we write the weighted inner-product on $\R^q$ as $\langle v, w \rangle _{M}:=v^T M^{-1} w$, and $|\cdot|_{M}$ to denote the induced norm:
\begin{equation}\label{def:Mweightednorm}
    |v|_M^2 := v^T M^{-1} v \text{ for each } v\in\R^q.
\end{equation}
When $M=I$ we obtain the standard Euclidean norm and simply write $|\cdot|$, noting that the same notation is also used
for complex modulus.

Given $M\in\R^{q\times q}$, we define its spectral abscissa, for $\Re:\CP \to \R$ denoting map to the real part of
a complex number,
\begin{equation}\label{def:alpha}
    \alpha(M):=\max\{\Re(\lambda):\;\lambda\text{ is an eigenvalue of } M\}.
\end{equation}
Whenever the norm $\|\cdot\|$ is applied to matrices, it refers to the operator norm with respect to the Euclidean norm.

We write $\B(E)$ to denote the Borel $\sigma$-algebra on a Polish space $E$.
The space of Radon measures (resp. nonnegative Radon measures) on $\R^q$ is denoted by $\M(\R^q)$ (resp. $\M_+(\R^q)$), and the space of Borel probability measures on $\R^q$ by $\P(\R^q)$. We write $X\sim\mu$ for the $q$-dimensional random vector $X$ distributed according to
$\mu\in\P(\R^q)$. 

Given $\mu\in\P(\R^q)$ and any Borel measurable function $\psi:\R^q\rightarrow\R$ growing at most quadratically at infinity, we write $\mu(\psi)=\int_{\R^q}\psi(u)\;\mu(du)$. For $\boldsymbol{\psi}=(\psi^1,\cdots,\psi^q)$ 
we write $\mu(\boldsymbol{\psi})=(\mu(\psi^1),\cdots,\mu(\psi^q))\in\R^q$.

Given a sequence $(\eta_n)_{n\in\Z_{++}}$ and $\eta$ in $\M_+(\R^q)$ we say the sequence $(\eta_n)_{n\in\Z_{++}}$ converges vaguely to $\mu$ (and write $\eta_n\xrightharpoonup[]{v}\eta$) as $n\rightarrow\infty$ if
\begin{equation}\label{def:vague}
    \eta_n(\psi)\rightarrow  \eta(\psi) \text{ as } n\rightarrow\infty \text{ for each } \psi\in C_c(\R^q).
\end{equation}
We say the sequence $(\eta_n)_{n\in\Z_{++}}$ converges to $\eta$ narrowly to $\eta$ if
\begin{equation}\label{def:narrow}
    \eta_n(\psi)\rightarrow  \eta(\psi) \text{ as } n\rightarrow\infty \text{ for each } \psi\in C_b(\R^q).
\end{equation}

We use the translation operator for measures and functions. Given $z\in\R^q$, the translation operator $\tau_z$ on measure $\eta\in\M(\R^q)$ is defined as the pushforward of $\nu$ by the map $x\mapsto x+z$, whereas $\tau_z$ acts on functions $f:\R^d\rightarrow\R^d$ the consistent way -- i.e.
\begin{equation}\label{def:tau}
    \int_{\R^q} \psi(x)\tau_z \eta(dx)=\int_{\R^q} \psi(x+z)\eta(dx),\qquad \tau_{z} f(x)= f(x-z).
\end{equation}

Gaussian measures on $\R^q$ with mean $z\in\R^q$ and covariance $M\in\R^{q\times q}_{+}$ are denoted by $N(z,M)$ or $\gamma_M^z$. When $z=0$, we will simply write $\gamma_M$, so that $N(z,M)=\gamma_M^z=\tau_z \gamma_M$. We overload the notation and also use $\gamma_M$ to refer to the density of the corresponding Gaussian measure.

We use $(\Omega,\F,(\F_t)_{t\geq 0})$ to denote the filtered measurable space in which the signal-observation \eqref{sssec:SigObs} takes place, and $\bP^\mu$ the probability measure under which $X_0\sim\mu$ and $W^1,W^2$ are standard Brownian motions; see Section~\ref{sssec:SigObs} for the explicit construction. The expectation (and conditional expectation) under $\bP^\mu$ is written by $\E^{\mu}$. When $\mu=\delta_{x}$ for some $x\in\R^d$, we use the short-hand notation $\bP^x:=\bP^{\delta_x}$, $\E^{x}:=\E^{\delta_x}$.

Given $\mu\in\P(\R^d)$, we denote the weighted entropy of each positive measurable function $f \in L^1(\mu)$ by
\begin{equation}\label{def:Ent_mu}
    \Ent_\mu(f)=\int_{\R^d} f\log f\;d\mu - \int_{\R^d} f\;d\mu \log \left(\int_{\R^d} f\;d\mu\right).
\end{equation}
Note that the normalization ensures that $\Ent \mu$ is $1$-positive homogeneous: $\Ent_{\mu}(cf)=c\Ent_{\mu}(f)$ for $c\geq 0$.

A stochastic process $U=(U_t)_{t\in\R_{+}}$ in $\R^q$ is said to be ergodic if there exists a unique probability measure $\mu\in\P(\R^q)$ such that
\begin{equation}\label{eq:ergodic mu}
    \mu(E) = \int_{\R^d} \bP^x(U_t\in E)\ \mu(dx), \qquad\text{for all }t>0
\end{equation}
for all $E\in\mathcal{B}(\R^d)$. Uppercase letters $B,W$ are reserved for standard Brownian motions in $\R^d$.

Given two stochastic processes on $\R^q$, $U=(U_t)_{t\geq 0}=((U_t^1,\cdots,U_t^q))_{t\geq 0}$ and $\tilde U=(\tilde U_t)_{t\geq 0}=$ 
$((\tilde U_t^1,\cdots,\tilde U_t^q))_{t\geq 0}$, their quadratic covariation is denoted by $[U,\tilde U]_t$, which is matrix with their $(i,j)$-component is the real-valued quadratic covariance $[U^i,\tilde U^j]_t$. We write the quadratic variation by $[U]_t:=[U,U]_t$.

\subsection{Setting}\label{ssec:setting}
We now describe our setting. Section~\ref{sssec:SigObs} constructs the probability space for the signal-observation process~\eqref{eq:signal-obs} explicitly, primarily to clarify subtleties related to the dependence of the probability measure on the initial law of $X_0$. Then, in Section~\ref{sssec:filtering_eq} we discuss the stochastic partial differential equations (SPDEs) describing the optimal filter, namely the Kushner-Stratonovich and the Zakai equations. 

\subsubsection{Signal-observation process}\label{sssec:SigObs}
We begin with the construction of the canonical probability space, inspired by Section 1.1.2 of van Handel's thesis~\cite{vanHandel06Thesis}. For $i=1,2$, let $q_1=d$, $q_2=k$, and let $\Omega^i\subset \C(\R_+;\R^{q_i})$ be the space of consisting of continuous paths $w^i:\R_+\rightarrow\R^{q_i}$ with $w^i(0)=0$. Let $\bW^q$ be the $q$-dimensional Wiener measure. Fix the sample space $\Omega=\R^d\times \Omega^1\times \Omega^2$. For each $\mu\in\P(\R^d)$, define the Borel probability measure $\bP^\mu$ on $\B(\Omega)$ defined by
\begin{equation}\label{def:Pmu}
    \bP^\mu:= \mu\otimes \bW^d\otimes\bW^k.
\end{equation}
For each $\omega=(x_0,w^1,w^2)\in\R^d\times\C_0(\R_+;\R^d)\times\C_0(\R_+;\R^k)$ define
\[X_0(\omega):=x_0,\quad W_t^1(\omega)=w^1(t),\quad W_t^2(\omega)=w^2(t).\]
Thus, under $\bP^\mu$, $X_0,W^1,W^2$ are independent, with $X_0\sim\mu$, and $W^1\sim\bW^1$, and $W^2\sim\bW^2$ being standard Brownian motions each in dimension $d$ and $k$. In particular, the probability $\bP^\mu$ depends on $\mu$ only through the initial condition $X_0$, and $W^i$ have the same law under $\bP^\mu,\bP^\nu$ for any $\mu,\nu\in\P(\R^d)$.

Define the (raw) filtration
\begin{equation}\label{def:Ft}
    \F_t:=\sigma\{X_0,W_s^1,W_s^2:\;s\leq t\} \text{ for } t\geq 0, \text{ and } \F=\sigma\left(\bigcup_{t\geq 0} \F_t\right).
\end{equation}
Let $X=(X_t)_{t\geq 0}$ be the strong solution of the signal SDE \eqref{eq:signal-obs}. Then, $Y=(Y_t)_{t\geq 0}$ is defined accordingly with initial datum $Y_0=0$. Thus, define the observation filtration 
\begin{equation}\label{def:Yt}
    \Y_t:=\sigma\{Y_s:\;s\leq t\},\quad \Y=\sigma\left(\bigcup_{t\geq 0}\Y_t\right).
\end{equation}
Then, the (conditional) expectation $\E^\mu$ is the expectation under the probability measure $\bP^\mu$.

Note that in our construction of the probability space, the probability measure $\bP^\mu$ in \eqref{def:Pmu} depends on the choice of the initial measure $\mu$. Thus, we need some technical care in defining what almost sure statements (such as \eqref{eq:pi_t;asymp_stab}) mean. 

In our model~\eqref{eq:signal-obs}, the observation noise $\Gamma^{-1/2}W^2$ is independent of the signal process $X$. Thus, by the classical result of Kailath and Zakai~\cite[Proposition 2]{KaliathZakai71AnnMathStat}, 
\begin{equation}\label{eq:P_YT;equivalence}
    \bP^\mu|_{\Y_T} \sim \bP^\nu|_{\Y_T} \text{ for any }\mu,\nu\in\P(\R^d) \text{ and } T\in(0,+\infty).
\end{equation}
In particular, the null sets in $\Y_T$ are independent of the choice of the initial law for $X_0$. This means semimartingale property of a process is independent of the initial law on every finite horizon. In fact, in our case of nondegenerate signal noise, it has been observed that $\bP^\mu|_{\Y} \sim \bP^\nu|_{\Y}$~\cite[Remark 3.8]{vanHandel09uniform}. But for our purposes, it suffices to take a countable union (resp. intersection) of the null sets (full-measure sets) over increasing horizons $[0,T_k]$ with $T_k\xrightarrow[]{k\rightarrow\infty}\infty$ to obtain a $\Y$-measurable null-measure (resp. full-measure) set that are common to all $\bP^\mu$. Accordingly, we use the following convention throughout the paper.

\textbf{Convention.} For any $\Y$-measurable statements, we say a property holds almost surely if there exists a set $\Omega'\in \Y$ such that
\begin{equation}\label{con:a.s}
    \bP^\mu(\Omega')=1 \text{ for all }\mu\in\P(\R^d)
\end{equation}
and the property holds for all $\omega\in\Omega'$.

\subsubsection{Filtering equations}\label{sssec:filtering_eq}

The relevant probability space for the optimal filter $(\pi_t^\mu)_{t\geq 0}$~\eqref{def:optimal filter} is $(\Omega,(\Y_t)_{t\geq 0},\Y,\bP^\mu|_{\Y})$. Thus, by the discussion at the end of Section~\ref{sssec:SigObs}, almost-sure statements are independent of the initial law $\mu$. 

The optimal filter $\pi_t=\pi_t^\mu$ (we suppress the superscript for generic discussions where initial law is not relevant) is characterized by the Kushner-Stratonivich equation~\cite{Kushner64}, a nonlocal and nonlinear stochastic partial differential equation (PDE) which can be formally written as
\begin{equation}\label{def:KS}
    d\pi_t=\L^\ast\pi_t\;dt+\pi_t(h-\pi_t(h))^T\Gamma^{-1}(dY_t-\pi_t(h)\;dt) \text{ in } (0,\infty)\times\R^d,;
\end{equation}
$dY_t$ here is understood in the sense of the It\^{o} integral. The optimal filter can be recovered from the solution $\rho_t$ of \eqref{eq:Zakai;PDE}
\begin{equation}\label{eq:pit-rhot}
    \pi_t(x)=\frac{\rho_t(x)}{\int_{\R^d}\rho_t(y)\;dy}.
\end{equation}

We define the precise sense in which we understand the classical solutions of the Zakai equation~\eqref{eq:Zakai;PDE} (cf. \cite[Definition 7.5, Corollary 7.2]{RozovskyLototsky18book}). 
\begin{definition}[Classical solution of the Zakai equation]\label{def:Zakai;classical}
    
    We say $\rho:(0,+\infty)\times\R^d\times \Omega\rightarrow [0,+\infty)$ is a classical solution of the Zakai equation~\eqref{eq:Zakai;PDE}  with initial trace $\nu\in\P(\R^d)$ if, for every $T>0$, the restriction of $\rho$ to $(0,T]$ satisfies the following:
    \begin{listi}
        \item $(t,x,\omega)\mapsto \rho(t,x,\omega)$ is $\B((0,T]\times \R^d)\otimes\Y_T$-measurable.
        \item We have
        \begin{equation}\label{eq:Zakai;C2}
        \rho,\nabla\rho,D^2\rho\in C((0,T]\times\R^d) \text{ almost surely.}
        \end{equation}
        \item For every $\eps\in(0,T)$ and $x\in\R^d$, the process $(\rho_t(x))_{t\in[\eps,T]}$ is a continuous $\Y_t$-semimartingale under every $\bP^\mu$, and almost surely, for all $t\in[\eps, T]$
        \begin{equation}\label{eq:Zakai;semimartingale}
            \rho_t(x)=\rho_\eps(x) + \int_{\eps}^t \frac12\Delta\rho_s(x)-\nabla\cdot(\rho_s b)(x)\;ds + \int_\eps^t h(x)^T\sG^{-1}\rho_s(x)\cdot dY_s.
        \end{equation}
        \item Almost surely, $\rho_t(x)\;dx$ converges vaguely to $\nu$ as $t\rightarrow 0$. 
    \end{listi}
    The almost-sure statements in this definition follows \textbf{Convention} in the previous section -- i.e. the $\Y$-measurable full-measure event is independent of $x,T,\nu$.
\end{definition}

Given that $\L^\ast$ is a uniformly elliptic operator, regularity is not an issue at any positive time, whereas we understand the initial datum in the vague sense to avoid imposing unnecessary regularity assumptions on it.

\subsection{Summary}\label{ssec:summary}
We now summarize the main ideas and results of this paper, organized by each contribution, $\textbf{(C2)}$ and $\textbf{(C3)}$.

\subsubsection{Summary of main ideas}\label{sssec:main ideas}
In the linear-Gaussian setting, where $b(x)=Fx, h(x)=Hx$ and additionally the initial law is Gaussian, the optimal filter is parametrized by the mean and covariance $\pi_t^\mu=N(m_t,C_t)$ where $\mu=N(m_0,C_0)$ and the mean and covariance satisfy the equations for the Kalman-Bucy filter
\begin{equation}\label{eq:Kalman-Bucy}
\begin{split}
	d m_t &= Fm_t\;dt +C_t H^T\Gamma^{-1}(dY_t-H m_t\;dt),\\
	\dot C_t &=C_t F^T+FC_t+I_d-C_t H^T\Gamma^{-1} HC_t.
\end{split}
\end{equation}
The equation \eqref{eq:Kalman-Bucy} is a special case the negative log-Zakai equation~\eqref{eq:-logZakai}, which, in the linear setting, is
\begin{equation}\label{eq:-logZakai;linear}
du-\frac12\Delta u+\frac12|\nabla u|^2+(Fx)\cdot\nabla u = \frac12|Hx|_{\sG}^2+\tr F- (Hx)^T \sG^{-1} dY_t \text{ in } (0,\infty)\times\R^d,.
\end{equation}
Setting $u(t,x)$ to be $\frac12 (x-m_t)^T C_t^{-1} (x-m_t)$ plus a scalar additive semimartingale term (which is not important for our discussion here), direct calculation verifies that \eqref{eq:Kalman-Bucy} provides a solution to \eqref{eq:-logZakai;linear}. In particular, the shape of the solution, characterized by the covariance, is deterministic and independent of the observation process $Y=(Y_t)_{t\geq 0}$.

Based on this observation, we might hope that the optimal filter in the linear setting may separate into an infnite-dimensional deterministic part characterizing the shape, and a finite-dimensional stochastic dynamics translating the distribution according to the observation process. This leads to the following ansatz:
\begin{equation}\label{eq:ansatz}
	u(t,x)=w(t,x-q_t)+x^T p_t + r_t
\end{equation}
where $w$ is a solution to the viscous Hamilton-Jacobi equation
\begin{equation}\label{eq:vHJE;linear}
     \partial_t w-\frac12\Delta w+\frac12|\nabla w|^2+(Fx)\cdot\nabla  w=\frac12|Hx|_{\sG}^2+\tr F \text{ in } (0,\infty)\times\R^d,
\end{equation}
and $(q_t,p_t,r_t)\in\R^d\times\R^d\times \R$, accounting for the stochastic effects, are to be solved for.
The role of $q_t$ in \eqref{eq:ansatz} is clear; the observations continuously cause random translation of the optimal filter. To see that $q_t$ alone is insufficient even for \eqref{eq:Kalman-Bucy}, note that the evolution of the mean $m_t$ depends on the covariance, which is natural given that data/observation plays a larger role in the Bayes update when the prior has high uncertainty. The linear tilt $x^T p_t$ is the simplest term that can achieve such a correction without affecting the Hessian of $u$, to be consistent with that the covariance evolution in \eqref{eq:Kalman-Bucy} is independent of the data. The scalar term $r_t$ provides an additional degree of freedom, and vanishes after normalization, thus is inconsequential for the optimal filter density.

Given \eqref{eq:ansatz}, we can understand the dynamics of the optimal filter by understanding the dynamics of $(q_t,p_t,r_t)_{t\geq 0}$ and $w$ separately. In subsequent sections, we highlight the main contributions of this paper.

\subsubsection{\textbf{(C1)}: Asymptotic behavior of the viscous Hamilton-Jacobi equation}\label{sssec:PDE}

Section~\ref{sec:C1} discusses the large-time behavior the solution of \eqref{eq:vHJE;linear}, which is characterized by the critical solution $(\lambda,\varphi)$ of the ergodic problem: 
\begin{equation}\label{eq:EP;linear}
    \lambda-\frac12\Delta\phi+\frac12|\nabla\phi|^2+(Fx)\cdot\nabla\phi=\frac12|Hx|_{\sG}^2+\tr F \text{ in } \R^d.
\end{equation}
Given a solution $(\lambda,\phi)$, the ratio
\begin{equation}\label{eq:f;ratio}
f(t,x):=\frac{e^{-w(t,x)}}{e^{-\lambda t-\phi(x)}}
\end{equation}
satisfies the parabolic equation
\begin{equation}\label{eq:f;linear}
\partial_t f = \frac12 f - (F x + \nabla \phi(x))x\cdot \nabla f \text{ in } (0,\infty)\times\R^d;
\end{equation}
this observation is made in Ichihara and Sheu~\cite[Theorem 4.10]{IchiharaSheu13} (see also Ichihara~\cite{Ichi11recurrence,Ichihara12}) in the context of viscous Hamilton-Jacobi equations, and corresponds to a special case of Doob's $h$-transform~\cite{Doob57htransform} for elliptic operators (cf. Pinsky~\cite{Pinsky95book}), which is also known as the ground-state transform in the context of Schr\"{o}dinger operators~\cite{Simon81JFA} (cf. Agmon~\cite{Agmon82} and Pinchover~\cite{Pinchover88Duke}). 

Thus, given a solution of the ergodic problem \eqref{eq:EP;linear} such that the diffusion
\begin{equation}\label{def:Xtphi}
dX_t^\phi = -(FX_t^\phi+\nabla \phi(X_t^\phi))\;dt + dB_t 
\end{equation}
is ergodic with an invariant measure satisfying the log-Sobolev inequality, the convergence of $w$ to $\lambda t + \phi$ can be characterized quantitatively using standard arguments in diffusion semigroups; see Lemma~\ref{lem:ent_dissipation}. 

In Section~\ref{sec:C1}, we carefully study the dynamic properties of the deterministic equations under these conditions. Most of the results are already known, and our aim is to to organize these results under the ergodicity of $X^\phi=(X_t^\phi)_{t\geq 0}$, which allows us to conveniently connect with the notion of detectability from linear systems theory. In particular, Lemma~\ref{lem:detectability} verifies that detectability of $(F,H)$ allows us to construct $\Sigma_\phi,Q_\infty\in\R_{++}^{d\times d}$ such that, $\phi(x)=\frac12 x^T \Sigma_\phi x$ is a solution to \eqref{eq:EP;linear} such that \eqref{def:Xtphi} is ergodic with invariant measure $\gamma_{Q_\infty}$.

\subsubsection{\textbf{(C2)}: Characterization of the optimal filter via gauge transformation}\label{sssec:summary;gauge}

The main result of Section~\ref{sec:filter_charac} is Theorem~\ref{thm:gauge}, which provides the following decomposition of the optimal filter under detectability of $(F,H)$: let $q_0\in\R^d$ and $\mu(dx)=e^{-w_0(x-q_0)}\;dx \in\P(\R^d)$, then the solution of the Zakai equation $\rho_t^\mu$ and optimal filter $\pi_t^\mu$~\eqref{def:optimal filter} with initial law $\mu$ are given by
\begin{equation}\label{eq:OF_gauge;summary}
\begin{split}
    \rho_t^\mu(x)= \exp\left(-w(t,x-q_t)-x\cdot p_t-r_t\right),\qquad \pi_t^\mu(dx)=\frac{\rho_t^\mu(x)\;dx}{\int_{\R^d} \rho_t^\mu(z)\;dz}, 
\end{split}
\end{equation}
where $w$ is a solution of \eqref{eq:vHJE;linear} with initial data $w_0$ (corresponding to a subprobability solution; see Proposition~\ref{prop:subprob_sol}) and $(q_t,p_t,r_t)_{t\geq 0}$ are characterized by a linear SDE \eqref{eq:qpbeta} driven by the observations $(Y_t)_{t\geq 0}$. In particular, for each $w_0,q_0$, such a decomposition is unique. The rigorous result allows general probability measure $\mu\in\P(\R^d)$ as initial law.

While formally verifying that $\rho_t$ in \eqref{eq:OF_gauge;summary} is a solution of the Zakai equation is straightforward, the rigorous justification of the characterization requires technical work. This is largely due to the surprising unavailability of uniqueness results for the Zakai equation in our setting, where observations are unbounded, initial law is allowed to be a general probability measure, and the initial datum is interpreted in a weak sense. The uniqueness of the solution of the Zakai equation in the sense of Definition~\ref{def:Zakai;classical} is proven as part Theorem~\ref{thm:gauge}, whereas Lemma~\ref{lem:Benes-Karatzas} verifies that the unnormalized filter density defined by the Kallianpur-Striebel formula is also a solution in this sense, thus identifying the gauge-transformed solution with the optimal filter.
\medskip

\subsubsection{\textbf{(C3)}: Large-time behavior of the optimal filter}\label{sssec:summar;large-time}
Section~\ref{sec:asymp} discusses the asymptotic behavior of the solution of the Zakai equation under the detectability assumption on $(F,H)$. Fortunately, Doob's $h$-transform \eqref{eq:f;ratio} combines particularly well with the decomposition~\eqref{eq:ansatz}. As $(t,x)\mapsto \lambda t+\phi(x)$ solves \eqref{eq:vHJE;linear}, writing $\mu_\phi=N(q_0,\Sigma_\phi)$, $\rho^{\mu_\phi}_t(x):=e^{-\lambda t -\phi(x-q_t)-p_t\cdot x - r_t}$ is a solution of the Zakai equation~\eqref{eq:Zakai;PDE} with invariant shape. Given any initial law of the form $\mu(dx)=e^{-w_0(x-q_0)}\;dx\in\P(\R^d)$, 
\[\frac{\rho_t^\mu(x)}{\rho_t^{\mu_\phi}(x)}=\exp\left(-w(t,x-q_t)-p_t\cdot x -r_t + \lambda t + \phi(x-q_t)+p_t\cdot x+r_t\right)=f(t,x-q_t).\]
In particular, once translated by $q_t$, this ratio is deterministic, and well-behaved. This leads to our first quantitative result, Corollary~\ref{cor:entropydissipation}, which states
\begin{equation}\label{eq:entropydissipation;summary}
    \Ent_{\tau_{q_t}\gamma_{Q_\infty}}\left(\frac{\rho_t^\mu}{\rho_t^{\mu_\phi}}\right)\leq e^{-\kappa_\phi t}\Ent_{\tau_{q_0}\gamma_{Q_\infty}}\left(\frac{d\mu}{d\mu_\phi}\right)
\end{equation}
where $\kappa_\phi>0$ is the log-Sobolev constant of $\gamma_{Q_\infty}$. While the simplicity of \eqref{eq:entropydissipation;summary} is appealing, $q_t$ need not (and will not, in general) faithfully track the true signal producing the observations. Moreover, the estimate is formulated in terms of the ratio of the unnormalized filter densities, not the optimal filters themselves. Thus, Corollary~\ref{cor:entropydissipation} is perhaps best understood as identifying a particularly well-behaved analytic quantity related to the optimal filter, rather than as a statement of asymptotic stability of optimal filter; see Remark~\ref{rmk:entropydissipation} for a more detailed discussion.

In order to obtain more probabilistically meaningful statements on the behavior of the optimal filter, note
\begin{equation}\label{eq:OF;ratio;summary}
    \frac{d\pi_t^\mu}{d\pi_t^{\mu_\phi}}(x)=Z_t^{-1}\frac{\rho_t^\mu(x)}{\rho_t^{\mu_\phi}(x)}=Z_t^{-1}f_t(x-q_t)
\end{equation}
where $Z_t$ is the random normalization constant (note that $\pi_t^{\mu_\phi}=N(q_t-\Sigma_\phi p_t,\Sigma_\phi)$)
\begin{equation}\label{def:Zt}
    Z_t:=\int_{\R^d} f_t(x-q_t)\;d\pi_t^{\mu_\phi} = \int_{\R^d} f_t(x-\Sigma_\phi p_t)\;d\gamma_{\Sigma_\phi}(x).
\end{equation}
As
\begin{align*}
    \chi^p(\pi_t^\mu||\pi_t^{\mu_\phi}) = \int_{\R^d} \left|\frac{d\pi_t^{\mu}}{d\pi_t^{\mu_\phi}}-1\right|^p\;d\pi_t^{\mu_\phi} = Z_t^{-p} \int_{\R^d} |f_t(x-\Sigma_\phi p_t)-Z_t|^p\;d\gamma_{\Sigma_\phi}(x),
\end{align*}
after a bit of hard analysis, we obtain our main result, Theorem~\ref{thm:OF_stability}: given $\mu\in\P(\R^d)$ satisfying $(\gamma_{\beta Q_\infty}e^{\phi}\tau_{-q_0}\mu)(\R^d)<+\infty$ for any $\beta>1$, we have
\begin{equation}\label{eq:chip_decay;summary}
    \limsup_{t\rightarrow\infty}\frac1t \log(\chi^p(\pi_t^\mu||\pi_t^{\mu_\phi}))^{1/p} \leq -\alpha_\phi<0 \text{ a.s. }
\end{equation}
The tail condition on $\mu$ holds automatically when $(F,H)$ satisfies the stronger condition called observability; see Remark~\ref{rmk:observability}.
Combined with standard functional inequalities, we obtain filter stability in total variation $\|\cdot\|_{TV}$ and the 2-Wasserstein distance $W_2$, Corollary~\ref{cor:TV,W2->0}: for initial laws $\mu,\nu\in\P(\R^d)$ satisfying the assumptions of Theorem~\ref{thm:OF_stability},
\begin{equation}\label{eq:TV,W2_decay;summary}
    \limsup_{t\rightarrow\infty}\frac1t \log\|\pi_t^\mu-\pi_t^\nu\|_{TV},\quad \limsup_{t\rightarrow\infty} \frac1t\log W_2(\mu,\nu) \leq -\alpha_\phi <0 \text{ a.s. }
\end{equation}

\subsection{Literature}\label{ssec:literature}
We discuss some of the earlier works most relevant for placing the current manuscript in context. Section~\ref{ssec:literature;OF} discusses literature related to the optimal filter and filtering equations, and Section~\ref{ssec:literature;vHJE} the large-time behavior of linear parabolic and Hamilton-Jacobi equations. We apologize for any omissions.

\subsubsection{Optimal filter and filtering equations}\label{ssec:literature;OF}
The body of works of the optimal filter and the filtering equations is too vast to be summarized here. Thus, we can only include here previous works that are most directly relevant to this paper, and refer the interested readers to the book of Bain and Crisan~\cite[Chapter 1.3]{BainCrisan09} for a detailed mathematical and historical account.

Stratonovich~\cite{Stratonovich59,Stratonovich60conditional} initiated the study of recursive procedures for the evolution of the optimal filter~\eqref{def:optimal filter} in discrete time, and Kushner~\cite{Kushner64} formally derived the stochastic partial differential equation characterizing the optimal filter in continuous time, now known as the Kushner--Stratonovich equation. Kallianpur and Striebel~\cite{KalStr68} introduced the change-of-measure representation of the optimal filter, now known as the Kallianpur--Striebel formula, which became the basis of the reference probability approach to nonlinear filtering. Duncan~\cite{Duncan68evaluation,Duncan70absolute}, Mortensen~\cite{Mortensen66}, and Zakai~\cite{Zakai69optimal} independently derived the linear SPDE satisfied by the unnormalized conditional law, the Duncan--Mortensen--Zakai equation; for simplicity, we refer to this equation as the Zakai equation throughout the paper. Fujisaki, Kallianpur, and Kunita~\cite{FujisakiKallianpurKunita1972} derived the SPDE characterizing the optimal filter in a more general setting than \eqref{eq:signal-obs}. We refer to these equations collectively as the filtering equations.

The vast body of works studied the well-posedness of the filtering and related equations. Just to mention a few, these equations were analyzed using functional analytic approach by Pardoux~\cite{Pardoux1979,Pardoux1982} (see also Chaleyat-Maurel, Michel, and Pardoux~\cite{ChaleyatMaurelMichelPardoux1990}), and stochastic parabolic PDE approach by Krylov and Rozovskii~\cite{KrylovRozovskii77,KrylovRozovskii1979,KrylovRozovskii82}. Fleming and Mitter~\cite{FlemingMitter1982} introduced the negative log-Zakai equation~\ref{eq:-logZakai} thereby establishing connections with stochastic optimal control. We also mention the works of Baras, BLankenship, and Hopkins~\cite{BarasBlankenshipHopkins1983}, Kurtz and Ocone~\cite{KurtzOcone1988}, and Bhatt, Kallianpur, and Karandikar~\cite{BhattKallianpurKarandikar1995}. Some of the more recent works include those of Krylov~\cite{Krylov10well-posedness} and Sun, Zhou, Yau~\cite{SunZhouYau26well-posedness} which studied well-posedness without boundedness assumptions on the coefficients, and the work of Crisan and Pardoux~\cite{CrisanPardoux26} whichestablished uniqueness in spaces of measures under remarkably general assumptions on signal and observation processes, including correlated noises and observation-dependent coefficients. However, the well-posedness results above require either boundedness assumptions on coefficients (in particular the observations), or growth or integrability conditions on the initial density or the solution. As such assumptions are incompatible with our solution class (Definition~\ref{def:Zakai;classical}) which is convenient to verify for the the unnormalized optimal filter and the gauge-transformed solution \eqref{eq:OF_gauge;summary}, our paper provides a uniqueness result tailored to our setting.

Asymptotic stability of the filter with respect to the initial data can be thought of as the well-posedness of the filtering problem, and is of fundamental importance: it implies that, regardless of the initialization, the filter will eventually reach the same conclusion as if it was initialized with the true initial law. Van Handel~\cite{vanHandel09stability} showed that, in broad generality, asymptotic stability of the signal process with nondegenerate observation noise implies that the filter is stable; see also the earlier works of Budhiraja~\cite{Budhiraja2003} Baxter, Chigansky, and Lipster~\cite{BaxChiPav04Wonham}. On the other hand, one intuitively expects filter stability when the system is observable -- i.e. no two signals with different initial laws the same observation sequence. Van Handel showed that filter stability follows from observability on compact state space~\cite{vanH09PTRF} and from the stronger condition of uniform observability on noncompact state spaces~\cite{vanHandel09uniform}; the latter applies to our linear coefficient setting when $(F,H)$ satisfies the observability condition.

Works on the quantitative stability of the optimal filter are more limited. Exponential asymptotic stability was established by Atar and Zeitouni on finite~\cite{AtarZeitouni97lyapunov} and compact~\cite{AtarZeitouni97exp} state spaces, where compactness leads to ergodicity of the signal, and by Atar~\cite{Atar98AoP} for noncompact state space under full observations. For nonergodic signals and partial observations, some quantitative stability results for nonergodic signals are available with strong assumptions on the initial data, some quantitative stability results are also available; we mention the papers of Stannat~\cite{Stannat05time-dep,Stannat06stability} based on a variational approach and the ground-state transform, and the thesis of van Handel~\cite[Chapter 4.1]{vanHandel06Thesis} using a probabilistic approach. Interestingly, Stannat applies the ground-state transform to the pathwise filter equation with time-dependent coefficients, as opposed to our approach which applies the ground-state transform to the deterministic part, which is used then in conjunction with the gauge transform~\eqref{eq:OF_gauge;summary} to deduce behavior of the filter.

From connections to linear systems theory, one might expect filter stability to hold under a weaker condition than observability, namely detectability, which, roughly speaking, only asks for observability in the unstable directions of the signal process. The short and illuminating survey~\cite{vanHandel10proceedings} elaborates on connections between filter stability and systems theory. Asymptotic stability of the optimal filter under detectability is known only the case of finite state space~\cite{vanH09PTRF}, the linear-Gaussian setting (namely the Kalman filter), or in the linear setting with non-Gaussian initial data in a weak sense (convergence of bounded uniformly continuous moments)~\cite{OconePardeoux96asymptotic}. We also mention the work of Mitter and Newton~\cite{MitterNewton03_variational} on the connection between filtering and optimal control via the variational characterization of the Bayes formula.

The Kalman filter, introduced in the pioneering work of Rudolf Kalman~\cite{Kalman60}, is a special case of the optimal filter in the linear-Gaussian setting; its continuous-time counterpart, the Kalman-Bucy filter, was introduced by Kalman and Bucy~\cite{KalmanBucy61}. 
Bucy established the global well-posedness and asymptotic stability of the covariance matrices under the assumptions of controllability and observability~\cite{Bucy67global}, and Wonham analyzed the equation under the weaker conditions of stabilizability and detectability~\cite{Wonham68riccati}. There are also several
results of a more quantitative nature: Anderson~\cite{anderson71stability} used the time-varying Lyapunov function to obtain exponential convergence rate of the covariance matrices, and Bougerol~\cite{Bougerol93} showed that the Riccati recursions are strict contractions in a natural Riemannian metric for positive definite matrices. 

Ocone and Pardoux~\cite{OconePardeoux96asymptotic} proved the asymptotic stability of the Kalman-Bucy filter in the probabilistic sense, namely that two Kalman-Bucy filter distributions with different initial data converge weakly (as probability measures) almost surely. Moreover, they derived the asymptotic exponential rates, where the exponent is (essentially) given by the spectral abscissa of the limiting observer system. Moreover, they adapted the techniques of Makowski~\cite{Makowski86} (see also Makowski and Sowers~\cite{SowersMakowski90}) and Bene\v{s} and Karatzas~\cite{BenesKaratzas83} to establish filter stability in a weak sense given non-Gaussian initial data.

\subsubsection{Large-time behavior of viscous Hamilton-Jacobi equations and linear parabolic equations}\label{ssec:literature;vHJE}

Applying the Hopf--Cole transform to the ergodic problem~\eqref{eq:ep} leads to the elliptic equation associated with the operator $\L^\ast-V+\lambda$ (where $\L^\ast$ is as in \eqref{def:L}). In the case $b=0$, this is known as the Schr\"{o}dinger operator. Simon~\cite{Simon81JFA} introduced the notion of criticality for Schr\"{o}dinger operators and studied the large-time behavior of the $L^p$-operator norms of the associated semigroup; see also his survey~\cite{Simon82BAMS}. Murata~\cite{Murata84Largetime, Murata1986} subsequently connected the large-time behavior of the Schr\"{o}dinger semigroup with the existence and growth properties of positive solutions of the corresponding elliptic equation.

In a more general setting where $b$ is a gradient vector field, Agmon~\cite{Agmon82} introduced the notion of a ground state as a positive solution of minimal growth at infinity. Pinchover~\cite{Pinchover88Duke} systematically developed the criticality theory for the case  general non-gradient $b$, in terms of positive solutions and positive minimal Green functions. Pinsky~\cite{Pinsky95book} further developed the probabilistic interpretation of this theory, relating criticality and subcriticality to recurrence and transience, respectively, of the diffusions obtained through Doob's $h$-transform which corresponds to $X^\phi$ in \eqref{def:Xtphi} in our setting. For a more thorough historical account, see the notes and discussion in~\cite[Chapter~4]{Pinsky95book}.

In the context of viscous Hamilton--Jacobi equations, Ichihara~\cite{Ichi11recurrence,Ichihara12} and Ichihara and Sheu~\cite{IchiharaSheu13} studied the ergodic problem, recurrence properties of the associated diffusions, and the large-time behavior of solutions using a combination of viscosity-solution methods, stochastic optimal control, and probabilistic techniques building on Pinsky's approach. On the other hand, using viscosity solutions and PDE techniques, Barles and Meireles~\cite{barles2016unbounded} studied the ergodic problem \eqref{eq:ep} and Barles, Quaas, and Rodr\'{i}guez-Paredes~\cite{barles2020large} established large-time behavior of the solutions of the viscous Hamilton-Jacobi equations.

\section{Asymptotic behavior of associated PDEs and the role of detectability}\label{sec:C1}

Let $b\in C(\R^d;\R^d)$, and $V\in C(\R^d)$. We consider initial value problems for the linear parabolic PDE
\begin{equation}\label{eq:parabolic}
\begin{split}
    \partial_t\varrho=\frac12\Delta \varrho - \nabla\cdot(b\varrho)-V\varrho \text{ in } (0,\infty)\times\R^d,
\end{split}
\end{equation}
where initial trace is interpreted in the vague trace sense: $\varrho(t,x)\;dx \rightharpoonup \nu\in\M_+(\R^d)$ vaguely. Equivalently, we consider the negative log potential $w=-\log\varrho$, which satisfies the viscous quadratic Hamilton-Jacobi equation
\begin{equation}\label{eq:hje}
    \partial_t w -\frac{1}{2}\Delta w + \frac12|\nabla w|^2 + b\cdot\nabla w = V+\nabla\cdot b \text{ in } (0,\infty)\times\R^d,
\end{equation}
where initial data is interpreted in the vague trace sense for $t\mapsto e^{-w(t,x)}\;dx$.

We are interested in the asymptotic behavior of the solution as $t\rightarrow\infty$. To this end, we consider the eigenvalue problem for the corresponding elliptic equation: let $\bar\varrho:\R^d\rightarrow\R_+$ satisfy
\begin{equation}\label{eq:parabolic;eigenfunc}
    \frac12\Delta\bar\varrho-\nabla\cdot(b\bar\varrho)-V\bar\varrho +\lambda\bar\varrho =0 \text{ in } \R^d,\quad \int_{\R^d}\bar\varrho(x)\;dx <+\infty;
\end{equation}
often we will set $\int_{\R^d}\bar\varrho(x)\;dx =1$.
Equivalently, the negative log potential $\phi=-\log\bar\varrho$ is the ergodic profile satisfying the ergodic problem
\begin{align}\label{eq:ep}
    \lambda-\frac12\Delta\phi +\frac12|\nabla\phi|^2+ b\cdot\nabla\phi=V+\nabla\cdot b \quad\text{ in }\R^d,
\end{align}
and characterizes the large-time behavior of the solution to \eqref{eq:hje}. Note that if \eqref{eq:parabolic;eigenfunc} holds, $-\lambda$ is the eigenvalue of the operator $\L^\ast \rho=\frac12\Delta \rho - \nabla\cdot(b\rho)$; we have chosen this sign convention so that $(t,x)\mapsto \lambda t+\phi(x)$ satisfies the equation \eqref{eq:hje}.

The goal of this section is twofold. Firstly, we study the asymptotic behavior of the solutions of \eqref{eq:parabolic}-\eqref{eq:hje} under the following assumptions.
\begin{assumption}\label{ass:1}
    \begin{listi}
    \item $b \in C^{1,\alpha}_{loc}(\R^d;\R^d)$, $b(x)\leq C(1+|x|)$ for some $C>0$;
    \item $V\in C^{0,\alpha}_{loc}(\R^d)$ is nonnegative;
\end{listi}
\end{assumption}

\begin{assumption}\label{ass:2}
\begin{listi}
    \item There exists a solution $(\lambda,\phi)$ to \eqref{eq:ep} such that $e^{-\phi}\in L^1(\R^d)$ 
    and the diffusion process $X^\phi$~\eqref{def:Xtphi},
    \[dX_t^\phi = - (b(X_t)+\nabla\phi(X_t))dt +dB_t, \quad t\geq 0\]
    is nonexplosive and ergodic; $B=(B_t)_{t\geq 0}$ here is a standard Brownian motion.

    \item The invariant measure $\m_\phi$ of $X^\phi$ satisfies the log-Sobolev inequality
    \begin{equation}\label{eq:lsi}
        \tag{LSI}\Ent_{\m_\phi}(g)\leq \frac{1}{2\kappa^\phi}\int_{\R^d}\frac{|\nabla g|^2}{g}d\mu \text{ for all }g\in C_c^\infty(\R^d;\R_+).
    \end{equation}
\end{listi}

\end{assumption}

Note that $\phi$ in Assumption~\ref{ass:2} is defined only up to an additive constant; this will be inconsequential for the optimal filter after normalization. 

These assumptions are formulated for our second goal, which is to verify that these are satisfied in the linear setting where $b(x)=Fx$ and $h(x)=Hx$  (with $F\in \R^{d\times d},\;H\in\R^{k\times d}$) satisfy the detectability condition from the linear systems theory, which we define below.

\begin{assumption}[Detectability]\label{ass:detectable}
    There exists a matrix $L\in\R^{d\times k}$ such that
    \begin{equation}\label{eq:detectable}
        \alpha(\sF-L\sH):=\max\{\Re(\lambda):\;\lambda\text{ is an eigenvalue of } \sF- L\sH\}<0.
    \end{equation}
    If \eqref{eq:detectable}, we say the pair $(\sF,\sH)$ is detectable.
\end{assumption}
In our setting~\eqref{eq:signal-obs} with nondegenerate signal noise, which implies controllability, detectability is a necessary and sufficient structural condition on $F$ and $H$ for asymptotic stability of the Kalman-Bucy filter~\cite{Wonham68riccati,anderson71stability}. 

This section is organized as follows. After establishing the basic properties of the solution of the ergodic problem $\phi,\m_\phi$ in Section~\ref{ssec:C1:phi,mphi}, we study the diffusion semigroup generated by $X^\phi$~\eqref{def:Xtphi} in Section~\ref{ssec:C1:diffusion_semigroup}. Then, Section~\ref{ssec:C1;subprob} discusses the relationship between the semigroup solution and subprobability solution of \eqref{eq:parabolic} via Doob's $h$-transform, which will be useful later in specifying a unique solution of the equation, which is nontrivial due to the interpretation of initial datum in the vague trace sense. All these sections are formulated for general nonlinear $b,h$ satisfying Assumptions~\ref{ass:1}-\ref{ass:2}. Then, Section~\ref{ssec:consistency} states the main result of this section, Lemma~\ref{lem:detectability}, which verifies that Assumption~\ref{ass:detectable} implies Assumption~\ref{ass:2}, and establishes useful facts for later sections.

As already mentioned in Section~\ref{sssec:PDE}, most statements in the current section concerning the general equations \eqref{eq:parabolic}-\eqref{eq:hje} are already known in some form, and we do not claim any novelty. Yet, we provide complete proofs of each statement as either the corresponding results are not available in the precise form desirable for our setting, or doing so avoids introducing heavier machinery (and notation) unnecessary for our purpose; we have tried to provide reference to related results when possible. Thus, this section is best understood as providing a framework of preliminary results upon which the later sections can be conveniently built on, treating with care subtleties that arise in our setting.

\subsection{Ergodic problem and invariant measure}\label{ssec:C1:phi,mphi}
From the classical interior Schauder estimates, regularity of $\phi$ and the density of $\m_\phi$ follows.
\begin{lemma}\label{lem:coeff_reg}
    Let Assumption~\ref{ass:1} (i)-(ii) and Assumption~\ref{ass:2} (i) hold. Then the solution $\phi$ to the ergodic problem \eqref{eq:ep} and the Lebesgue density of the invariant measure $\m_\phi$ is in $C_{loc}^{2,\alpha}$.
\end{lemma}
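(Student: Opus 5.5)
The plan is to work with the linear equation $\frac12\Delta\bar\varrho-\nabla\cdot(b\bar\varrho)-V\bar\varrho+\lambda\bar\varrho=0$ satisfied by $\bar\varrho=e^{-\phi}$, rather than with the quadratic equation \eqref{eq:ep}. Schauder theory applies cleanly to the linear form. First I fix the notion of solution: $\phi$ solves \eqref{eq:ep} classically, so $\phi\in C^2$, $\bar\varrho=e^{-\phi}\in C^2$ is positive, and the Hopf--Cole computation shows \eqref{eq:parabolic;eigenfunc} holds. (If $\phi$ were only a weaker solution, say $W^{2,p}_{loc}$ or viscosity, the first step would be to upgrade $\bar\varrho$ to $C^{1,\beta}_{loc}$ via $L^p$ estimates and Sobolev embedding; the rest of the argument is unchanged.)

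Next I rewrite \eqref{eq:parabolic;eigenfunc} in nondivergence form:
\[
\frac12\Delta\bar\varrho-b\cdot\nabla\bar\varrho+(\lambda-V-\nabla\cdot b)\bar\varrho=0 .
\]
By Assumption~\ref{ass:1}, $b\in C^{1,\alpha}_{loc}$, so $b$ and $\nabla\cdot b$ lie in $C^{0,\alpha}_{loc}$. Also $V\in C^{0,\alpha}_{loc}$. The operator is therefore uniformly elliptic with locally Hölder coefficients. Interior Schauder estimates on each ball $B_R$ then give $\bar\varrho\in C^{2,\alpha}(B_{R})$, since $\bar\varrho$ is a bounded classical solution on $B_{2R}$ (e.g. Gilbarg--Trudinger, Theorem~6.17). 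Hence $\bar\varrho\in C^{2,\alpha}_{loc}$. Because $\bar\varrho>0$ and $\log$ is smooth on $(0,\infty)$, it follows that $\phi=-\log\bar\varrho\in C^{2,\alpha}_{loc}$.

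For the invariant measure, I identify the density of $\m_\phi$. The generator of $X^\phi$ is $\L^\phi g=\frac12\Delta g-(b+\nabla\phi)\cdot\nabla g$. Its formal adjoint is $(\L^\phi)^\ast m=\frac12\Delta m+\nabla\cdot((b+\nabla\phi)m)$. This equation has $C^{2,\alpha}_{loc}$ coefficients, because $b\in C^{1,\alpha}_{loc}$ and $\nabla\phi\in C^{1,\alpha}_{loc}$. Ergodicity in Assumption~\ref{ass:2}(i) gives an invariant probability $\m_\phi$ solving $(\L^\phi)^\ast\m_\phi=0$ in the distributional sense.

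The main obstacle is the regularity of $\m_\phi$, which is known only as a measure. I would first use the standard result that an invariant measure of a nondegenerate diffusion with locally Lipschitz drift has a continuous, in fact $W^{1,p}_{loc}$, positive density (Bogachev--Krylov--Röckner). That density is then a weak solution of a divergence-form equation with $C^{1,\alpha}_{loc}$ coefficients, so Schauder theory for divergence-form equations upgrades it to $C^{1,\alpha}_{loc}$. Expanding the divergence puts it in nondivergence form with $C^{0,\alpha}_{loc}$ coefficients, and the interior Schauder estimate from the previous step then gives $C^{2,\alpha}_{loc}$.

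Alternatively, one could try to identify the density explicitly as $e^{-2\phi}$ or a similar expression; this would work in the gradient case, but $b$ need not be a gradient, so I would rely on the regularity bootstrap instead.
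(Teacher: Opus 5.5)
Your argument is correct and follows the paper's route: interior Schauder estimates for the linear equation satisfied by $\bar\varrho=e^{-\phi}$ (then $\phi=-\log\bar\varrho$), followed by Schauder estimates for the stationary adjoint equation of the invariant density. Your preliminary appeal to Bogachev--Krylov--R\"ockner, which first produces a $W^{1,p}_{loc}$ density for $\m_\phi$, is a good addition, since the paper simply assumes $\m_\phi$ has a density solving that equation. Two small slips: the drift $b+\nabla\phi$ is $C^{1,\alpha}_{loc}$, not $C^{2,\alpha}_{loc}$; and the final upgrade from $C^{1,\alpha}_{loc}$ to $C^{2,\alpha}_{loc}$ should use Schauder theory for distributional solutions of $\Delta m=g$ with H\"older $g$, because the classical interior estimate you cite presupposes $m\in C^2$.
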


\begin{proof}
    As $\bar\varrho=e^{-\phi}$ solves \eqref{eq:parabolic;eigenfunc} the interior Schauder estimates imply $\bar\varrho,\phi\in C_{loc}^{2,\alpha}$. Moreover, the density of $\m_\phi$, which we temporarily denote by $m_\phi$ here, solves the elliptic equation with $C^{1,\alpha}$ coefficients
    \[\frac12\Delta m_\phi + \nabla\cdot((b+\nabla\phi) m_\phi)=\frac12\Delta m_\phi +(b+\nabla\phi)\cdot\nabla m_\phi + \nabla\cdot(b+\nabla\phi) m_\phi=0.\]
    Thus, again by the interior Schauder estimates, we conclude $m_\phi \in C_{loc}^{2,\alpha}$ density.
\end{proof}

From the criticality theory of linear elliptic operators, it follows that there is at most one $\phi$ satisfying Assumption~\ref{ass:2} (i). We leave this uniqueness as a remark to avoid rigorously introducing objects in the criticality theory, which will not be necessary for the remainder of this paper; we refer the interested readers to the book of Pinsky~\cite{Pinsky95book} on the subject. 
\begin{remark}[Uniqueness of $\phi$]\label{rmk:phi_unique}
    From the criticality theory of elliptic operators, one can deduce uniqueness (up to an additive constant) of the solution $(\lambda,\phi)$ of \eqref{eq:ep} such that $X^\phi$ is ergodic. For a second order elliptic operator $L$, $L-\lambda$ is critical (or equivalently, $\lambda$ is a critical eigenvalue for $L$) if $L-\lambda$ possesses a nonnegative harmonic function but not a Green's function. Criticality of the associated elliptic operator has deep connections recurrence of the corresponding diffusion and the relationship is carefully detailed in the book of Pinsky~\cite{Pinsky95book}. Temporarily, let $h$ be a sufficiently regular, strictly positive function (not to be confused with the observation vector field). Writing
    \[Lu = \frac12\nabla\cdot(A\nabla u) + b\cdot\nabla u- Vu = L_0 u + cu,\]
    Doob's $h$-transform~\cite{Doob57htransform}
    \begin{equation}\label{eq:Doobh}
    L^h u := \frac{1}{h}L(h u)= L_0 u + A\nabla\log h \cdot \nabla u + \frac{Lh}{h}
    \end{equation}
    preserves criticality~\cite[Theorem 4.3.3. (iv); see also Theorem 4.2.2]{Pinsky95book}. Taking $L_0=\L$, $c=-V$, and $h=\bar\varrho=e^{-\phi}$, $L\bar\varrho=-\lambda\bar\varrho$. Thus 
    \[\L_\phi:=\frac12\Delta-(b-\nabla\phi)\cdot\nabla =L^h + \lambda.\] 
    Thus, the generator $\L_\phi$ of $X^\phi$ \eqref{def:Xtphi} is critical iff $\L-V+\lambda$ is critical.

    We outline a short argument for the uniqueness. Suppose $(\lambda,\phi),(\lambda',\phi')$ are such solutions. Then direct calculations reveal that $\L_\phi e^{-\phi'+\phi}=(\lambda-\lambda')e^{-\phi'+\phi}$. Recurrence of $X^\phi,X^{\phi'}$ implies that $\L_\phi,\L_{\phi'}$ are critical, hence that $-\lambda,-\lambda'$ are critical eigenvalues of $\L-V$~\cite[Theorem 4.3.3. (i), (iv)]{Pinsky95book}. As the critical eigenvalue for $\L$, if it exists, is unique, $\lambda=\lambda'$~\cite[Theorem 4.3.2]{Pinsky95book}. In particular, $e^{-\phi'+\phi}$ is a positive harmonic function for the operator $\L_\phi$, which has no zeroth order term, thus it must be a constant by the Liouville-type theorem~\cite[Corollary 4.3.6]{Pinsky95book}.
\end{remark}

\subsection{Diffusion semigroup}\label{ssec:C1:diffusion_semigroup}

In this section, we consider the $L^1(\m_\phi)$-semigroup generated by the diffusion $X^\phi=(X_t^\phi)_{t\geq 0}$. Define the diffusion semigroup $(P_t^\phi)_{t\geq 0}$
\begin{equation}\label{def:Ptf}
    P_t^\phi f(x)=\E^x[f_0(X_t^\phi)] \text{ for bounded measurable } f:\R^d\rightarrow\R.
\end{equation}
As $P_t^\phi$ is a contraction on any $L^p(\m_\phi)$ for $1\leq p<+\infty$ by Jensen's inequality, the semigroup extends naturally to these spaces. Unless otherwise stated, we will consider $P_t^\phi$ as an operator from $L^1(\m_\phi)$ to itself. The infinitesimal generator $\L_\phi$ is defined by
\begin{equation}\label{def:Lphi}
    \L_\phi f := \lim_{t\rightarrow 0}\frac{P_tf - f}{t} = \frac12\Delta f- (b+\nabla\phi)\cdot\nabla f,
\end{equation}
where the equality on the right is understood in $L^1(\m_\phi)$.
Moreover, when $f,g$ are sufficiently nice functions, the dual operator $\L_\phi^\ast$ with respect to the $\m_\phi$-duality pairing -- i.e. 
\begin{equation}\label{eq:Lphi;duality}
    \langle \L_\phi f,g\rangle_{\m_\phi}=\langle f,\L_\phi^\ast g\rangle_{\m_\phi}
\end{equation}
is given by
\begin{equation}\label{def:Lphi^star}
    \L_\phi^\ast g= \frac12\Delta g + (\nabla\log\m_\phi+b+\nabla\phi)\cdot\nabla g, 
\end{equation}
where the equality is understood in duality with $L^1(\m_\phi)$. We will only apply the duality formula \eqref{eq:Lphi;duality} for nice functions (only once, in fact, in the proof of Lemma~\ref{lem:ent_dissipation}), thus we will not be too careful with the domains of the generators and the duals in the remainder of this paper. 

As the drift coefficient satisfies $b+\nabla\phi\in C_{loc}^{1,\alpha}$ (Lemma~\ref{lem:coeff_reg}), the semigroup $(P_t f)_{t>0}$ solves the corresponding parabolic equation in the classical sense, with $L^1(\m_\phi)$-initial trace. We record this in the definition below.
\begin{definition}[Semigroup solution]\label{def:semigroup}
    Let $0\leq f_0\in L^1(\m_\phi)$. Then we say
    \[f(t,x):= P_t^\phi f_0(x)=\E[f_0(X_t^\phi)|X_0=x]=\E^x[f_0(X_t^\phi)]\]
    is the $L^1(\m_\phi)$-semigroup solution (or simply the semigroup solution) of the equation
    \begin{equation}\label{eq:PDE;semigroup}
        \begin{cases}
            \partial_t f = \L_\phi f = \frac12\Delta f-(b+\nabla\phi)\cdot\nabla f \text{ in } (0,\infty)\times\R^d,\\
            f_t \rightarrow f_0 \text{ in } L^1(\m_\phi) \text{ as } t\rightarrow 0.
        \end{cases}
    \end{equation}
\end{definition}

Given the invariant measure, the usual entropy dissipation formula for diffusion holds despite the nonreversibility. This seems to be widely known and has been proven in generality by Fontbona and Jourdain~\cite[Corollary 1.6]{FontbonaJourdain16}; see also \cite{ArnoldCarlenJu08}. Nevertheless, we include an analytic proof here that is more suited for our setting.
\begin{lemma}\label{lem:ent_dissipation}
    Let Assumption~\ref{ass:2} (i) hold, and $f_t:=P_t^\phi f_0$ with $f_0\in L^1(\m_\phi)$ satisfying $\Ent_{\m_\phi}(f_0)<+\infty$. Then
    \begin{equation}\label{eq:Ent_dissipation}
        \Ent_{\m_\phi}(f_t)\leq \Ent_{\m_\phi}(f_0)-\frac12\int_0^t \int_{\R^d}\frac{|\nabla f_s|^2}{f_s}\;d\m_\phi\;ds.
    \end{equation}
    In particular, under Assumption~\ref{ass:2} (ii),
    \begin{equation}\label{eq:Ent_decay}
        \Ent_{\m_\phi}(f_t)\leq e^{-\kappa_\phi t}\Ent_{\m_\phi}(f_0).
    \end{equation}
\end{lemma}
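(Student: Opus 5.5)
The plan is to prove \eqref{eq:Ent_dissipation} first for nice data and then pass to general $f_0$ by approximation, deducing \eqref{eq:Ent_decay} from \eqref{eq:lsi} by Gr\"onwall. Write $m_\phi$ for the $C^{2,\alpha}_{loc}$ density of $\m_\phi$ (Lemma~\ref{lem:coeff_reg}). Since $\Ent_{\m_\phi}$ is $1$-homogeneous and $\m_\phi(f_t)=\m_\phi(f_0)$ by invariance, we may normalize $\m_\phi(f_0)=1$. Then $\Ent_{\m_\phi}(f_t)=\int f_t\log f_t\,d\m_\phi$.

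\emph{Step 1 (regular data).} Take $f_0$ smooth and bounded with $\varepsilon\le f_0\le \varepsilon^{-1}$ and $f_0-c$ compactly supported for a constant $c$. By the maximum principle and Lemma~\ref{lem:coeff_reg}, $f_t=P_t^\phi f_0$ is a classical solution of \eqref{eq:PDE;semigroup} satisfying the same bounds. To avoid boundary terms at infinity, introduce a cutoff $\chi_R\in C_c^\infty$ with $\chi_R\equiv1$ on $B_R$ and $|\nabla\chi_R|+|\Delta\chi_R|\lesssim 1$. Compute
\[
\frac{d}{dt}\int \chi_R f_t\log f_t\,d\m_\phi=\int \chi_R(1+\log f_t)\L_\phi f_t\,d\m_\phi .
\]
Integrate by parts using the stationarity equation $\L_\phi^\ast 1=0$, i.e.\ $\int \L_\phi g\,d\m_\phi=0$ for $g\in C^2_c$. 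This uses the identity
\[
\L_\phi(\Psi(f))=\Psi'(f)\L_\phi f+\tfrac12\Psi''(f)|\nabla f|^2, \qquad \Psi(s)=s\log s .
\]
Applied to $\chi_R\Psi(f_t)$, it gives
\[
\frac{d}{dt}\int\chi_R\Psi(f_t)\,d\m_\phi=-\frac12\int\chi_R\frac{|\nabla f_t|^2}{f_t}\,d\m_\phi-\int\bigl(\Psi(f_t)\L_\phi\chi_R+\nabla\chi_R\cdot\nabla\Psi(f_t)\bigr)\,d\m_\phi
\]
plus a term of the form $\int \Psi'(f_t)(\ldots)$. The more robust route is to rewrite everything as $\int \L_\phi(\chi_R\Psi(f))\,d\m_\phi=0$ minus error terms involving derivatives of $\chi_R$. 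Integrating in time and letting $R\to\infty$ then yields \eqref{eq:Ent_dissipation}, by Fatou applied to the Fisher term and dominated convergence for $\int\chi_R\Psi(f_t)$ since $\Psi(f_t)$ is bounded.

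\emph{Main obstacle.} The hard part is controlling the cutoff error terms. The drift $b+\nabla\phi$ may grow at infinity, so $\L_\phi\chi_R$ is not uniformly small. I would handle this by replacing $\chi_R$ with the probabilistic formulation: use It\^o's formula for $\Psi(f_{t-s}(X^\phi_s))$ with $X_0^\phi\sim\m_\phi$ stationary, localized by exit times of $B_R$. Nonexplosion from Assumption~\ref{ass:2}(i) makes the localization vanish. Boundedness of $\Psi(f)$ and $\Psi'(f)$ makes the martingale terms true martingales up to the stopping time, and the Fisher integrand is nonnegative, so it passes to the limit by monotone convergence. This gives exactly the inequality; equality is not needed.

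\emph{Step 2 (general data and decay).} For general $f_0\ge0$ with finite entropy, approximate by regular $f_0^n\to f_0$ in $L^1(\m_\phi)$ with $\Ent(f_0^n)\to\Ent(f_0)$, for example by truncation from above and below followed by mollification. By $L^1$-contraction, $f^n_t\to f_t$ in $L^1$, and $\nabla f^n_t\to\nabla f_t$ locally uniformly by interior parabolic estimates. Lower semicontinuity of entropy and Fatou for the Fisher term give \eqref{eq:Ent_dissipation}. Finally, combining \eqref{eq:lsi}, extended from $C_c^\infty$ to $f_s$ by density, with the integral inequality and applying Gr\"onwall on $[s,t]$ (the inequality holds from any starting time by the semigroup property) yields $\Ent_{\m_\phi}(f_t)\le e^{-\kappa_\phi t}\Ent_{\m_\phi}(f_0)$.
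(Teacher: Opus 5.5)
Your proposal is correct. Its overall architecture matches the paper's: an identity for regular data, approximation with lower semicontinuity, then \eqref{eq:lsi} plus Gr\"onwall. The core computation, however, is done differently.

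\textbf{The paper's route.} It proves de Bruijn's identity for $g\in C_c^\infty+\eps$ by splitting $\L_\phi$ into its $\m_\phi$-symmetric and antisymmetric parts. This uses the explicit adjoint $\L_\phi^\ast$, which involves $\nabla\log\m_\phi$. It then shows the antisymmetric part is entropy-neutral and integrates the symmetric part by parts on $\R^d$. The boundary terms at infinity are not discussed. The passage to general $f_0$ cites lower semicontinuity of relative entropy under narrow convergence and of Fisher information in weak $L^1$ \cite{AGS14Inventiones}.

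\textbf{Your route.} You use only invariance of $\m_\phi$ and the chain rule $\L_\phi\Psi(f)=\Psi'(f)\L_\phi f+\frac12\Psi''(f)|\nabla f|^2$. You implement this through It\^o's formula for $\Psi(f_{t-s}(X_s^\phi))$ under the stationary law, localized by exit times. Nonexplosion and the two-sided bounds on the data then remove the localization, the Fisher term passes by monotone convergence, and you actually get equality for regular data. This never touches the density of $\m_\phi$ or the integrability of the drift at infinity, which is exactly where the paper's integration by parts is only formal. Likewise, your Fatou argument with locally uniform convergence of $\nabla f_s^n$ replaces the \cite{AGS14Inventiones} citation with elementary interior parabolic estimates.

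\textbf{What each buys.} The paper's decomposition makes it transparent why nonreversibility is harmless: the computation reduces to the textbook reversible one. Yours is more robust under Assumption~\ref{ass:2}(i) alone.

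\textbf{Small clean-ups.}
\begin{itemize}
\item Drop the analytic cutoff paragraph in Step 1. There is no extra term of the form $\int\Psi'(f_t)(\ldots)$. The only error terms are $\int\Psi(f_t)\L_\phi\chi_R\,d\m_\phi$ and $\int\nabla\chi_R\cdot\nabla\Psi(f_t)\,d\m_\phi$. The first needs $|b+\nabla\phi|\in L^1(\m_\phi)$, which Assumption~\ref{ass:2}(i) does not provide, so pivoting to the probabilistic argument is the right call.
\item Upgrading $L^1(\m_\phi)$ convergence to local uniform convergence uses that the density of $\m_\phi$ is locally bounded below. This follows from Harnack's inequality for the stationary equation.
\item Apply It\^o's formula on $[0,t-\delta]$ and let $\delta\to0$, unless you invoke regularity up to the initial time.
\item For the decay, you only have an integral inequality, so run Gr\"onwall through monotonicity. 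From $E(t)\le E(s)-\kappa_\phi\int_s^t E(r)\,dr$ and $E(r)\ge E(t)$ you get $E(t)\le E(s)/(1+\kappa_\phi(t-s))$. Iterating over a uniform partition of $[0,t]$ gives $E(t)\le e^{-\kappa_\phi t}E(0)$.
\end{itemize}
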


\begin{proof}
    The entropy decay \eqref{eq:Ent_decay} immediately follows from \eqref{eq:Ent_dissipation}, the log-Sobolev inequality \eqref{eq:lsi}, and the Gronwall inequality.

    Let us first assume de Bruijn's identity
    \begin{equation}\label{eq:deBruijn}
        \frac{d}{dt}\Ent_{\m_\phi}(P_t^\phi g) = - \frac12\int_{\R^d}\frac{|\nabla P_t^\phi g|^2}{P_s^\phi g}\;d\m_\phi  \text{ for every } g\in C_c^\infty(\R^d;\R_+)
    \end{equation}
    holds for our diffusion semigroup; we will verify it at the end of the proof. By integrating~\eqref{eq:deBruijn}, we obtain
    \begin{equation}\label{eq:Ent_dissipation;ineq}
        \Ent_{\m_\phi}(P_t^\phi g)= \Ent_{\m_\phi}(g)-\frac12\int_0^t \int_{\R^d}\frac{|\nabla P_s^\phi g|^2}{P_s^\phi g}\;d\m_\phi\;ds \text{ for every } g\in C_c^\infty(\R^d;\R_+).
    \end{equation}
    Now, let $g_n\in C_c^\infty(\R^d;\R_+)$ be a sequence such that 
    \[\lim_{n\rightarrow\infty}\|g_n-f_0\|_{L^1(\m_\phi)},\quad \lim_{n\rightarrow\infty}\Ent_{\m_\phi}(g_n)=\Ent_{\m_\phi}(f_0), \text{ and } \int \int P_t^\phi g_n\;d\m_\phi=1;\]
    the first two are possible by standard truncation and mollification, whereas the normalization does not cause any issues due to the one-homogeneity of $\Ent_{\m_\phi}$. As $P_t^\phi$ is an $L^1(\m_\phi)$-contraction, $P_t^\phi g_n\xrightarrow[]{n\rightarrow\infty} f_t$ in $L^1(\m_\phi)$. Due to nonnegativity, note that this implies that $(P_t^\phi g_n) \m_\phi$ converges to $f_t \m_\phi$ in total variation, hence narrowly.
    
    Then we may obtain the inequality \eqref{eq:Ent_dissipation} by lower semicontinuity of $\Ent_{\m_\phi}$ and the Fisher information. More precisely, relative entropy $(\mu,\nu)\mapsto \kl(\mu||\nu):=\Ent_{\nu}(d\mu/d\nu)$ is lower semicontinuous with respect to the narrow convergence, thus
    \begin{equation}
    \Ent_{\m_\phi}(f_t)=\kl(f_t\m_\phi||\m_\phi)\leq\liminf_{\eps\downarrow 0}\kl((P_t^\phi g_n) \m_\phi||\m_\phi)\leq\liminf_{\eps\downarrow 0}\Ent_{\m_\phi}(P_t^\phi g_n),
    \end{equation}
    whereas the Fisher information lower semicontinuous with respect to the weak topology of $L^1(\m_\phi)$\cite[Lemma 4.10]{AGS14Inventiones}. 

    Thus, it remains to show \eqref{eq:deBruijn}. As in the proof in the reversible case (e.g. \cite[Proposition 5.2.2]{BGL14}), all the calculations are justified first for functions for functions in $C_c^\infty(\R^d;\R_+)+\eps$ 
    for some $\eps>0$ to avoid issues with the logarithm, then extended by $\eps\downarrow 0$. From direct calculations, we can readily verify that the $L^2(\m_\phi)$-adjoint $\L_\phi^\ast$ of $\L_\phi$ is given by
    \[\L_\phi^\ast=\frac12\Delta + (\nabla\log\m_\phi+b+\nabla\phi)\cdot\nabla.\]
    Accordingly, define the symmetric and antisymmetric part
    \begin{align*}
        \L_\phi^{sym}&:=\frac12(\L_\phi+\L_\phi^\ast)=\frac12\Delta+\frac12\nabla\log\m_\phi\cdot\nabla
        \\ \L_\phi^{anti}&:= \frac12(\L_\phi-\L_\phi^\ast)=-(b+\nabla\phi+\frac12\nabla \log\m_\phi)\cdot\nabla.
    \end{align*}
    As $\L_\phi^{anti}$ is a differential operator of first order, $\L_\phi^{anti}\log u=\L_\phi^{anti}u/u$, and thus
    \begin{align*}
        \int_{\R^d} \log u\L_{\phi}^{anti} u\;d\m_\phi&=-\int_{\R^d} u \L_\phi^{anti}\log u \;d\m_\phi = -\int_{\R^d} \frac{u\L_\phi^{anti}u}{u}\;d\m_\phi 
        \\&= -\int_{\R^d} \L_\phi^{anti} u\;d\m_\phi= \int_{\R^d} u\L_\phi^{anti} 1\;d\m_\phi =0.
    \end{align*}
    Thus, the entropy dissipation in the nonreversible case reduces to the well known calculations in the reversible case
    \begin{align*}
        \frac{d}{dt}\Ent_{\m_\phi}(P_t g)&=\int_{\R^d} (1+\log P_t g)\L_\phi P_t g\;d\m_\phi = \int_{\R^d} \log P_t g \L_\phi P_t g d\m_\phi 
        \\&= \int_{\R^d} \log P_t g\L_\phi^{sym} P_t g\;d\m_\phi=-\frac12\int_{\R^d}\frac{|\nabla P_t g|^2}{P_t g}\;d\m_\phi.
    \end{align*}
    This completes the proof.
\end{proof}

\subsection{Doob's \texorpdfstring{$h$}{h}-transform}\label{ssec:C1;subprob}

We now relate the semigroup solution from Definition~\ref{def:semigroup}, with the solution of the linear parabolic equation~\eqref{eq:parabolic}.

The correspondence is formally provided by Doob's $h$-transform, which can be used to the zeroth order term of the elliptic operator using the ground state. As we already saw in Remark~\ref{rmk:phi_unique}  \eqref{eq:Doobh}, Doob's $h$-transform is defined as follows: given an elliptic operator $L$ and a strictly positive regular function $h$, its $h$-transform $L^h$ is defined by
\[L^h u:=\frac{1}{h}L (hu).\]
Applying Doob's $h$-transform to $L=\L-V$ and $h=\bar\varrho$, $L^h=\L_\phi-\lambda$. Thus, given the ground state $\bar\varrho$, we can understand the behavior of $\varrho$ satisfying $\partial_t\varrho=(\L-V)\varrho$ through $f$ that satisfies $\partial_t f=\L_\phi f$, which is well-behaved (Lemma~\ref{lem:ent_dissipation}).

The challenge in making this correspondence rigorous for our setting lies in the (surprising) subtlety with uniqueness of the solution of the parabolic equation \eqref{eq:parabolic} with vague initial trace, which is well documented in \cite[Chapter 9]{BKRS2022FPK}. In order to rigorously establish a bijection between the semigroup solution $f$, which is unique, and $\varrho$, we need to single out a particular solution $\varrho$. We will use the notion of subprobability solution, and establish the correspondence in Proposition~\ref{prop:subprob_sol}.

We state the existence and uniqueness of the subprobability solution of the Fokker-Planck equation with nonnegative zeroth order coefficient due to Bogachev, Krylov, R\"{o}ckner and Shaposhnikov~\cite[Theorem 6.6.2, Theorem 9.4.3.]{BKRS2022FPK}, simplified to our setting. We emphasize that we are borrowing only a small fraction of the power of their results, which is applicable to a much more general setting, in order to resolve the uniqueness issue with vague initial trace.
\begin{proposition}\label{prop:subprob_uniqueness}
    Fix $T>0$, and let $\varrho:(0,T)\times\R^d\rightarrow\R_+$ with $\int_{\R^d}\varrho(t,x)\;dx \leq 1$ for all $t\in (0,T)$. We say $\varrho$ is a subprobability solution of \eqref{eq:parabolic} on $(0,T)\times\R^d$ with initial trace $\nu\in\P(\R^d)$ if it is a classical solution on $(0,T)\times\R^d$ and
    \begin{listi}
        \item (Vague initial trace) $\int_{\R^d} \psi(x)\varrho(t,x)\;dx\xrightarrow[]{t\rightarrow 0} \int_{\R^d} \psi(x)\;d\nu(x)$ \text{ for each } $\psi\in C_c(\R^d)$.
        \item (Appropriate loss of mass) for almost every $t\in(0,T)$:
        \begin{equation}\label{eq:massloss}
            \int_{\R^d} \varrho(t,x)\;dx\leq \nu(\R^d)-\int_0^t \int_{\R^d} V(x)\varrho(s,x)\;dx\;ds.
        \end{equation}
    \end{listi}
    Under Assumption~\ref{ass:1}, the subprobability solution of \eqref{eq:parabolic} with initial trace $\nu\in\P(\R^d)$ exists and is unique.
\end{proposition}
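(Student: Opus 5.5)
The plan is to prove existence by an exhaustion/penalization construction and uniqueness by a minimality-plus-mass-balance argument. This is essentially the strategy of \cite[Theorem 6.6.2, Theorem 9.4.3]{BKRS2022FPK}, simplified to our setting.

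For \emph{existence}, let $B_n$ be the ball of radius $n$. On $(0,T)\times B_n$ I solve \eqref{eq:parabolic} with Dirichlet boundary conditions and initial datum $\nu_n:=\zeta_n\ast(\one_{B_{n-1}}\nu)$, where $\zeta_n$ is a mollifier with support of radius less than $1$ so that $\supp\nu_n\subset B_n$. Since $b\in C^{1,\alpha}_{loc}$ and $V\in C^{0,\alpha}_{loc}$, classical Schauder theory on bounded domains gives a unique classical solution $\varrho_n\ge0$. By the maximum principle the $\varrho_n$ are nondecreasing in $n$ once the data are arranged to be monotone; for instance, replace mollification by the Dirichlet heat kernel at a small time, or take the limit of the Dirichlet solutions directly with measure data. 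Integrating the equation over $B_n$ and using $\partial_\nu\varrho_n\le0$ on $\partial B_n$, I get
\[\int_{B_n}\varrho_n(t)\,dx+\int_0^t\!\!\int_{B_n}V\varrho_n\,dx\,ds\le \nu_n(\R^d)\le 1 .\]
Interior Schauder and Harnack estimates on compact subsets of $(0,T)\times\R^d$ then give local $C^{2,\alpha}$ bounds. So $\varrho:=\lim_n\varrho_n$ is a classical solution, and Fatou's lemma transfers the mass inequality, which gives \eqref{eq:massloss}. For the vague trace, I test against $\psi\in C_c$. For a single $n$, $\int\psi\varrho_n(t)\,dx\to\int\psi\,d\nu_n$ as $t\to0$. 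The lower bound $\liminf_{t\to0}\int\psi\varrho(t)\,dx\ge\int\psi\,d\nu$ for $\psi\ge0$ follows from monotonicity. The upper bound I plan to get from total mass: $\int\varrho(t)\,dx\le1$, and on compacts the $\varrho_n$ converge locally uniformly in time. Since the linear growth of $b$ guarantees nonexplosion of the associated diffusion, no mass escapes in short time; combining this with the mass bound yields $\limsup_{t\to0}\int\psi\varrho(t)\,dx\le\int\psi\,d\nu$.

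For \emph{uniqueness}, let $\tilde\varrho$ be any subprobability solution with trace $\nu$. The first step is to show $\varrho\le\tilde\varrho$, i.e.\ minimality of the constructed solution. For this I compare $\varrho_n$ with $\tilde\varrho$ on $B_n$ via the maximum principle, using $\tilde\varrho\ge0$ on $\partial B_n$ and a vague-trace comparison at $t=0$. The comparison at $t=0$ is done at a small positive time $s$, with $\tilde\varrho(s)$ as the initial datum, and then letting $s\to0$ using the vague convergence. The second step handles the difference $\eta=\tilde\varrho-\varrho\ge0$, which satisfies
\[\int\eta(t)\,dx=\int\tilde\varrho(t)\,dx-\int\varrho(t)\,dx .\]
Here I need the reverse inequality for the minimal solution, namely $\int\varrho(t)\,dx+\int_0^t\!\int V\varrho=\nu(\R^d)$. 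This equality comes from nonexplosion under Assumption~\ref{ass:1}(i), via a Lyapunov function such as $\log(1+|x|^2)$ with $\L\log(1+|x|^2)\le C$. Combining it with \eqref{eq:massloss} for $\tilde\varrho$ and with $V\ge0$ gives
\[\int\eta(t)\,dx+\int_0^t\!\!\int V\eta\,dx\,ds\le0 ,\]
so $\eta\equiv0$ for a.e.\ $t$, hence for all $t$ by continuity.

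The main obstacle is the mass conservation identity for the minimal solution together with the small-time comparison. The identity requires a careful cutoff argument. I test the equation against $\chi_R(x)=\chi(\log(1+|x|^2)/R)$ and show that the error terms $\int\varrho\,|\L\chi_R|$ vanish as $R\to\infty$ uniformly on $[0,T]$. Linear growth of $b$ makes $|\L\chi_R|\lesssim1/R$, and this, together with $\int\varrho\le1$, suffices. The same estimate also settles the vague-trace upper bound in the existence part.
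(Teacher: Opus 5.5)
Your argument is correct, but it does not follow the paper's route. The paper gives no proof of its own: it invokes \cite[Theorem 6.6.2, Theorem 9.4.3]{BKRS2022FPK}. In Remark~\ref{rmk:subprob_uniqueness} it only checks that their hypotheses collapse in this setting:
\begin{itemize}
\item H\"{o}lder coefficients make the local integrability conditions trivial;
\item linear growth turns $|b|/(1+|x|)\in L^1(\varrho\,dx\,dt)$ into finiteness of the space-time mass;
\item $V\in L^1(\varrho\,dx\,dt)$ is read off from \eqref{eq:massloss}.
\end{itemize}
You instead reprove the result from scratch. Dirichlet exhaustion produces a minimal solution, comparison on balls gives $\varrho\le\tilde\varrho$ for any competitor, and the logarithmic cutoff $\chi_R=\chi(\log(1+|x|^2)/R)$ with $|\L\chi_R|\lesssim 1/R$ gives exact mass balance. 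The citation is short and covers far more general coefficients. Your version is self-contained and shows where linear growth enters: the uniform bound on $\L\chi_R$ is the concrete form of the $|b|/(1+|x|)$ condition.

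It also shows slightly more than you claim, and your ``main obstacle'' is easier than you suggest. The cutoff computation never uses minimality, only $\int\varrho(t)\,dx\le1$ and the vague trace. So every nonnegative classical solution in the class satisfies \eqref{eq:massloss} with equality. Minimality is needed only for the ordering $\varrho\le\tilde\varrho$, which turns equal mass balance into equality of solutions. This is the same mechanism as the cutoff argument the paper uses in the proof of Proposition~\ref{prop:subprob_sol}.

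Two details should be fixed.
\begin{itemize}
\item \emph{Commit to measure data.} Use $\one_{B_{n-1}}\nu$ propagated by the Dirichlet Green function $G_n$ of $B_n$. Mollified data $\zeta_n\ast(\one_{B_{n-1}}\nu)$ are neither monotone in $n$ nor directly comparable with $\tilde\varrho$. With $G_n$, monotonicity is $G_n\le G_{n+1}$ on $B_n$. For minimality, take $0\le\chi\le1$ in $C_c(B_n)$; the cutoff avoids the corner incompatibility at $\{s\}\times\partial B_n$. Then $\tilde\varrho(t,x)\ge\int G_n(t-s,x,y)\chi(y)\tilde\varrho(s,y)\,dy$. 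Let $s\to0$ as in Lemma~\ref{lem:vague_test}, then let $\chi\uparrow\one_{B_n}$.
\item \emph{Upper vague bound.} In the existence part this needs neither nonexplosion nor the mass identity. Using the identity would in fact be circular, since its proof uses the trace. For $0\le\psi\le1$ in $C_c(\R^d)$ you have $\int\psi\varrho(t)\,dx\le 1-\int(1-\psi)\varrho(t)\,dx$. Extend the lower bound by monotone approximation to the bounded continuous function $1-\psi\ge0$. Since $\nu(\R^d)=1$, this gives $\limsup_{t\to0}\int\psi\varrho(t)\,dx\le\int\psi\,d\nu$.
\end{itemize}
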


\begin{remark}\label{rmk:subprob_uniqueness}
    We detail a few simplicifications we have made from the much more general result of Bogachev, Krylov, R\"{o}ckner and Shaposhnikov. Firstly, the H\"{o}lder regularity of $b,V$ makes local integrability conditions trivial, and moreover allows us to formulate the definition in terms of classical solutions rather than weak solutions. Secondly, constant diffusion coefficient trivially satisfies all assumptions. Thirdly, the linear growth condition reduces integrability condition $|b|/(1+|x|)\in L^1(\rho(x,t)\;dxdt,\R^d\times(0,T))$ of~\cite[Theorem 9.4.3.]{BKRS2022FPK} to
    \begin{equation}\label{eq:totalmass_bd}
        \int_0^T \varrho(t,x)\;dx <+\infty.
    \end{equation}
    However, this follows from \eqref{eq:massloss}, as $V\geq 0$. Finally, if \eqref{eq:massloss} holds with the the spacetime integral of $V\rho$ being possibly infinite, the integral must be finite, as the left-hand side is nonnegative and $\nu(\R^d)$ is finite; thus the global integrability condition $V\in L^1(\rho(t,x)dxdt,\R^d\times(0,T))$ in the definition follows.
\end{remark}

Let $u$ be a solution of \eqref{eq:hje} and $(\lambda,\phi)$ of \eqref{eq:ep}. Let $v=u-\lambda t-\phi$, $w = u-\phi$. As $\nabla w = \nabla v$, we have
\begin{align*}
    \partial_t v=\partial_t u -\lambda
    =&\frac12\Delta u - \frac12|\nabla u|^2-b\cdot\nabla u +V+\nabla\cdot b
    \\&-\frac12\Delta \phi+\frac12|\nabla\phi|^2+b\cdot\nabla\phi - V-\nabla\cdot b
    \\=&\frac12\Delta v - \frac12 (\nabla u+\nabla\phi)\cdot\nabla v - b\cdot\nabla v
    \\=&\frac12\Delta v - \frac12 (\nabla w+2\nabla\phi)\cdot\nabla v - b\cdot\nabla v
    \\=&\frac12\Delta v -\frac12|\nabla w|^2-(b+\nabla\phi)\cdot\nabla v.
\end{align*}
Thus $f_t(x):= e^{-v}$ satisfies precisely the equation \eqref{eq:PDE;semigroup}. Provided $(\lambda,\phi)$, or equivalently $\bar\varrho$, this suggests a one-to-one correspondence between the semigroup $P_t^\phi$ and the solution of \eqref{eq:parabolic}. We make this correspondence precise in the following proposition.

\begin{proposition}\label{prop:subprob_sol}
    Let $\varrho_0\in L^1(\R^d)$ be a nonnegative function with mass at most one, additionally satisfying $f_0:=\varrho_0 e^{\phi}\in L^1(\m_\phi)$. Then the unique subprobability solution of the Fokker-Planck equation \eqref{eq:parabolic} with vague initial trace $\varrho_0(x)\;dx$ is given by
    \begin{equation}\label{eq:subprobsol}
        \varrho_t(x):=f_t(x)e^{-\lambda t-\phi(x)}
    \end{equation}
    where $f_t$ is the semigroup solution of \eqref{eq:PDE;semigroup} with initial data $f_0$.
\end{proposition}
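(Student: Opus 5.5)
We plan to show directly that $\varrho_t:=f_te^{-\lambda t-\phi}$ satisfies every requirement of a subprobability solution in Proposition~\ref{prop:subprob_uniqueness}. The uniqueness part of that proposition, under Assumption~\ref{ass:1}, then identifies it as \emph{the} subprobability solution. There are four properties to check: classical regularity, vague initial trace, the mass-loss inequality \eqref{eq:massloss}, and the bound $\int\varrho_t\,dx\le 1$. The last follows from \eqref{eq:massloss} since $V\ge 0$ and $\int\varrho_0\le 1$.

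\emph{Regularity.} By Lemma~\ref{lem:coeff_reg} the drift $b+\nabla\phi$ lies in $C^{1,\alpha}_{loc}$. Hence the semigroup solution $f_t=P_t^\phi f_0$ is a classical solution of \eqref{eq:PDE;semigroup} for $t>0$, by interior parabolic Schauder estimates. Reversing the computation preceding the proposition (with $v=-\log f$) shows that $\varrho$ solves \eqref{eq:parabolic} classically. One can also check this directly through $\frac{1}{\bar\varrho}(\L^\ast-V)(\bar\varrho f)=\L_\phi f-\lambda f$, which is just the $h$-transform identity.

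\emph{Vague initial trace.} Fix $\psi\in C_c(\R^d)$ and write the density of $\m_\phi$ as $m_\phi$, which is continuous and strictly positive. Then
\[\int\psi\varrho_t\,dx=e^{-\lambda t}\int \frac{\psi e^{-\phi}}{m_\phi}\,f_t\,d\m_\phi .\]
The weight $\psi e^{-\phi}/m_\phi$ is bounded, and $f_t\to f_0$ in $L^1(\m_\phi)$. So the right-hand side converges to $\int\psi e^{-\phi}f_0\,dx=\int\psi\varrho_0\,dx$.

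\emph{Mass loss.} This is the main obstacle, because $e^{-\phi}\,dx$ is not the invariant measure of $X^\phi$, so there is no direct conservation law. We plan a double monotone approximation.
\begin{listi}
\item Truncate and mollify $f_0$ to get $0\le f_0^n\in C_c^\infty$, increasing to $f_0$. This is possible at least up to a monotone limit.
\item For each ball $B_R$, let $f^{n,R}_t(x)=\E^x[f_0^n(X^\phi_t);\,t<\tau_R]$ be the semigroup of $X^\phi$ killed on exiting $B_R$. By uniqueness for the classical Dirichlet problem on bounded domains (maximum principle), $\varrho^{n,R}:=f^{n,R}e^{-\lambda t-\phi}$ is the Dirichlet solution of \eqref{eq:parabolic} on $B_R$ with zero boundary data.
\item Integrating \eqref{eq:parabolic} over $B_R$ gives
\[\frac{d}{dt}\int_{B_R}\varrho^{n,R}=\frac12\int_{\partial B_R}\partial_\nu\varrho^{n,R}-\int_{B_R}V\varrho^{n,R}.\]
The boundary flux is $\le 0$ because $\varrho^{n,R}\ge 0$ vanishes on $\partial B_R$. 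Hence $\int\varrho^{n,R}_t\le\int\varrho^n_0-\int_0^t\int V\varrho^{n,R}$.
\item Nonexplosiveness of $X^\phi$ (Assumption~\ref{ass:2}(i)) gives $\tau_R\uparrow\infty$. So $f^{n,R}_t\uparrow P_t^\phi f_0^n$ as $R\to\infty$, and then $\uparrow f_t$ as $n\to\infty$, by monotone convergence and the $L^1(\m_\phi)$-contractivity of $P_t^\phi$.
\end{listi}
Along both monotone limits, the left-hand side of the inequality in (iii) converges by monotone convergence. For the $V$-term, Fatou's lemma gives $\int_0^t\int V\varrho\le\liminf\int_0^t\int V\varrho^{n,R}$, and we also have $\int\varrho^n_0\le\int\varrho_0=\nu(\R^d)$. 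Together these yield \eqref{eq:massloss} for every $t$.

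The delicate points are the boundary regularity of the Dirichlet approximants and the monotone identification in step (iv). Both are handled by working with the smooth compactly supported data $f_0^n$ before passing to the limit.
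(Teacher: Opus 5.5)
Your regularity and vague-trace steps are essentially the paper's (you make explicit the division by the density of $\m_\phi$ that the paper leaves implicit). Your proof of the mass-loss inequality, however, takes a genuinely different route.

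The paper first assumes $f_0\in L^\infty(\m_\phi)$, so that $0\le\varrho_t\le e^{-\lambda t}\|f_0\|_{L^\infty(\m_\phi)}e^{-\phi}$ gives an integrable majorant, uniform on bounded time intervals. It then tests \eqref{eq:parabolic} against cutoffs $\chi_R$ and uses the linear growth of $b$ from Assumption~\ref{ass:1} to get $|\frac12\Delta\chi_R+b\cdot\nabla\chi_R|\le C\one_{|x|\ge R}$. Dominated convergence then gives $\int\varrho_t\,dx=\int\varrho_0\,dx-\int_0^t\int V\varrho_s\,dx\,ds$ with equality. Boundedness is finally removed through $f_0\wedge n$ and monotone convergence.

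You instead localize by killing $X^\phi$ on exiting $B_R$, obtain the inequality from the sign of the Dirichlet boundary flux, and delocalize using nonexplosiveness. This is the classical exhaustion (minimal-solution) argument.
\begin{itemize}
\item Your route never uses the growth of $b$. It does require regularity of the Dirichlet problem up to the lateral boundary for $t>0$, and it gives only the inequality. That is nonetheless all Proposition~\ref{prop:subprob_uniqueness} asks for.
\item The paper's route uses only interior regularity and is shorter. It also yields the sharper conclusion that no mass escapes to infinity.
\end{itemize}

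One small repair is needed. An increasing limit of continuous functions is lower semicontinuous, so in general you cannot choose $C_c^\infty$ data increasing a.e. to $f_0$. Either of two fixes works:
\begin{itemize}
\item Work directly with the bounded, compactly supported truncations $(f_0\wedge n)\one_{B_n}$. The killed semigroup is still smooth for $t>0$; integrate the flux identity from $\eps>0$ and let $\eps\downarrow 0$.
\item Keep mollified data converging in $L^1(\m_\phi)$ and in $L^1(e^{-\phi}dx)$. Pass to the limit along an a.e.-convergent subsequence, applying Fatou's lemma to both terms of $\int\varrho^{n,R}_t\,dx+\int_0^t\int V\varrho^{n,R}\,dx\,ds\le\int\varrho^n_0\,dx$.
\end{itemize}
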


\begin{proof}
    By the uniqueness of the subprobability solution, Theorem~\ref{prop:subprob_uniqueness}, it suffices to show that $\varrho_t$ defined as in \eqref{eq:subprobsol} is indeed a solution of \eqref{eq:parabolic} satisfying items (i)-(ii).

    That $\varrho_t$ solves \eqref{eq:parabolic} follows from the calculations preceding the proposition.

    To verify item (i), let $\psi\in C_c(\R^d)$ and note
    \begin{align*}
        \int_{\R^d} \psi(x)\varrho_t(x)\;dx = \int_{\R^d} e^{-\lambda t-\phi(x)}\psi(x) f_t(x)\;dx;
    \end{align*}
    as $e^{-\phi}\psi\in C_c(\R^d)$ is compactly supported whereas $f_t\rightarrow f_0$ in $L^1(\m_\phi)$ hence vaguely, it immediately follows that $\rho_t(x)\;dx$ converge vaguely to $\varrho_0(x)\;dx$.

    To verify item (ii), we first assume $f_0\in L^\infty(\m_\phi)$, which will be relaxed by approximation later. By Jensen's inequality, 
    \[\|P_t f_0\|_{L^\infty(\m_\phi)}\leq \|f_0\|_{L^\infty(\m_\phi)}.\]
    Thus,
    \[|\varrho_t(x)|=f_t(x)e^{-\lambda t-\phi(x)}\leq e^{-\lambda t}\|f_0\|_{L^\infty(\m_\phi)}e^{-\phi(x)},\]
    and thus $\int_{\R^d} \rho_t(x)\;dx$ is uniformly bounded on any finite time horizon $t\in[0,T]$.
    
    For each $R>0$, let $\chi_R$ be a smooth cutoff function that is 1 on $\overline B(0,R)$ and vanishes outside of $\overline B(0,2R)$. Using $\chi_R$ as a test function, we have
    \begin{align*}
        \int_{\R^d} \chi_R \varrho_t\;dx - \int_{\R^d} \chi_R\varrho_0\;dx = \int_0^t \int_{\R^d} \left(\frac12\Delta\chi_R +b\cdot\nabla\chi_R\right)\varrho_s\;dx\;ds-\int_0^t \int_{\R^d} \chi_R V\varrho_s\;dxds.
    \end{align*}
    We can pass to the limit as $R\rightarrow\infty$ by the monotone convergence all but the first term on the right-hand side; note there is no issue with infinity as $\varrho_0\in L^1(\R^d)$. Thus, it suffices to show that
    \[\int_0^t \int_{\R^d} \left(\frac12\Delta\chi_R +b\cdot\nabla\chi_R\right)\rho_s\;dx\;ds\rightarrow 0 \text{ as }R\rightarrow\infty.\]
    Indeed,
    \[|\nabla \chi_R(x)|\lesssim R^{-1}, \qquad |\Delta\chi_R(x)|\leq R^{-2} \text{ for } R\leq |x|\leq 2R,\]
    with derivatives vanish outside of the annulus, on which $|b(x)|\lesssim R$ as $R\rightarrow\infty$, we can find some constant $C>0$ such that
    \begin{align*}
        \left|\frac12\Delta\chi_R(x) +b\cdot\nabla\chi_R(x)\right|\rho_s(x)\leq C \one_{|x|\geq R}\varrho_s(x).
    \end{align*}
    Of course the right-hand side is dominated by $\varrho_s(x)$ which is integrable, 
    we can let $R\rightarrow\infty$ and apply the dominated convergence theorem to verify the claim.

    Thus we have shown that
    \begin{align*}
        \int_{\R^d} \varrho_t \;dx = \int_{\R^d} \varrho_0\;dx - \int_0^t\int_{\R^d} V\varrho_s\;dxds 
    \end{align*}
    whenever $f_0\in L^\infty(\m_\phi)$. Note each integrand is nonnegative, whereas $P_t (f_0\wedge n) \uparrow P_t f_0$ pointwise $\m_\phi$-a.e. as $n\rightarrow\infty$. Thus, for arbitrary $f_0\in L^1(\m_\phi)$ approximate by $f_0\wedge n$ and conclude by the monotone convergence theorem; as $\varrho_0\in L^1$ and $\varrho_t\geq 0$, we conclude that the limiting integrals are all finite.
\end{proof}

\subsection{Consistency with linear systems theory}\label{ssec:consistency}
We now restrict our attention to the linear coefficient case, where $b(x)=Fx$ and $h(x)=Hx$ for some $F\in\R^{d\times d}$ and $H\in\R^{k\times k}$. Under detectability of $(F,H)$, we verify that Assumption~\ref{ass:2} holds, and establish facts about the invariant measure and speed of convergence of the covariance of $X_t^\phi$, which will be essential for our quantitative stability results in Section~\ref{sec:asymp}.

We verify that detectability \eqref{ass:detectable} implies Assumption~\ref{ass:2}.
\begin{lemma}\label{lem:detectability}
    Suppose $(F,H)$ satisfy Assumption~\ref{ass:detectable}. Then the solution of the ergodic problem \eqref{eq:ep} with $b(x)=\sF x$ and $f(x)=|\sH x|_{\Gamma}^2+\tr\sF$ is given by 
    \begin{equation}\label{def:lambda,phi;detectable}
        \lambda=\frac12\tr R_\phi+\tr\sF, \qquad \phi(x)=\frac12x^T R_\phi x
    \end{equation}
    where $R_\phi\in\R^{d\times d}_{++}$ is the solution of the matrix Riccati equation
    \begin{equation}\label{def:Riccati:P}
    \sF^TR_\phi+R_\phi\sF+R_\phi^2-\sH^T\Gamma^{-1}\sH   =0.
    \end{equation}
    Moreover, 
    \begin{equation}\label{def:alpha_phi}
        \alpha_\phi:=-\alpha(-F-R_\phi)>0,
    \end{equation}
    hence the diffusion~\eqref{def:Xtphi} is the Ornstein-Uhlenbeck process
    \begin{equation}\label{eq:dXphi;linear}
        dX_t^\phi = -(F+R_\phi)X_t^\phi\;dt + dB_t,
    \end{equation}
    with $\law X_t^\phi = N(0,Q_t)$, where the covariance $Q_t$ is defined by
    \begin{equation}\label{eq:Qt}
        Q_t=\int_0^t e^{-(\sF+R_\phi)s}e^{-(\sF+R_\phi)^T s}\;ds.
    \end{equation}
    In particular, $Q_\infty:=\lim_{t\rightarrow\infty}Q_t$ is well-defined, and $\phi$ satisfies Assumptions~\ref{ass:1}-\ref{ass:2}, with the invariant measure $\m_\phi=N(0,Q_\infty)$, and satisfies
    \begin{equation}\label{eq:Q_inf<Sigma_phi}
        \Sigma_\phi := R_\phi^{-1}\succeq Q_\infty.
    \end{equation}
    Finally, for each $\eps\in(0,\alpha_\phi)$, there exists $C(\eps,M)>0$ such that
    \begin{equation}\label{eq:Q_infty-Q_t}
        \|Q_\infty-Q_t\|\leq C(\eps,M) e^{-2(\alpha_\phi-\eps) t}\|Q_\infty\|.
    \end{equation}
\end{lemma}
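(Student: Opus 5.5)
The plan is to avoid solving the Riccati equation \eqref{def:Riccati:P} directly and instead obtain $R_\phi$ by inverting the stationary Kalman--Bucy covariance. Under Assumption~\ref{ass:detectable}, and since the signal noise is nondegenerate (hence $(F,I)$ is controllable), Wonham's theorem~\cite{Wonham68riccati} gives a unique $C_\infty\in\R^{d\times d}_{++}$ solving the filter algebraic Riccati equation
\[
FC_\infty+C_\infty F^T+I-C_\infty H^T\Gamma^{-1}HC_\infty=0 ,
\]
with $F-C_\infty H^T\Gamma^{-1}H$ Hurwitz. This is the stationary version of \eqref{eq:Kalman-Bucy}. I will set $R_\phi:=C_\infty^{-1}\succ 0$ and $\Sigma_\phi=C_\infty$. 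Multiplying the filter equation by $R_\phi$ on both sides yields exactly \eqref{def:Riccati:P}. Invoking this classical result is the step carrying the real content; everything afterwards is linear algebra.

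\textbf{The ergodic problem.} I plug $\phi(x)=\frac12x^TR_\phi x$ into \eqref{eq:ep} with $b=Fx$, using $\nabla\phi=R_\phi x$ and $\Delta\phi=\tr R_\phi$. The quadratic terms cancel after symmetrizing $x^TF^TR_\phi x$, by \eqref{def:Riccati:P}. The constant terms then give $\lambda=\frac12\tr R_\phi+\tr F$.

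\textbf{The spectral gap $\alpha_\phi>0$.} From the filter Riccati equation, $C_\infty F^T+I=-FC_\infty+C_\infty H^T\Gamma^{-1}HC_\infty$. Hence
\[
F^T+R_\phi=R_\phi(C_\infty F^T+I)=-C_\infty^{-1}\bigl(F-C_\infty H^T\Gamma^{-1}H\bigr)C_\infty .
\]
So $-(F+R_\phi)^T$ is similar to the Hurwitz Kalman observer matrix. Therefore $\alpha(-F-R_\phi)=\alpha(F-C_\infty H^T\Gamma^{-1}H)<0$.

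\textbf{The diffusion $X^\phi$.} It is the linear SDE \eqref{eq:dXphi;linear}, so it is nonexplosive. Its explicit solution gives $\law X_t^\phi=N(0,Q_t)$ with $Q_t$ as in \eqref{eq:Qt} when $X_0^\phi=0$; in general the mean $e^{-(F+R_\phi)t}x$ decays. The Hurwitz property makes the integral in \eqref{eq:Qt} converge, so $Q_\infty$ exists and solves the Lyapunov equation
\[
(F+R_\phi)Q_\infty+Q_\infty(F+R_\phi)^T=I .
\]
Consequently $N(0,Q_\infty)$ is the unique invariant measure and the process is ergodic. Because the invariant density is Gaussian, i.e. $-\log$ of it is $\|Q_\infty\|^{-1}$-strongly convex, the Bakry--\'Emery criterion gives \eqref{eq:lsi}. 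Assumption~\ref{ass:1} is immediate: $b$ is linear and $V=\frac12|Hx|_\Gamma^2\geq 0$ is smooth, while $\tr F$ is absorbed as $\nabla\cdot b$. Also $e^{-\phi}\in L^1$ since $R_\phi\succ0$.

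\textbf{The comparison \eqref{eq:Q_inf<Sigma_phi}.} Multiply \eqref{def:Riccati:P} by $\Sigma_\phi$ on both sides to get $F\Sigma_\phi+\Sigma_\phi F^T=-I+\Sigma_\phi H^T\Gamma^{-1}H\Sigma_\phi$. Then
\[
(F+R_\phi)\Sigma_\phi+\Sigma_\phi(F+R_\phi)^T=I+\Sigma_\phi H^T\Gamma^{-1}H\Sigma_\phi .
\]
Subtracting the Lyapunov equation for $Q_\infty$, the difference $D=\Sigma_\phi-Q_\infty$ solves a Lyapunov equation with stable matrix $-(F+R_\phi)$ and positive semidefinite right-hand side $\Sigma_\phi H^T\Gamma^{-1}H\Sigma_\phi$. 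Hence
\[
D=\int_0^\infty e^{-(F+R_\phi)s}\,\Sigma_\phi H^T\Gamma^{-1}H\Sigma_\phi\, e^{-(F+R_\phi)^Ts}\,ds\succeq 0 .
\]

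\textbf{The rate \eqref{eq:Q_infty-Q_t}.} Write $Q_\infty-Q_t=\int_t^\infty(\cdots)\,ds=e^{-(F+R_\phi)t}Q_\infty e^{-(F+R_\phi)^Tt}$. Then apply the standard bound $\|e^{-(F+R_\phi)t}\|\leq C_\eps e^{-(\alpha_\phi-\eps)t}$, which follows from the Jordan form, to each factor; this produces the factor $e^{-2(\alpha_\phi-\eps)t}\|Q_\infty\|$.
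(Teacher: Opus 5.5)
Your proposal is correct. From the ergodic-problem computation onward it coincides with the paper's proof: the same ansatz for $(\lambda,\phi)$, the same subtraction of Lyapunov equations giving $\Sigma_\phi-Q_\infty=\int_0^\infty e^{-(F+R_\phi)s}\Sigma_\phi H^T\Gamma^{-1}H\Sigma_\phi e^{-(F+R_\phi)^Ts}\,ds\succeq 0$, and the same factorization $Q_\infty-Q_t=e^{-(F+R_\phi)t}Q_\infty e^{-(F+R_\phi)^Tt}$.

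The real difference is how you obtain $R_\phi\succ 0$ and the stability of $-F-R_\phi$. The paper cites \cite{LanRod95} for a positive definite solution of \eqref{def:Riccati:P} directly. It then gets $\alpha(-F-R_\phi)<0$ from the identity $(-F-R_\phi)^TR_\phi+R_\phi(-F-R_\phi)=-H^T\Gamma^{-1}H-R_\phi^2\prec 0$, using $R_\phi$ itself as the Lyapunov matrix. You instead set $R_\phi:=C_\infty^{-1}$, with $C_\infty$ Wonham's stabilizing solution of the filter Riccati equation. You then transfer stability through the similarity $-(F+R_\phi)^T=C_\infty^{-1}(F-K_\infty H)C_\infty$. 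This is exactly the content of Remark~\ref{rmk:alpha_phi}, which you use as the proof rather than as an afterthought.

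What your route buys:
\begin{itemize}
\item It makes the role of detectability transparent. Detectability is the textbook hypothesis of \cite{Wonham68riccati} in its natural filter form, whereas \eqref{def:Riccati:P} is a control-type Riccati equation for the pair $(-F,I)$, in which detectability of $(F,H)$ enters less directly.
\item It identifies $\Sigma_\phi$ with the stationary Kalman--Bucy covariance from the outset.
\item It gives uniqueness of $R_\phi$ in $\R^{d\times d}_{++}$, inherited from Wonham's uniqueness.
\end{itemize}
The paper's Lyapunov step, in turn, is self-contained once existence is granted. It shows that every positive definite solution of \eqref{def:Riccati:P} is automatically stabilizing, hence unique, without reference to the observer matrix.

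Two of your side remarks fill in details the paper leaves implicit: that $\law X_t^\phi=N(0,Q_t)$ presumes $X_0^\phi=0$, and that \eqref{eq:lsi} follows from the Bakry--\'Emery criterion.
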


\begin{proof}
    The solution of \eqref{eq:ep} in this case should be the quadratic $\phi(x)=\frac12 x^T R_\phi x$, where the matrix $R_\phi$ is characterized by the matrix Riccati equation \eqref{def:Riccati:P}.
    Indeed, plugging in the Ansatz $\phi(x)=x^T R_\phi x$ to \eqref{eq:ep}, we have
    \begin{align*}
    \lambda&=-\frac12\Delta \phi(x) +\frac12 |\nabla\phi(x)|^2 + (\sF x)^T \nabla\phi(x) - \frac12|\sH x|_{\sG}^2-\tr \sF\\
    &=-\frac12\tr R_\phi +\frac12 x^T R_\phi^2 x + x^T \sF^T R_\phi x -\frac12 x \sH^T\sG^{-1}\sH x -\tr\sF.
    \end{align*}
    By considering the quadratic and constant terms, we deduce \eqref{def:Riccati:P} and that $\lambda=\frac12\tr R_\phi+\tr \sF$. Moreover, by detectability of $(F,H)$ we conclude $R_\phi\succ 0$; see for instance \cite[Theorem 9.1.2]{LanRod95}.
    
    Now consider the controlled diffusion
    \[dX_t^\phi = -(\sF X_t^\phi+ \nabla\phi(X_t^\phi))\;dt+ dW_t,\]
    which by plugging in $\phi(x)=\frac12 x^T R_\phi x$, is the Ornstein-Uhlenbeck process the form \eqref{eq:dXphi;linear}, which is explicitly solvable, with $\law X_t^\phi=N(0,Q_t)$ with $Q_t$ as defined in \eqref{eq:Qt}.
    
    Note we can reorganize \eqref{def:Riccati:P} into the form
    \begin{equation}\label{eq:Riccati:P;Lyapunov}
    (-\sF- R_\phi)^T R_\phi + R_\phi (-\sF- R_\phi)=-\sH^T\Gamma^{-1}\sH-R_\phi^2.
    \end{equation}
    Since $R_\phi\succ 0$ and the right-hand side is negative definite, we deduce $\alpha(-\sF-R_\phi)<0$ by the Lyapunov stability condition. In particular, \eqref{eq:dXphi;linear} is ergodic with the invariant measure $\m_\phi= N(0,Q_\infty)$, where $Q_\infty\in\R^{d\times d}_{++}$ is a well-defined positive definite matrix; in particular, $\m_\phi$ satisfies Assumption~\ref{ass:2}.

    Finally, to see \eqref{eq:Q_inf<Sigma_phi}, first multiply the Riccati equation \eqref{eq:Riccati:P;Lyapunov} by $\Sigma_\phi$ on each side to obtain
    \begin{equation}\label{eq:Riccati;Sigma;Lyapunov}
        \Sigma_\phi(-\sF-R_\phi)^{T} + (-\sF-R_\phi)\Sigma_\phi  = -\Sigma_\phi \sH^T\Gamma^{-1}\sH \Sigma_\phi - I.
    \end{equation}
    On the other hand, $Q_\infty$ satisfies
    \begin{equation}\label{eq:Riccati;Q_infty;Lyapunov}
        Q_\infty(-\sF-R_\phi)^{T} + (-\sF-R_\phi)Q_\infty = -I.
    \end{equation}
    Thus,
    \begin{align*}
        (\Sigma_\phi-Q_\infty)(-\sF-R_\phi)^{T} + (-\sF-R_\phi)(\Sigma_\phi-Q_\infty) = -\Sigma_\phi \sH^T\Gamma^{-1}\sH \Sigma_\phi \preceq 0.
    \end{align*}
    As $\alpha(-\sF-R_\phi)<0$, by the Lyapunov stability condition we can characterize the difference $(\Sigma_\phi-Q_\infty)$ by
    \begin{equation}\label{eq:Sig_phi-Q_infty;gap}
        \Sigma_\phi-Q_\infty=\int_0^\infty e^{-(\sF+R_\phi)t}\Sigma_\phi \sH^T\Gamma^{-1}\sH \Sigma_\phi e^{-(\sF+R_\phi)^T t}\;dt \succeq 0.
    \end{equation}

    Finally, noting
    \begin{align}\label{eq:minh q_infty minus q_t}
    Q_\infty-Q_t = \int_0^\infty e^{-(F+R_\phi)(t+s)}e^{-(F+R_\phi)^T(t+s)}\;ds = e^{-(F+R_\phi)t} Q_\infty e^{-(F+R_\phi)^T t}.
    \end{align}
    we deduce \eqref{eq:Q_infty-Q_t} by applying \eqref{eq:op-norm;Ceps} with $M=(F+R_\phi)$.
\end{proof}

\begin{remark}\label{rmk:alpha_phi}
    Denoting by $K_\infty = \Sigma_\phi H^T \Gamma^{-1}$ the steady-state Kalman gain, we note that
    \begin{equation}\label{eq:alpha_phi;K_infty}
        \alpha_\phi = -\alpha(-F-R_\phi) = -\alpha(F-K_\infty H).
    \end{equation}
    Indeed, using the matrix Riccati equation \eqref{def:Riccati:P}, one can readily verify
    \[F-K_\infty H=R_\phi^{-1}(-F-R_\phi)^{T}R_\phi,\]
    which implies \eqref{eq:alpha_phi;K_infty}.
\end{remark}

\section{Characterization of the optimal filter in the linear setting}\label{sec:filter_charac}
We now fully restrict our attention to the where the vector fields $b,h$ in the signal-observation model~\eqref{eq:signal-obs} are linear, $b(x)=Fx$ and $h(x)=Hx$ for some $F\in\R^{d\times d}, H\in\R^{k\times d}$. The main goal of this section is to characterize the optimal filter via gauge transformation~\eqref{eq:OF_gauge;summary}, which corresponds to our contribution \textbf{(C2)}: let $q_0\in\R^d$ and $\mu(dx)=e^{-w_0(x-q_0)}\;dx \in\P(\R^d)$, then the solution of the Zakai equation $\rho_t^\mu$ and optimal filter $\pi_t^\mu$~\eqref{def:optimal filter} with initial law $\mu$ are given by
\[
    \rho_t^\mu(x)= \exp\left(-w(t,x-q_t)-x\cdot p_t-r_t\right),\qquad \pi_t^\mu(dx)=\frac{\rho_t^\mu(x)\;dx}{\int_{\R^d} \rho_t^\mu(z)\;dz}, 
\]
where $(q,p,r)_{t\geq 0}$ is a solution of a suitable linear SDE; see Theorem~\ref{thm:gauge} for the precise statement. As the characterization is most naturally provided in terms of the exponent of the solution of the Zakai equation, we will make use of the negative log-Zakai equation \eqref{eq:-logZakai}, which in the linear setting is given by \eqref{eq:-logZakai;linear}
\[
    du = \left(\frac12\Delta u - \frac12|\nabla u|^2-(\sF x)\cdot\nabla u+\frac12|\sH x|_\Gamma^2+\tr\sF\right)\;dt-(\sH x)^T\sG^{-1} dY_t.
\]

To obtain the characterization, we need to establish three facts: first, that the unnormalized filter is a solution of the Zakai equation in the sense of Definition~\ref{def:Zakai;classical}, second, that such a solution is unique, and third, that the gauge transformed solution we construct by analyzing \eqref{eq:-logZakai;linear} is also a solution. Section~\ref{ssec:canonical_filter} establishes the first, and in Section~\ref{ssec:OUdiffusion;unique} we prove the uniqueness of the backward Kolmogorov equation that is critical for the second fact. Finally, Section~\ref{ssec:C2;charac} contains the main result, Theorem~\ref{thm:gauge}, which proves the third fact as well as the required uniqueness for the Zakai equation. All of the results permit the initial law $\mu\in\P(\R^d)$ to be a probability measure, and no additional regularity or moment assumptions are imposed. Moreover, detectability assumption on $(F,H)$ is required only for the uniqueness of the Zakai equation; see Remark~\ref{rmk:gauge}.

\subsection{Canonical filter solution of the Zakai equation}\label{ssec:canonical_filter}
In this section, we establish that the unnormalized filter, which we define below, is a solution of the Zakai equation in the sense of Definition~\ref{def:Zakai;classical}.

For each finite time horizon $[0,T]$, define $\bQ^\mu$ by
\begin{equation}\label{def:Qmu}
    \left.\frac{d\bQ^\mu}{d\bP^\mu}\right|_{\F_T}:=L_T \text{ where } L_t:=\exp\left(-\int_0^t (HX_s)^T \Gamma^{-1}\;dY_s+\frac12\int_0^t |H X_s|_{\Gamma}^2\;ds\right) \text{ for each }t\geq 0;
\end{equation}
more rigorously, the left-hand side at this point should be interpreted as $d\bQ^\mu_T/d\bP^\mu|_{\F_T}$ for some $T$-dependent $\bQ^\mu_T$, but the dependence can be removed 
once consistency ($\bQ^\mu_T=\bQ^\mu_{T'}|_{\F_T}$ for any $T'\geq T$) is established, and we can understand \eqref{def:Qmu} literally.

Then, by Girsanov's theorem, under $\bQ^\mu$ and filtration $(\Y_t)_{t\in[0,T]}$, $(\Gamma^{-1/2}Y_t)_{t\geq 0}$ is a standard Brownian motion, and $(X,Y)$ are independent. The unnormalized filter is then defined by
\begin{equation}\label{def:canonical_filtersol:Zakai}
        \rho_t^\mu(\psi):=\E^{\bQ^\mu}[\psi(X_t)L_t^{-1}|\Y_t] \text{ for } t\in[0,T].
\end{equation}
Due to the independence of $X,Y$ under $\bQ^\mu$, the conditional expectation \eqref{def:canonical_filtersol:Zakai} is much easier to analyze than the original form \eqref{def:optimal filter}, and leads to the derivation of the Zakai equation~\cite{Zakai69optimal} (of which \eqref{def:canonical_filtersol:Zakai} is a solution). Moreover, it is directly related to the optimal filter itself by normalization, which leads to the Kallianpur-Striebel formula~\cite{KalStr68} (see also Zakai~\cite{Zakai69optimal})
\begin{equation}\label{eq:Kallianpur-Striebel}
    \pi_t^\mu(\psi):=\E^{\bP^\mu}[\psi(X_t)|\Y_t] = \frac{\rho_t^\mu(\psi)}{\rho_t^\mu(\one)}= \frac{\E^{\bQ^\mu}[\psi(X_t)L_t^{-1}|\Y_t]}{\E^{\bQ^\mu}[L_t^{-1}|\Y_t]} \text{ for each }\psi\in C_b(\R^d).
\end{equation}
Thus, we refer to \eqref{def:canonical_filtersol:Zakai} as the canonical filter solution of the Zakai equation, to distinguish it from generic solution of the Zakai equation which, a priori, may not be associated to the optimal filter.

While $(L_t)_{t\in[0,T]}$ is automatically a local martingale under $(\F_t)_{t\in[0,T]}$, to justify Girsanov's theorem, it must be shown that it a true $\bP^\mu$-martingale; by nonnegativity, it suffices to show $\E^{\bP^\mu}[L_T]=1$. In a general setting encompassing our case~\eqref{eq:signal-obs}, this can be justified by fixing $(x_0,w^1)\in\R^d\times\Omega^1$ (in the notation of our canonical probability space in Section~\ref{sssec:filtering_eq}) then Girsanov's theorem is justified on each fiber as Novikov's condition trivially follows from continuity of $h$. As $\bP^\mu$~\eqref{def:Pmu} is a product measure, from this follows that $\E^{\bP^\mu}[L_T]=1$, hence that $(L_t)_{t\in[0,T]}$. For a more detailed argument, see Section 1.1.3 of van Handel's thesis, which also motivated our canonical setting \eqref{sssec:SigObs}. Once $(L_t)_{t\in[0,T]}$ is verified to be a true martingale, consistency of $\bQ^\mu$~\eqref{def:Qmu} follows, thus we can unambiguously refer to the unnormalized filter measure \eqref{def:canonical_filtersol:Zakai} on infinite-time horizon $[0,+\infty)$.

To verify that the canonical filter solution of the Zakai equation is a classical solution in the sense of Definition~\ref{def:Zakai;classical}, we rely on the Gaussian mixture representation of the unnormalized filter density $\rho_t$ due to Bene\v{s} and Karatzas~\cite[Theorem 5.1]{BenesKaratzas83}
\begin{equation}\label{eq:Gauss-mixture;BK}
    \rho_t^\mu(x)=\int_{\R^d} k_t(x;z)\Lambda_t(z)\mu(dz)
\end{equation}
where $k_t(x;z)$ is $\Y_t$ a random Gaussian density and $\Lambda_t$ is a likelihood ratio (see \eqref{eq:Lambda_tx} in the proof of Lemma~\ref{lem:Benes-Karatzas}). Once this formula is verified, using the rapid decay of the kernel $k_t$, the necessary regularity for Definition~\ref{def:Zakai;classical} is immediately verified. We will not reproduce the full formula here as it is complicated and not used for the current paper. Instead, we focus on justifying the formula without relying on the uniqueness for the Zakai equation; such a possibility is suggested by the authors in the beginning of their Section 5. The alternative justification is necessary for us due to our use of Definition~\ref{def:Zakai;classical}, especially as we do not require the initial law for $X_0$ to have a density.

As the proof heavily refers to specific parts of the paper of Bene\v{s} and Karatzas without reproducing them here, we delay it to the appendix, and provide only the statement of the result necessary for this paper.
\begin{lemma}\label{lem:Benes-Karatzas}
    Let $\mu\in\P(\R^d)$, and let $\rho_t^\mu$ be the unnormalized filter defined by \eqref{def:canonical_filtersol:Zakai} under $\bQ^\mu$ in \eqref{def:Qmu}.
    Then, $\rho_t^\mu$ is a classical solution of the Zakai equation with initial law $\mu\in\P(\R^d)$ in the sense of Definition~\ref{def:Zakai;classical}.
\end{lemma}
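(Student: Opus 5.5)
The plan is to prove the Gaussian mixture representation \eqref{eq:Gauss-mixture;BK} directly from the Kallianpur--Striebel definition \eqref{def:canonical_filtersol:Zakai}. The regularity and the semimartingale identity \eqref{eq:Zakai;semimartingale} are then read off from the explicit kernel, and no uniqueness result for the Zakai equation is used. The first step is to condition on $X_0=z$. Under $\bQ^\mu$, $X$ and $Y$ are independent and $X_0\sim\mu$. By disintegration, $\rho_t^\mu(\psi)=\int_{\R^d}\rho_t^{z}(\psi)\,\mu(dz)$, where $\rho_t^z(\psi):=\E^{\bQ^{\delta_z}}[\psi(X_t)L_t^{-1}|\Y_t]$. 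Because $Y$ is independent of $X$ under $\bQ$, $\rho_t^z(\psi)$ can be computed by freezing the observation path and integrating over the Ornstein--Uhlenbeck signal $dX=FX\,dt+dW^1$ started at $z$.

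Two further observations make this tractable. First, $L_t^{-1}=\exp(\int_0^t (HX_s)^T\Gamma^{-1}dY_s-\frac12\int_0^t|HX_s|_\Gamma^2ds)$ is the exponential of a linear functional of $X$ minus a quadratic one. Second, $X$ is Gaussian given $X_0=z$. Consequently the measure $\psi\mapsto\rho^z_t(\psi)$ is an explicit Gaussian in $x$, namely $\Lambda_t(z)k_t(x;z)$. Here $k_t(\cdot;z)$ is a Gaussian density whose covariance is the deterministic Kalman--Bucy covariance started from $C_0=0$, and whose mean is affine in $z$ with a $\Y_t$-measurable random part. This is exactly the computation of Bene\v{s}--Karatzas. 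I would carry it out by noting that, with $\mu=\delta_z$, the conditional law is the Kalman--Bucy filter \eqref{eq:Kalman-Bucy} with $m_0=z$ and $C_0=0$, while $\Lambda_t(z)=\exp(\int_0^t (Hm_s)^T\Gamma^{-1}dY_s-\frac12\int_0^t|Hm_s|^2_\Gamma ds)$ is the normalizing mass. The degenerate start $C_0=0$ is harmless for $t>0$ because $C_t\succ0$, by nondegeneracy of the signal noise. To justify this identification without uniqueness, I would use Itô's formula and the Girsanov density, or equivalently verify the Gaussian computation pathwise on each fiber as in the justification of $\E L_T=1$.

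Next come measurability and regularity. Since $m_t$ is affine in $z$ and linear in $Y$, $k_t(x;z)\Lambda_t(z)$ is jointly measurable and smooth in $x$. On $[\eps,T]$ it is bounded by $C\exp(-c|x-A_tz-\xi_t|^2+c'|z||\xi|+\dots)$, and the real obstacle is that $\Lambda_t(z)$ may grow like $e^{c|z|^2}$ while $\mu$ is an arbitrary probability measure. I expect this to be the main difficulty. To handle it, I would complete the square in $z$ jointly with the Gaussian in $x$. Then $k_t(x;z)\Lambda_t(z)$ is proportional to the $\bQ$-likelihood of the observations, which is bounded in $z$ for fixed $x$: it is a Gaussian in the pair $(x,z)$ with negative semidefinite quadratic form, strictly negative in $x$, and with locally bounded random coefficients. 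Integrating against $\mu$ then yields, with derivatives, continuity of $\rho,\nabla\rho,D^2\rho$ on $(0,T]\times\R^d$ by dominated convergence, uniformly over $t\in[\eps,T]$ and $x$ in compacts, almost surely with a null set independent of $x$ and $T$.

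Finally, each $z$-component satisfies the Zakai equation \eqref{eq:Zakai;semimartingale} pointwise by Itô's formula applied to the explicit Gaussian, with $m_t$ and $\Lambda_t$ semimartingales. The stochastic Fubini theorem, whose conditions are ensured by the domination above, transfers this to $\rho^\mu$. The vague initial trace follows because $k_t(\cdot;z)\to\delta_z$ and $\Lambda_t(z)\to1$ as $t\to0$, locally uniformly in $z$, so for $\psi\in C_c$ dominated convergence gives $\int\psi\rho_t^\mu\to\mu(\psi)$. Tightness is not needed here, because the test functions have compact support and the mass $\int\rho^\mu_t\,dx=\int\Lambda_t\,d\mu$ stays locally controlled.
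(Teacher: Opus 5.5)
Your proposal is correct, and its core is the paper's. You disintegrate over $X_0=z$, apply Kallianpur--Striebel on each fiber, identify the normalized part with the Kalman--Bucy filter started from $N(z,0)$ and the mass with $\Lambda_t(z)$, and then read off (i)--(iii). There are two real differences.

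\emph{The formula for $\Lambda_t(z)$.} The paper makes concrete the step you only name as ``It\^{o}'s formula and the Girsanov density''. Since $\Lambda_t(z)=d\mathbb{P}^z|_{\mathcal{Y}_t}/d\mathbb{Q}^z|_{\mathcal{Y}_t}$ is a positive $\mathbb{Q}^z$-martingale, martingale representation gives $d\Lambda_t=\Lambda_t\theta_t\cdot\Gamma^{-1/2}dY_t$. Girsanov makes $\Gamma^{-1/2}Y-\int\theta$ a $\mathbb{P}^z$-Brownian motion, and the innovations theorem makes $\Gamma^{-1/2}Y-\int\Gamma^{-1/2}Hm$ one. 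Uniqueness of the semimartingale decomposition then forces $\theta=\Gamma^{-1/2}Hm$.

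\emph{Item (iv).} The paper does not use the kernel here; it cites Bhatt--Karandikar's almost sure narrow continuity of $t\mapsto\rho^\mu_t$. You derive the initial trace from the explicit representation instead. Your route is self-contained and makes explicit the domination needed for (ii), (iii) and stochastic Fubini, which the paper leaves implicit. The paper's route is shorter and does not depend on the Gaussian structure near $t=0$.

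Two details in your write-up need adjusting.

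First, the growth $\Lambda_t(z)\sim e^{c|z|^2}$ that you single out as the main obstacle cannot occur. Write $m_s(z)=\Phi_s z+\xi_s$ with $\Phi$ deterministic. Then $\log\Lambda_t(z)=-\frac12 z^TG_tz+z^Tg_t+c_t$, where $G_t=\int_0^t\Phi_s^TH^T\Gamma^{-1}H\Phi_s\,ds\succeq 0$. Moreover $g_t$ lies in the range of $G_t$: if $G_tv=0$ then $H\Phi_sv\equiv 0$ on $[0,t]$, so $v\cdot g_t=0$. Hence $\Lambda_t(\cdot)$ is bounded for each $t>0$, and your joint completion of squares gives the uniformity on $[\varepsilon,T]$.

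Second, in (iv), local uniform convergence $\Lambda_t\to 1$ does not by itself give a dominating function for an arbitrary $\mu$. The reason is that $\sup_z\Lambda_t(z)$ need not stay bounded as $t\downarrow 0$. For $d=k=1$, $F=0$, $H=\Gamma=1$ it behaves like $e^{Y_t^2/(2t)}$, which is unbounded by the law of the iterated logarithm. The fix is the Gaussian decay of the kernel, which you already use. For $|z|\ge R_0$ and $t$ small, $\int\psi\,k_t(\cdot;z)\,dx\le C\|\psi\|_\infty e^{-c|z|^2/t}$. This beats $\Lambda_t(z)\le e^{|g_t||z|+c_t}$, since $g_t,c_t\to 0$. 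So the integrand is bounded uniformly in $z$ and in small $t$, and dominated convergence applies.
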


\subsection{Uniqueness of nonnegative solutions of OU diffusion equation}\label{ssec:OUdiffusion;unique}

In this section, we prove a short lemma establishing the uniqueness of nonnegative classical solution to the Kolmogorov backward equation with linear coefficients and vague initial trace. The ratio $\rho_t^\mu/\rho_t^{\mu_\phi}$ translated by a suitable time-dependent random vector $q_t$ satisifies such a deterministic equation, hence the uniqueness for the Zakai equation will be reduced to this lemma. As $(q_t)_{t\geq 0}$ is defined by the gauge transform itself, the proof of uniquenss for the Zakai equation is included as part of Theorem~\ref{thm:gauge} in the next section.

\begin{lemma}\label{lem:OU;unique}
    Let $M\in\R^{d\times d}$, and $\eta\in\M_+(\R^d)$ such that
    \begin{equation}\label{eq:eta;tail}
        \int_{\R^d}e^{-c|x|^2}\;d\eta(x) \text{ for some} c>0.
    \end{equation}
    Then there a unique nonnegative classical solution $f$ to the equation
    \begin{equation}\label{eq:OU;M}
        \partial_t f(t,x) = \frac12\Delta f-(M x)\cdot\nabla f(t,x) \text{ in } (0,\infty)\times\R^d
    \end{equation}
    with initial trace $\eta\in\M_+(\R^d)$ with respect to the vague convergence -- i.e.
    \begin{equation}\label{eq:init_trace;vague}
        \int_{\R^d} \psi(x) f(t,x)\;dx \xrightarrow[]{t\rightarrow 0} \int_{\R^d} \psi(x)\;d\eta(x) \text{ for each } \psi\in C_c(\R^d).
    \end{equation}
\end{lemma}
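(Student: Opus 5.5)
The plan is to reduce \eqref{eq:OU;M} to a heat equation with a time-dependent, spatially constant diffusion matrix, using a linear change of variables along the characteristics of the drift. Then I would invoke the classical Widder--Aronson theory of nonnegative solutions to identify every nonnegative solution with the Gaussian integral of $\eta$. Existence comes directly from the explicit Ornstein--Uhlenbeck kernel.

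\emph{Step 1 (explicit kernel and existence).} The Ornstein--Uhlenbeck process $dX_t=-MX_t\,dt+dB_t$ started at $x$ has law $N(e^{-Mt}x,Q^M_t)$, where $Q^M_t=\int_0^t e^{-Ms}e^{-M^Ts}\,ds$. Write $G_t(x,y)$ for this Gaussian density in $y$. I would define
\[
f(t,x)=\int_{\R^d} G_t(x,y)\,d\eta(y).
\]
\begin{itemize}
\item For small $t$ one has $Q^M_t\approx tI$. So $G_t(x,\cdot)$ decays like $\exp(-|y|^2/(2t(1+o(1))))$, locally uniformly in $x$.
\item Hence the tail condition \eqref{eq:eta;tail}, read as $\int e^{-c|x|^2}\,d\eta<\infty$, makes the integral finite and smooth in $(t,x)$ for $t<t_c$, where $t_c$ depends on $c$ and $\|M\|$.
\item Differentiation under the integral shows that $f$ solves \eqref{eq:OU;M}, since $G_t(\cdot,y)$ solves the backward equation.
\item The vague trace \eqref{eq:init_trace;vague} follows because $\int \psi(x)G_t(x,y)\,dx\to\psi(y)$ boundedly for $\psi\in C_c(\R^d)$, with domination again supplied by \eqref{eq:eta;tail}.
\end{itemize}
If the solution is required on all of $(0,\infty)$, I would extend it by the semigroup property $f(t+s)=P_s f(t)$, which needs $f(t)$ to satisfy the corresponding tail bound. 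This holds, for instance, when \eqref{eq:eta;tail} holds for every $c>0$, as in the later application. Otherwise the statement is to be read on the maximal interval where the integral converges.

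\emph{Step 2 (change of variables).} Given any nonnegative classical solution $f$, set $g(t,y):=f(t,e^{Mt}y)$. The chain rule gives $\partial_t g=\partial_t f+(Mx)\cdot\nabla f$ evaluated at $x=e^{Mt}y$, so
\[
\partial_t g=\tfrac12\tr\big(A(t)D^2_y g\big),\qquad A(t)=e^{-Mt}e^{-M^Tt}.
\]
Thus $g$ is a nonnegative solution of a constant-coefficient (in space) parabolic equation, uniformly elliptic on each bounded time interval. The map $y\mapsto e^{Mt}y$ converges to the identity uniformly on compacts as $t\to0$, and its Jacobian tends to $1$. Hence $g$ has the same vague initial trace $\eta$.

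\emph{Step 3 (uniqueness), the main obstacle.} I would apply Aronson's representation theorem for nonnegative solutions of parabolic equations (the extension of Widder's theorem). In the present constant-in-space setting, the substitution $s=\int_0^t A$ does not literally reduce the equation to the standard heat equation, since $A(t)$ is a matrix. So I would instead use Aronson's result directly, which covers bounded measurable coefficients. It states that every nonnegative solution on $(0,T)\times\R^d$ has an initial trace $\tilde\eta$ and equals $\int\Gamma(t,y;0,z)\,d\tilde\eta(z)$, with $\Gamma$ the fundamental solution, which is Gaussian here.

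Since the vague trace of $g$ is $\eta$, we get $\tilde\eta=\eta$. Hence $g$ is determined, and so is $f$, which must therefore coincide with the function of Step 1. If I prefer a self-contained argument, I would reproduce Widder's proof: first show $g(t,y)\ge\int\Gamma\,d\eta$ by a minimum principle on expanding balls with truncated initial data. Then show that the difference is a nonnegative solution with zero vague trace, and that it vanishes by a Gaussian-weighted mass argument.

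The delicate point is this last step: controlling nonnegative solutions with zero initial trace without any growth assumption. Nonnegativity is exactly what substitutes for the growth condition, via the Harnack and Gaussian lower bounds in Aronson's theory.
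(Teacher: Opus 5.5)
Your proposal is correct and takes essentially the same route as the paper. Both use the change of variables $g(t,y)=f(t,e^{Mt}y)$, which turns \eqref{eq:OU;M} into $\partial_t g=\frac12\tr\big(e^{-Mt}e^{-M^Tt}D^2 g\big)$ while preserving the vague trace, and then invoke Aronson's representation theorem for nonnegative solutions on finite strips. Your extra existence step via the explicit Ornstein--Uhlenbeck kernel is accurate, and so is your caveat that a tail bound for a single $c>0$ only guarantees existence on a finite time interval; the paper's proof does not address existence separately.
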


\begin{proof}
    The classical result due to Aronson~\cite{Aronson71add} (see also \cite[Theorem 12]{Aronson68}) states that nonnegative weak solutions to uniformly elliptic linear parabolic equations with  the initial trace of a nonnegative Borel measure satisfying \eqref{eq:eta;tail} in sense of \eqref{eq:init_trace;vague} is uniquely characterized by the convolution with the fundamental solution, once the coefficients are bounded.
    Thus, it suffices to reduce equation \eqref{eq:OU;M} to such a form.

    To this end, write $\tilde f(t,x)= f(t,e^{Mt}x)$, and once can readily verify that
    \[\partial_t \tilde f = \frac12\tr (e^{-Mt}e^{-M^T t} D^2 \tilde f) \text{ in } (0,\infty)\times\R^d.\]
    Moreover, this transformation preserves the initial trace, as
    \begin{align*}
        \int_{\R^d} \psi(x)\tilde f(t,x)\;dx= \int_{\R^d} \psi(x) f(t,e^{Mt}x)\;dx = e^{-t\tr M}\int_{\R^d} \psi(e^{-Mt}y) f(t,y)\;dy,
    \end{align*}
    and $\tilde\psi_t(x):=e^{-t\tr M}\psi(e^{-Mt}y)$ converges uniformly to $\psi(y)$ as $t\rightarrow 0$.
    Thus, such a solution $\tilde f$ is unique, and the uniqueness of $f$ follows.
\end{proof}

\begin{remark}\label{rmk:OU_unique}
    The transformation $\tilde f(t,x)=f(t,e^{Mt}x)$ is not new, and is a special case of the calculations appearing in Thiffeault~\cite{Thiffeault03} for instance. Indeed, we can replace $x\mapsto e^{Mt}x$ with the flow maps, the same argument can be extended to the case of vector fields with suitably regularity, say $C_b^1(\R^d;\R^d)$.
    
    We have restricted our attention to classical solutions for simplicity, but the same argument should apply to weak solutions in the suitable sense, to which Aronson's theorem applies, as long as the change of variables formula is justified.
\end{remark}

\subsection{Characterization of the optimal filter via gauge transform}\label{ssec:C2;charac}
We now build on the results of the previous sections to characterize the solution of the Zakai equation, hence the optimal filter, via a gauge transform that separates the stochastic, finite-dimensional from the deterministic, infinite-dimensional dynamics.

We first establish a technical lemma useful for verifying the vague initial trace condition (iv) Definition~\ref{def:Zakai;classical}.
\begin{lemma}\label{lem:vague_test}
    Let $(\eta_t)_{t> 0}$ be a family of locally finite Borel measures on $\R^d$ vaguely converging to $\eta_0\in\M_+(\R^d)$ --i.e.
    \begin{equation}\label{eq:nut;vague}
        \int_{\R^d} \psi(x)\;d\eta_t(x)\rightarrow \int_{\R^d}\psi(x)\;d\eta_0(x) \text{ as }t\rightarrow 0 \text{ for every }\psi\in C_c(\R^d).
    \end{equation}
    
    Then for any $\psi\in C_c(\R^d)$, $g\in C(\R^d\times\R^{d_1}\times\R^{d_2})$ and continuous paths $(z_t^i)_{t\geq 0}$ in $\R^{d_i}$ for $i=1,2$,
    \begin{equation}\label{eq:vague_test}
        \int_{\R^d} \psi(x+z_t^1) g(x,z_t^1,z_t^2)\;d\eta_t(x) \rightarrow \int_{\R^d} \psi(x+z_0^1) g(x,z_0^1,z_0^2) \;d\eta_0(x) \text{ as }t\rightarrow 0.
    \end{equation}
\end{lemma}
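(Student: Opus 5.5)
We reduce the statement to the vague convergence \eqref{eq:nut;vague} with a single fixed test function, plus a uniform error estimate on a fixed compact set. Set $\Psi_t(x):=\psi(x+z_t^1)g(x,z_t^1,z_t^2)$ for $t\ge 0$; each $\Psi_t$ is continuous and compactly supported. Since $z^1$ is continuous at $0$, there are $t_0>0$ and $R>0$ such that $\supp\psi\subset B(0,R)$ and $|z_t^1|\le R$ for $t\in[0,t_0]$. Then all $\Psi_t$, $t\in[0,t_0]$, are supported in the fixed compact set $K:=\overline B(0,2R)$. Fix a cutoff $\chi\in C_c(\R^d)$ with $0\le\chi\le 1$ and $\chi\equiv 1$ on $K$.

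We split
\begin{equation*}
\int \Psi_t\,d\eta_t-\int\Psi_0\,d\eta_0=\int(\Psi_t-\Psi_0)\chi\,d\eta_t+\Bigl(\int\Psi_0\,d\eta_t-\int\Psi_0\,d\eta_0\Bigr).
\end{equation*}
The second term tends to $0$ by \eqref{eq:nut;vague}, since $\Psi_0\in C_c(\R^d)$.

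For the first term we bound it by $\sup_{K}|\Psi_t-\Psi_0|\cdot\int\chi\,d\eta_t$. Applying \eqref{eq:nut;vague} to $\chi$ gives $\int\chi\,d\eta_t\to\int\chi\,d\eta_0<\infty$, so this factor is bounded for small $t$. It remains to show $\sup_K|\Psi_t-\Psi_0|\to 0$. We have $|\Psi_t-\Psi_0|\le|\psi(x+z_t^1)-\psi(x+z_0^1)|\,|g(x,z_t^1,z_t^2)|+|\psi(x+z_0^1)|\,|g(x,z_t^1,z_t^2)-g(x,z_0^1,z_0^2)|$.
\begin{itemize}
\item $\psi$ is uniformly continuous, being compactly supported.
\item $g$ is uniformly continuous and bounded on the compact set $K\times\overline B(0,R)\times\overline B(z_0^2,1)$, which contains $(x,z_t^1,z_t^2)$ for $x\in K$ and small $t$.
\item The paths $z^1,z^2$ are continuous at $0$.
\end{itemize}
Together these give the uniform convergence and conclude the proof.

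The only delicate point is that nonnegativity of $\eta_t$ is not stated; the measures are merely locally finite. If they are signed, the bound must instead use $|\eta_t|(K)$. That quantity is not controlled by vague convergence alone, so we will use that in the application the $\eta_t$ are nonnegative (densities of the Zakai solution). The bound $\bigl|\int(\Psi_t-\Psi_0)\chi\,d\eta_t\bigr|\le\sup_K|\Psi_t-\Psi_0|\int\chi\,d\eta_t$ then holds as written.
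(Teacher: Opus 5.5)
Your proof is correct and is essentially the paper's argument. It uses the same split into $\int(\Psi_t-\Psi_0)\,d\eta_t$ plus $\int\Psi_0\,d(\eta_t-\eta_0)$, uniform convergence $\Psi_t\to\Psi_0$ on a fixed compact, and a local mass bound; the only differences are that you bound $\eta_t(K)$ by testing vague convergence against a cutoff $\chi$ instead of citing the Portmanteau theorem, and that choosing $t_0,R$ before $K$ avoids the slightly circular order in which the paper picks $t_0$, $K_1$, $K_0$. Your closing caveat is harmless but overly cautious: even for signed measures, Banach--Steinhaus applied along any sequence $t_n\to 0$ (on the Banach space of continuous functions supported in a fixed compact neighbourhood of $K$) gives $\limsup_{t\to 0}|\eta_t|(K)<\infty$, though, like the paper, you may simply use nonnegativity.
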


\begin{proof}
    Let
    \[\tilde\psi_t(x):=\psi(x+z_t^1)g(x,z_t^1,z_t^2).\]
    Then
    \begin{align*}
        &\left|\int_{\R^d} \tilde\psi_t(x)\;d\eta_t(x)-\int_{\R^d} \tilde\psi_0(x)\;d\eta_0(x)\right|
        \leq \left|\int_{\R^d} (\tilde\psi_t(x)-\tilde\psi_0(x))\;d\eta_t(x)\right|+\left|\int_{\R^d} \tilde\psi_0(x)\;(d\eta_t-d\eta_0)(x)\right|.
    \end{align*}
    As $\tilde\psi_0\in C_c(\R^d)$ is a valid test function for vague convergence, the second term on the right-hand side vanishes as $t\rightarrow 0$.

    It remains to show the first term on the right-hand side vanishes. As $\eta_t$ converges vaguely to the locally finite Borel measure $\eta_0$, by the Portmanteau theorem for Radon measures~\cite[Theorem 3.2]{Resnick07heavy},
    \[\limsup_{t\rightarrow 0}\eta_t(K_0)\leq \eta_0(K_0).\]
    Thus, we can fix some $t_0>0$ such that
    \begin{equation}\label{eq:sup-nu_t}
        \sup_{t\leq t_0}\eta_t(K_{0})<+\infty.
    \end{equation}
    
    As $z_t^i$ are continuous, we can fix compact sets $K_i\subset\R^{d_i}$ for $i=1,2$ such that $(z_t^1,z_t^2)\subset K_1\times K_2$ for all $t\leq t_0$. Letting $K_0:=\supp\psi-K_1$, we see that
    \[(x,z^1,z^2)\mapsto \psi(x+z^1)g(x,z^1,z^2)\]
    is uniformly continuous on $K_0\times K_1\times K_2$, with its support in $x$ contained in $K_0$.
    \begin{align*}
        \sup_{x\in\R^d}|\tilde\psi_t(x)-\tilde\psi_0(x)|=\sup_{x\in K_0}|\tilde\psi_t(x)-\tilde\psi_0(x)|\rightarrow 0.
    \end{align*}
    Thus we conclude
    \begin{align*}
        \left|\int_{\R^d} (\tilde\psi_t(x)-\tilde\psi_0(x))\;d\eta_t(x)\right|
        \leq \sup_{x\in K_0}|\tilde\psi_t(x)-\tilde\psi_0(x)|\sup_{t\leq t_0}\eta_t(K_{0})\xrightarrow[]{t\rightarrow 0} 0.
    \end{align*}
\end{proof}

We are now ready to establish the main result of this section. 
\begin{theorem}[Gauge transformation]\label{thm:gauge}
    Suppose $(F,H)$ satisfies Assumption~\ref{ass:detectable}.
    Let $(q_t,p_t,r_t)$ be the solution of the system of SDEs
    \begin{equation}\label{eq:qpbeta}
    \begin{cases}
        dq_t&=(\sF q_t+p_t)\;dt,\\
        dp_t&=(\sH^T\sG^{-1}\sH q_t-\sF^Tp_t)\;dt-\sH^T\sG^{-1}dY_t,\\
        dr_t &= \left((\sF q_t)^Tp_t +\frac12|p_t|^2-\frac12|\sH q_t|_{\sG}^{2}\right)\;dt-p_t\;dq_t
    \end{cases}
    \end{equation}
    with deterministic initial data $p_0=0, r_0=0$ and $q_0\in\R^d$. 
    
    Let $\mu\in\P(\R^d)$. The solution of the Zakai equation in the sense of Definition~\ref{def:Zakai;classical} with initial law $\mu$ is given by
    \begin{equation}\label{eq:rho;gauge}
        \rho_t^\mu(x)=\exp\left(-w(t,x-q_t)-p_t\cdot x-r_t\right)
    \end{equation}
    where $\varrho(t,x):=e^{-w(t,x)}$ is the subprobability solution (in the sense of Proposition~\ref{prop:subprob_uniqueness}) of
    \begin{equation}\label{eq:parabolic;varrho}
        \partial_t\varrho =\frac12\Delta\varrho - \nabla\cdot((Fx)\varrho)-\frac12|Hx|_{\Gamma}^2\varrho \text{ in } (0,+\infty)\times\R^d
    \end{equation}
    with vague initial trace $\tau_{-q_0}\mu\in\P(\R^d)$.

    In particular, the optimal filter $\pi_t^\mu$ is given by
    \begin{equation}\label{eq:OF;gauge}
        \pi_t^\mu(dx)=\frac{\rho_t^\mu(x)\;dx}{\int_{\R^d}\rho_t^\mu(z)\;dz}.
    \end{equation}
\end{theorem}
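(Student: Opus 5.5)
The plan is to prove the theorem in three steps: existence via It\^o's formula applied to the gauge ansatz, uniqueness by reduction to Lemma~\ref{lem:OU;unique}, and identification with the optimal filter via Lemma~\ref{lem:Benes-Karatzas}.

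\textbf{Existence.} Let $\varrho=e^{-w}$ be the subprobability solution of \eqref{eq:parabolic;varrho} with initial trace $\tau_{-q_0}\mu$, given by Proposition~\ref{prop:subprob_uniqueness}. Since $\varrho$ is a classical solution, $w\in C^{1,2}((0,\infty)\times\R^d)$ and $w$ solves \eqref{eq:vHJE;linear}. The first step is to check that $\rho_t(x):=\exp(-w(t,x-q_t)-p_t\cdot x-r_t)$ satisfies Definition~\ref{def:Zakai;classical}.
\begin{itemize}
\item[(i)--(ii)] Measurability and $C^2$ regularity are inherited from $w$ and from the continuity of the adapted process $(q,p,r)$.
\item[(iii)] Fix $x$ and apply It\^o's formula to $t\mapsto -w(t,x-q_t)-p_t\cdot x-r_t$. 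Here $q$ has finite variation, so no It\^o correction arises from it. The only martingale term is $(Hx)^T\Gamma^{-1}dY_t$, which comes from $-x\cdot dp_t$.
\begin{itemize}
\item The drift collects $-\partial_t w$, the term $\nabla w\cdot \dot q_t$, the drift of $-x\cdot p_t$, and $-dr_t$.
\item Using \eqref{eq:vHJE;linear} at the point $x-q_t$ and \eqref{eq:qpbeta}, the aim is to show that these terms recombine into the deterministic part of \eqref{eq:-logZakai;linear} evaluated for $u=w(t,x-q_t)+p_t\cdot x+r_t$. The recombination uses $\nabla u=\nabla w+p_t$ and $Fx=F(x-q_t)+Fq_t$; the $r$-equation absorbs the $x$-independent leftovers, and the $q,p$ equations cancel the terms linear in $x$.
\item Exponentiating via It\^o then gives \eqref{eq:Zakai;semimartingale}. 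The correction $\tfrac12|Hx|_\Gamma^2\,dt$ coming from $d[Y]_t=\Gamma\,dt$ is precisely the difference between the Stratonovich and It\^o forms.
\end{itemize}
\item[(iv)] Write $\int\psi\rho_t\,dx=\int \psi(y+q_t)e^{-p_t\cdot(y+q_t)-r_t}\varrho_t(y)\,dy$. Lemma~\ref{lem:vague_test}, applied with $z^1_t=q_t$ and $z^2_t=(p_t,r_t)$, gives convergence to $\int\psi(y+q_0)\,d\tau_{-q_0}\mu(y)=\mu(\psi)$, since $p_0=0$ and $r_0=0$.
\end{itemize}
All these statements hold on a single full-measure event, namely the one on which $(q,p,r)$ is a continuous solution, in accordance with the \textbf{Convention}.

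\textbf{Uniqueness.} Let $\rho$ be any classical solution with initial law $\mu$. As reference, take the explicit positive solution $\rho^\phi_t(x):=e^{-\lambda t-\phi(x-q_t)-p_t\cdot x-r_t}$ from Lemma~\ref{lem:detectability}; it is a solution by the existence step, since $\lambda t+\phi$ solves \eqref{eq:vHJE;linear}. Define $h_t(x)=\rho_t(x)/\rho^\phi_t(x)$.
\begin{itemize}
\item Both processes carry the same multiplicative noise $(h^T\Gamma^{-1}dY_t)\,\cdot$. Hence, in the It\^o quotient rule, the martingale parts and the It\^o corrections cancel. The result should be the deterministic-coefficient equation
\[\partial_t h=\tfrac12\Delta h-\big(Fx+R_\phi(x-q_t)+p_t\big)\cdot\nabla h .\]
In particular, $t\mapsto h_t(x)$ has no martingale part and is $C^1$ in $t$.
\item Set $g(t,x)=h(t,x+q_t)$. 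By the classical chain rule, using $\dot q_t=Fq_t+p_t$, this gives
\[\partial_t g=\tfrac12\Delta g-((F+R_\phi)x)\cdot\nabla g.\]
\item The initial trace of $g$ is $e^{\phi}\tau_{-q_0}\mu$ (up to a constant), by Lemma~\ref{lem:vague_test} applied to test functions of the form $\psi e^{\pm(\cdots)}$. This measure satisfies \eqref{eq:eta;tail} with any $c>\|R_\phi\|/2$.
\item Lemma~\ref{lem:OU;unique} with $M=F+R_\phi$ therefore determines $g$ uniquely, and hence determines $\rho=\rho^\phi\,g(t,\cdot-q_t)$.
\end{itemize}
The rigorous use of detectability enters only through $\rho^\phi$ being a well-defined positive Gaussian reference.

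\textbf{Identification and main obstacle.} By Lemma~\ref{lem:Benes-Karatzas}, the canonical filter solution is a classical solution with initial law $\mu$. Uniqueness then identifies it with \eqref{eq:rho;gauge}, and the Kallianpur--Striebel formula \eqref{eq:Kallianpur-Striebel} yields \eqref{eq:OF;gauge}. I expect the main obstacle to be the uniqueness step. One must justify the pathwise quotient and translation computations for a general classical solution, where only pointwise-in-$x$ semimartingale information is available. One must also check that $g$ is a genuine nonnegative classical solution whose vague initial trace is exactly $e^{\phi}\tau_{-q_0}\mu$. This requires controlling the behaviour as $t\to0$ of the factors $e^{\pm(\cdot)}$ uniformly on compacts, which Lemma~\ref{lem:vague_test} is designed to handle.
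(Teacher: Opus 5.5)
Your proposal is correct and follows essentially the same route as the paper. Existence comes from It\^o's formula applied to $u(t,x)=w(t,x-q_t)+p_t\cdot x+r_t$. Uniqueness comes from the quotient with the Gaussian reference solution $e^{-\lambda t-\phi(x-q_t)-p_t\cdot x-r_t}$, translated by $q_t$, which reduces to Lemma~\ref{lem:OU;unique} with $M=F+R_\phi$. The identification with the optimal filter uses Lemma~\ref{lem:Benes-Karatzas}. Your explicit check that $e^{\phi}\tau_{-q_0}\mu$ satisfies the tail condition \eqref{eq:eta;tail} for any $c>\|R_\phi\|/2$ supplies a hypothesis of that lemma which the paper leaves implicit.
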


\begin{proof}
First note that $(q_t,p_t,r_t)_{t\geq 0}$ is well defined, as $(r_t)_{t\geq 0}$ is fully determined by $(q_t,p_t)_{t\geq 0}$, which satisfy a linear system of SDEs \eqref{eq:qpbeta}. Moreover, note that $(q_t)_{t\geq 0},(r_t)_{t\geq 0}$ have finite variation. 

\medskip

\emph{Step 1.} We first verify that \eqref{eq:rho;gauge} is a solution in the sense of Defintition~\ref{def:Zakai;classical}.

As $\varrho$ \eqref{eq:parabolic;varrho} is strictly positive (by the parabolic strong maximum principle) and smooth at every positive time $t>0$, $w=-\log\varrho$ is well-defined and smooth at each $t>0$.
Items (i)-(ii) are immediate from this regularity and that $(q_t,p_t,r_t)_{t\geq 0}$ is $(\Y_t)_{t\geq 0}$-adapted. Moreover, the It\^{o} formula is valid for positive time $t>0$. For Definition~\ref{def:Zakai;classical} (iii), and it suffices to show that
\begin{equation}\label{eq:u;ansatz}
    u(t,x)=w(t,x-q_t)+p_t\cdot x + r_t.
    \end{equation}
satisfies the negative log-Zakai equation~\eqref{eq:-logZakai;linear} as a pointwise continuous semimartingale on each interval $[\eps,T]$ for $0<\eps<T<+\infty$; applying the It\^{o} formula to $\rho_t^\mu=e^{-u(t,x)}$, item (iii) immediately follows.

Now we verify the claim about $u$.
\begin{align*}
    du(t,x)=\partial_t w(t,x-q_t)\;dt-\nabla w(t,x-q_t)\cdot dq_t + x^T dp_t+ dr_t.
\end{align*}
Writing $y=x-q_t$ and noting
\begin{align*}
   \nabla w(t,y)=\nabla u(t,x)-p_t, \quad \Delta w(t,y)=\Delta u(t,x),
\end{align*}
we have
\begin{align*}
    du(t,x)=&\partial_t w(t,y)\;dt-\nabla w(t,y)\cdot dq_t+ x^T dp_t+dr_t\\
    =& \left(\frac12\Delta w(t,y) - \frac12|\nabla w(t,y)|^2-(\sF y)\cdot\nabla w(t,y) + \frac12|\sH y|_\Gamma^2+\tr\sF\right)\;dt\\
    &-\nabla w(t,y)\cdot dq_t+ x^T dp_t+dr_t    \\
    =& \left(\frac12\Delta u(t,x)-\frac12|\nabla u(t,x)-p_t|^2 - (\sF (x-q_t))\cdot (\nabla u(t,x)-p_t) + \frac12|\sH(x-q_t)|_\Gamma^2 + \tr\sF\right)\;dt\\
    &-(\nabla u(t,x)-p_t)\cdot dq_t + x^T\;dp_t +\;dr_t \\
    =& \left(\frac12\Delta u(t,x)-\frac12|\nabla u(t,x)|^2-(\sF x)\cdot \nabla u(t,x)+\frac12|\sH x|_\Gamma^2+\tr\sF\right)\;dt\\
    &+ \nabla u(t,x)\cdot ((p_t+\sF q_t)\;dt - dq_t)
    +(\sF x)^Tp_t\;dt-(\sH x)^T\Gamma^{-1}\sH q_t\;dt+x^T dp_t \\
    &+ \frac12|\sH q_t|_\Gamma^2\;dt
    -\frac12|p_t|^2\;dt
    -(\sF q_t)^T p_t\;dt
    +p_t\cdot dq_t+dr_t.
\end{align*}
By \eqref{eq:qpbeta}, the terms in the last two lines above equal $-(\sH x)^T\Gamma^{-1} dY_t$.  Thus, $u$ is a pointwise continuous semimartingale solution of the equation \eqref{eq:-logZakai;linear}, with quadratic variation
\[d[u(\cdot,x)]_t = |Hx|_\Gamma^2\;dt.\]

Finally, to verify Definition~\ref{def:Zakai;classical} (iv), fix any $\psi\in C_c(\R^d)$ and note
\begin{align*}
    \int_{\R^d} \psi(x)e^{-w(t,x-q_t)-p_t\cdot x-r_t}\;dx = \int_{\R^d} \psi(x+q_t)e^{-p_t\cdot(x+q_t)-r_t} e^{-w(t,x)}\;dx.
\end{align*}
Thus, we can apply Lemma~\ref{lem:vague_test} pathwise to 
\[z_t^1=q_t,\; z_t^2=(p_t,r_t),\; g(x,q_t,(p_t,r_t))=\psi(x+q_t)e^{-p_t\cdot(x+q_t)-r_t}\] 
to conclude.
\medskip

\emph{Step 2.} We now turn to uniqueness.
    Let $\rho(t,x)$ be the solution to the Zakai equation in the sense of Definition~\ref{def:Zakai;classical} with initial trace $\mu\in\P(\R^d)$, driven by the same observations $(Y_t)_{t\geq 0}$ as $(q_t,p_t,r_t)_{t\geq 0}$. Let
    \begin{equation}\label{eq:barrho}
        \bar\rho(t,x)=\exp\left(-\lambda t-\phi(x-q_t)-p_t\cdot x-r_t\right)
    \end{equation}
    where $(\lambda,\phi)$ are as defined in Lemma~\ref{lem:detectability}, and recall that $(t,x)\mapsto \lambda t+\phi(x)$ solves the deterministic Hamilton-Jacobi equation \eqref{eq:hje}, hence $\bar\rho$ by Step 1. solves the Zakai equation. Motivated by the gauge transformation from Step 1., we show that
    \begin{equation}\label{eq:ratio+qt}
        f(t,x):=\frac{\rho(t,x+q_t)}{\bar\rho(t,x+q_t)}
    \end{equation}
    is a classical solution of the equation
    \begin{equation}\label{eq:Fokker-Planck;ratio}
        \partial_t f(t,x) =\frac12\Delta f-(F+R_\phi)x\cdot\nabla f \text{ in } (0,\infty)\times\R^d
    \end{equation}
    with initial trace $f(t,\cdot)\;dx\rightharpoonup e^{\phi(x)}\tau_{-q_0}\mu$ vaguely as $t\rightarrow 0$. 
    
    Suppose the above claim is true. Then, by applying Lemma~\ref{lem:OU;unique} with $M=F+R_\phi$, we deduce that $f$ is a unique solution of \eqref{eq:Fokker-Planck;ratio}; in particular, as the initial datum $e^{\phi(x)}\tau_{-q_0}\mu$ is deterministic, $f$ is also deterministic by uniqueness. As $f$ is simply a division by $\bar\rho$ and translation by $q_t$, this implies that the solution of the Zakai equation is unique; note that we are not invoking Proposition~\ref{prop:subprob_sol} here. Finally, \eqref{eq:OF;gauge} then follows from Lemma~\ref{lem:Benes-Karatzas}, which states that the canonical filter solution is also a solution of the Zakai equation in this sense.

    To verify the claim \eqref{eq:Fokker-Planck;ratio}, note that $\bar\rho>0$ on $\R^d$ and $\rho,\bar\rho,\gamma_{Q_\infty}$ are $C^2(\R^d)$ at any positive time $t>0$. Thus, we can apply the It\^{o} formula to $g_t:=\frac{\rho_t}{\bar\rho_t}$ to obtain
    \begin{align*}
        dg_t=\frac{d\rho_t}{\bar\rho_t}-\frac{\rho_t d\bar\rho_t}{\bar\rho_t^2}-\frac{d[\rho,\bar\rho]_t}{\bar\rho_t^2}+\frac{\rho_t d[\bar\rho]_t}{\bar\rho_t^3}.
    \end{align*}
    As $d[Y,Y]_t=\Gamma\;dt$, we have
    \begin{align*}
        d[\rho,\bar\rho]_t=\rho_t\bar\rho_t d[(Hx)^T\Gamma^{-1} Y, (Hx)^T\Gamma^{-1} Y]_t = \rho_t\bar\rho_t (Hx)^T\Gamma^{-1} \Gamma (Hx)\;dt = \rho_t\bar\rho_t|Hx|_{\Gamma}^2\;dt.
    \end{align*}
    Similarly,
    \[d[\bar\rho]_t = \bar\rho_t^2|Hx|_{\Gamma}^2\;dt.\]
    Thus, we have the exact cancellation
    \begin{align*}
        -\frac{d[\rho,\bar\rho]_t}{\bar\rho_t^2}+\frac{\rho_t d[\bar\rho]_t}{\bar\rho_t^3} = -\frac{\rho_t }{\bar\rho_t}|Hx|_{\Gamma}^2\;dt+\frac{\rho_t}{\bar\rho_t}|Hx|_{\Gamma}^2\;dt=0.
    \end{align*}
    Moreover, 
    \begin{align*}
        \frac{d\rho_t}{\bar\rho_t}-\frac{\rho_t d\bar\rho_t}{\bar\rho_t^2}=&\frac{(\frac12\Delta\rho_t-\nabla\cdot(\rho_t Fx) )\;dt+\rho_t (Hx)^T\Gamma^{-1}dY_t }{\bar\rho_t}
        \\&-\frac{\rho_t}{\bar\rho_t}\frac{(\frac12\Delta\bar\rho_t-\nabla\cdot(\bar\rho_t Fx))\;dt+\bar\rho_t (Hx)^T\Gamma^{-1}dY_t }{\bar\rho_t}
        \\=&\frac{\frac12\Delta\rho_t-\nabla\rho_t\cdot(Fx)-g_t\frac12\Delta\bar\rho_t+ g_t\nabla \bar\rho_t\cdot Fx}{\bar\rho_t}\;dt
        \\=& \frac12\frac{\Delta\rho_t-g_t\Delta\bar\rho_t}{\bar\rho_t}\;dt-\nabla g_t\cdot (Fx)\;dt.
    \end{align*}
    As
    \begin{align*}
    \Delta g_t=\nabla\cdot\left(\frac{\nabla\rho_t}{\bar\rho_t}-\frac{\rho_t\nabla\bar\rho_t}{\bar\rho_t^2}\right)=&\frac{\Delta\rho_t-g_t\Delta\bar\rho_t}{\bar\rho_t}-\frac{2\nabla\rho_t\cdot\nabla\bar\rho_t}{\bar\rho_t^2}+\frac{2\rho_t|\nabla\bar\rho_t|^2}{\bar\rho_t^3}
    \\=& \frac{\Delta\rho_t-g_t\Delta\bar\rho_t}{\bar\rho_t}-2\nabla g_t\cdot\nabla\log\bar\rho_t,
    \end{align*}
    we have
    \begin{align*}
        dg_t=\frac{d\rho_t}{\bar\rho_t}-\frac{\rho_t d\bar\rho_t}{\bar\rho_t^2}
        =\left(\frac12\Delta g_t + (\nabla\log\bar\rho_t-(Fx))\cdot\nabla g_t\right)\;dt.
    \end{align*}
    By construction of $\bar\rho_t$ from Theorem~\ref{thm:gauge},
    \[\nabla\log\bar\rho_t(x)=-\nabla\phi(x-q_t)-p_t.\]
    Thus, defining $f_t(x)=g_t(x+q_t)$ and recalling that $[q,q]_t=0$, we see that $f_t$ satisfies the deterministic equation~\eqref{eq:Fokker-Planck;ratio}:
    \begin{align*}
        df_t(x)=&dg_t(x+q_t)+dq_t\cdot\nabla g_t(x+q_t)
        \\=& \left(\frac12\Delta f_t(x)-(F(x+q_t)+p_t+\nabla\phi(x))\cdot \nabla f_t(x) + (Fq_t+p_t)\cdot\nabla f_t(x)\right)\;dt
        \\=&\left(\frac12\Delta f_t(x)-(Fx+\nabla \phi(x))\cdot\nabla f_t(x)\right) \;dt.
    \end{align*}

    Now it remains to verify that $f$ satisfies the initial condition in the vague trace sense. To this end, fix $\psi\in C_c(\R^d)$. Note
    \begin{align*}
        \int_{\R^d} \psi(x) f(t,x)\;dx = \int_{\R^d} \psi(x)\frac{\rho(t,x+q_t)}{\bar\rho(t,x+q_t)}\;dx = \int_{\R^d} \psi(x-q_t)e^{\lambda t+\phi(x-q_t)+p_t\cdot x+r_t}\rho_t(x)\;dx.
    \end{align*}
    Thus, by applying Lemma~\ref{lem:vague_test} with $\eta_t=\rho_t(x)\;dx$, $z_t^1=-q_t$, $z_t^2=(q_t,p_t,r_t)$ to conclude.
\end{proof}

We conclude this section with a couple remarks on Theorem~\ref{thm:gauge}.
\begin{remark}\label{rmk:gauge}
    Note that the detectability condition Assumption~\ref{ass:detectable} is used only in \emph{Step 2.} to establish uniqueness.
    In particular, that $\rho_t$ defined in \eqref{eq:rho;gauge} solves the Zakai equation holds for general $F,H$.

    Moreover, the choice of the initial data $p_0=0$ and $r_0=0$ is not necessary, as long as both $e^{-w_0(x-q_0)-x\cdot p_t-r_t}$, $e^{-w_0(x)}$ are integrable in $x$. 
\end{remark}

\section{Asymptotic behavior of the optimal filter}\label{sec:asymp}
Finally, we analyze the asymptotic behavior of the optimal filter under the the assumption that $(F,H)$ satisfies the detectability condition, Assumption~\ref{ass:detectable}. Section~\ref{ssec:C3;ZakaiEnt} provides a nonasymptotic exponential decay of the weighted entropy of the ratio of unnormalized filters. From the simple proof, we will see that the characterization by the gauge transformation in Theorem~\ref{thm:gauge} interacts particularly well with Doob's $h$-transform. However, this statement should not be understood as a filter stability (see Remark~\ref{rmk:entropydissipation}), for which a more involved analysis is required. To this end, Section~\ref{ssec:asymp;SDE} first characterizes the asymptotic behavior of the quantities related $(q,p,r)$ from \eqref{eq:qpbeta}. Building on this, Section~\ref{ssec:C3;filterstability} establishes the main results of this paper corresponding to our contribution \textbf{(C3)}, Theorem~\ref{thm:OF_stability}, which states asymptotic exponential stability of the optimal filter with respect to $\chi^p$-divergences (with respect to a reference optimal filter) for any finite $p\geq 1$, and Corollary~\ref{cor:TV,W2->0}, which establish filter stability in total variation and the $2$-Wasserstein distance. In Remark~\ref{rmk:OF_stability;comparison}, we compare our results with previously known filter stability results in similar settings.

We leave a few remarks on the assumptions we impose on the initial law $\mu$. As the main quantitative statements are formulated in terms of weighted entropy and $\chi^p$-divergence (filter stability in total variation and $W_2$ is a consequence of the latter), absolute continuity of $\mu$ is necessary for the statements to be meaningful. However, our careful analysis tries to avoid unnecessary assumptions on $\mu$, such as finite second moments; in fact, some of our proofs (that of Lemma~\ref{lem:Et}, for instance) can be shortened if such an assumption is imposed. While we require an integrability assumption involving the initial law in  Theorem~\ref{thm:OF_stability}, we believe such an assumption is in fact necessary in some form; see Remark~\ref{rmk:observability}. 

Before proceeding, we recall some objects defined in Lemma~\ref{lem:detectability} in the linear detectable setting which play a central role in this section: let $R_\phi\in\R_{++}^{d\times d}$ be the solution of the matrix Riccati equation \eqref{def:Riccati:P}, and
\begin{equation}\label{def:matrices}
    \Sigma_\phi=R_\phi^{-1}, \phi(x)=\frac12 x^T \Sigma_\phi^{-1} x,\;\mu_\phi=\gamma_{\Sigma_\phi},\;\alpha_\phi=-\alpha(-F-R_\phi)<0.
\end{equation}
Let $Q_t$ be the covariance of $X_t^\phi$ from \eqref{eq:dXphi;linear}, and $\m_\phi=\gamma_{Q_\infty}$, recalling from \eqref{eq:Q_inf<Sigma_phi} that $\Sigma_\phi\succeq Q_\infty$.

\subsection{Zakai equation and weighted entropy decay}\label{ssec:C3;ZakaiEnt}
In this section, we combine the characterization of unnormalized filter, Theorem~\ref{thm:gauge}, with the entropy dissipation formula, Lemma~\ref{lem:ent_dissipation}, to deduce a clean weighted entropy decay of the ratio of unnormalized filter to a reference unnormalized filter, initiated at a Gaussian with covariance $\Sigma_\phi$. 

\begin{corollary}\label{cor:entropydissipation}
    Let $(F,H)$ satisfy Assumption~\ref{ass:detectable}. Let $q_0\in\R^d$, and let
    \[\mu_\phi=\tau_{q_0}\gamma_{\Sigma_\phi},\; \mu=e^{-w_0(x-q_0)}\;dx\in\P(\R^d),\] 
    and suppose they $\Ent_{\tau_{q_0}\gamma_{Q_\infty}}(d\mu/d\mu_\phi)<+\infty$.
    
    Let $(q_t)_{t\geq 0}$ be the solution of the SDE \eqref{eq:qpbeta} with initial data $q_0$.
    Then, denoting by $\kappa_\phi$ the log-Sobolev constant of $\m_\phi=\gamma_{Q_\infty}$,
   \begin{equation}\label{eq:entropydissipation}
       \Ent_{\tau_{q_t}\gamma_{Q_\infty}}\left(\frac{d\rho_t^\mu}{d\rho_t^{\mu_\phi}}\right)\leq e^{-\kappa_\phi t}\Ent_{\tau_{q_0}\gamma_{Q_\infty}}\left(\frac{d\mu}{d\mu_\phi}\right) \text{ for all }t\geq 0 \text{ almost surely,}
   \end{equation}
\end{corollary}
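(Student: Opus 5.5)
The plan is to reduce \eqref{eq:entropydissipation} to the deterministic entropy decay of Lemma~\ref{lem:ent_dissipation}, applied to the Ornstein--Uhlenbeck semigroup $P_t^\phi$ of $X^\phi$ in \eqref{eq:dXphi;linear}. The gauge representation of Theorem~\ref{thm:gauge} should make the ratio of the two unnormalized filters, after translation by $q_t$, a deterministic semigroup solution. The weighted entropy is invariant under simultaneous translation of the function and the reference measure. So the left-hand side of \eqref{eq:entropydissipation} equals $\Ent_{\gamma_{Q_\infty}}(f_t)$, where $f_t(x):=\rho_t^\mu(x+q_t)/\rho_t^{\mu_\phi}(x+q_t)$. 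Similarly, the right-hand side equals $\Ent_{\gamma_{Q_\infty}}(f_0)$ with $f_0(x)=\frac{d\mu}{d\mu_\phi}(x+q_0)=e^{-w_0(x)+\phi(x)}$, up to the normalizing constant of $\gamma_{\Sigma_\phi}$, which is harmless by one-homogeneity of $\Ent$.

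\emph{Step 1: identify the ratio.} I will use Theorem~\ref{thm:gauge} with the same $(q_t,p_t,r_t)$ for both initial laws. The reference filter $\rho_t^{\mu_\phi}$ coincides, up to a constant factor, with $\bar\rho$ in \eqref{eq:barrho}. Indeed, $\tau_{-q_0}\mu_\phi=\gamma_{\Sigma_\phi}\propto e^{-\phi}$, and $e^{-\lambda t-\phi}$ is a solution of \eqref{eq:parabolic;varrho} with that initial trace. I expect to need to check that it is the subprobability solution; this follows from Proposition~\ref{prop:subprob_sol} with $f_0\equiv$ const, or alternatively from the uniqueness already proven in Step 2 of Theorem~\ref{thm:gauge}. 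The $p_t\cdot x+r_t$ terms then cancel in the ratio, giving $f_t(x)=e^{-w(t,x)+\lambda t+\phi(x)}$ up to a constant. By Proposition~\ref{prop:subprob_sol}, with $\varrho_0=e^{-w_0}$ and $\varrho_0e^{\phi}=f_0$, this is exactly the semigroup solution $P_t^\phi f_0$. The integrability $f_0\in L^1(\m_\phi)$ is needed for that proposition.

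\emph{Step 2: integrability.} This is where the hypothesis enters. A finite $\Ent_{\gamma_{Q_\infty}}(f_0)$ presupposes $f_0\log f_0$ and $f_0$ integrable against $\gamma_{Q_\infty}$, so $f_0\in L^1(\m_\phi)$ and Proposition~\ref{prop:subprob_sol} applies. One also needs $\varrho_0$ to have mass at most one, which holds since $\mu$ is a probability measure.

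\emph{Step 3: conclude.} Lemma~\ref{lem:detectability} guarantees Assumption~\ref{ass:2}, with $\m_\phi=\gamma_{Q_\infty}$ satisfying the log-Sobolev inequality with constant $\kappa_\phi$ (Gaussian LSI, $\kappa_\phi=\|Q_\infty\|^{-1}$). Then \eqref{eq:Ent_decay} gives $\Ent_{\m_\phi}(f_t)\le e^{-\kappa_\phi t}\Ent_{\m_\phi}(f_0)$ for every $t$. This inequality is deterministic, and it holds for every $\omega$ in the full-measure set on which Theorem~\ref{thm:gauge} holds. Translating back yields \eqref{eq:entropydissipation} for all $t\ge0$ almost surely.

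The main obstacle is Step 1: making sure the reference filter is exactly $\bar\rho$ and that $w$ is the subprobability solution matching the semigroup. The constant normalizations must also be tracked consistently, but one-homogeneity of $\Ent$ absorbs them.
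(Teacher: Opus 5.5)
Your proposal is correct and follows the paper's proof essentially step for step. You write the ratio via the gauge representation of Theorem~\ref{thm:gauge} as $f_t(x-q_t)$, identify $f_t=P_t^\phi f_0$ through Proposition~\ref{prop:subprob_sol}, translate the reference measure, and apply the decay \eqref{eq:Ent_decay} of Lemma~\ref{lem:ent_dissipation}. Your extra bookkeeping fills in details the paper leaves implicit: the normalizing constant of $\gamma_{\Sigma_\phi}$ appears on both sides and is absorbed by one-homogeneity, and $f_0\in L^1(\m_\phi)$ follows from finite entropy.
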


\begin{proof}
    By \eqref{eq:OF;gauge} from Theorem~\ref{thm:gauge} , $\pi_t^\nu=\frac{\rho_t^\nu}{\rho_t^\nu(\one)}$ for $\nu=\mu,\mu_\phi$, where
    \begin{align*}
        \rho_t^\mu(x)=\exp\left(-w(t,x-q_t)-p_t\cdot x-r_t\right), \rho_t^{\mu_\phi}(x)=\exp\left(-\lambda t-\phi(x-q_t)-p_t\cdot x-r_t\right)
    \end{align*}
    and $(q_t,p_t,r_t)_{t\geq 0}$ are given by the SDE \eqref{eq:qpbeta}.
    Thus,
    \begin{align*}
        \frac{d\rho_t^\mu}{d\rho_t^{\mu_\phi}}(x)=e^{-w(t,x-q_t)+\lambda t+\phi(x-q_t)}=f_t(x-q_t),
    \end{align*}
    where, by Proposition~\ref{prop:subprob_sol} (and that $\varrho=e^{-w(t,x)}$ is a subprobability solution of \eqref{eq:parabolic;varrho}), $f_t$ is a semigroup solution of \eqref{eq:Fokker-Planck;ratio} with initial data $f_0(x)=e^{-w_0(x)+\phi(x)}$. Thus, applying Lemma~\ref{lem:ent_dissipation}, we deduce
    \begin{align*}
        \Ent_{\tau_{q_t}\gamma_{Q_\infty}}\left(\frac{d\rho_t^\mu}{d\rho_t^{\mu_\phi}}\right)=\Ent_{\gamma_{Q_\infty}}(f_t)
        \leq e^{-\kappa_\phi t}\Ent_{\gamma_{Q_\infty}}(f_0)= e^{-\kappa_\phi t}\Ent_{\tau_{q_0}\gamma_{Q_\infty}}\left(\frac{d\mu}{d\mu_\phi}\right).
    \end{align*}
\end{proof}

\begin{remark}\label{rmk:entropydissipation}
    The clean decay of the weighted entropy \eqref{eq:entropydissipation} is aesthetically pleasing, but is somewhat far from a meaningful filter stability result for two reasons. Firstly, the inequality is formulated in terms of the unnormalized filter densities, where as the main object of interest in filtering is the optimal filter. The appropriate normalization constant ($Z_t$ defined in \eqref{def:Zt}) needed to convert \eqref{eq:entropydissipation} into a statement about optimal filter is in fact random, and additional hard analysis is required to characterize its behavior.

    Moreover, we will see in the following section that the accurate estimator of the true signal $X_t$ is $q_t-\Sigma_\phi p_t$, rather than $q_t$; in fact, as $t\rightarrow\infty$, $q_t$ may diverge exponentially far from the true signal, in which case the reference measure $\tau_{q_t}\gamma_{Q_\infty}$ will not be able to witness how well the ratio $\rho_t^\mu/\rho_t^{\mu_\phi}$ is behaved in the most relevant regions.

    Thus, Corollary~\ref{cor:entropydissipation} is best seen as capturing a certain geometric property of the solutions of the Zakai equations.
\end{remark}

\subsection{Asymptotic behavior of the gauge SDE}\label{ssec:asymp;SDE}
As we have discussed in the introduction, Section~\ref{sssec:summar;large-time},
\[\frac{d\pi_t^\mu}{d\pi_t^{\mu_\phi}}(x)=Z_t^{-1}\frac{\rho_t^\mu(x)}{\rho_t^{\mu_\phi}(x)} \text{ where } Z_t=\int_{\R^d} f_t(x-\Sigma_\phi p_t)\;d\gamma_{\Sigma_\phi},\]
where $f_t$ is the semigroup solution of \eqref{eq:Fokker-Planck;ratio}. Thus, if one wishes to translate Corollary~\ref{cor:entropydissipation} into a meaningful statement about the optimal filter, one must show that $Z_t$ converges as $t\rightarrow\infty$. 

In general $p_t$ from \eqref{eq:qpbeta} may grow exponentially as $t\rightarrow\infty$, whereas $f_t$ exponentially stabilizes to a constant. Thus, in order to understand the asymptotic behavior of $Z_t$, a finer quantitative analysis on these two competing effects is necessary. We will see in Section~\ref{ssec:C3;filterstability} understanding this precisely is the key to establishing quantitative filter stability.

We first record the following classical fact on the asymptotic behavior of solutions of linear ODEs. Let $M\in\R^{d\times d}$ for any $\kappa<-\alpha(-M)$, there exists $C(\kappa,\alpha(-M))>0$ such that,
\begin{equation}\label{eq:op-norm;Ceps}
        \|e^{-Mt}\|\leq C e^{-\kappa t} \text{ for all } t\geq 0,
\end{equation}
where for our purposes $\|\cdot\|$ is the operator norm with respect to the Euclidean distance; see for instance, \cite[Corollary 3.6]{Teschll12ODE}.

In light of Theorem~\ref{thm:gauge}, the optimal filter $\pi_t^{\mu_\phi}$ with initial datum $\mu_\phi=\tau_{q_0}\gamma_{\Sigma_\phi}$ is characterized by
\[\pi_t^{\mu_\phi}=N(q_t-\Sigma_\phi p_t,\Sigma_\phi).\]
Thus, if $(X_t)_{t\geq 0}$ is the true signal producing the observations, we expect the error process
\begin{equation}\label{def:Et}
    E_t:=q_t-\Sigma_{\phi}p_t-X_t
\end{equation}
to stabilize as $t\rightarrow\infty$. We verify this fact as well as a few related facts about the asymptotic behavior of the solution $(q_t,p_t)_{t\geq 0}$ to the SDE \eqref{eq:qpbeta}.

\begin{lemma}\label{lem:Et}
    The error process $E_t$ defined in \eqref{def:Et} satisfies the ergodic Ornstein-Uhlenbeck SDE
    \begin{equation}\label{eq:et;OU}
        dE_t = (F-\Sigma_\phi H^T\Gamma^{-1} H)E_t\;dt +\Sigma_\phi H^T\Gamma^{-1/2}\;dW_t^2-dW_t^1.
    \end{equation}
    The rescaled shift term 
    \begin{equation}\label{def:rt}
        S_t:=-e^{-(F+R_\phi)t}\Sigma_{\phi}p_t,
    \end{equation}
    which is $\Y_t$-measurable, satisfies
    \begin{equation}\label{eq:rt}
        S_t=S_0-\int_0^t e^{-(F+R_\phi)s}\Sigma_\phi H^T\Gamma^{-1}HE_s\;ds + \int_0^t e^{-(F+R_\phi)s}\Sigma_\phi \sH\sG^{-1/2}\;dW_s^2.
    \end{equation}
    In particular, letting $\alpha_\phi:=-\alpha(-F-R_\phi)>0$, there exists a $\Y$-measurable, almost surely finite random vector $S_\infty$ such that, for every $\eps\in (0,\alpha_\phi)$, there exists some almost surely finite constant $C=C(\eps,\omega)>0$ satisfying
    \begin{equation}\label{eq:r_infty-S_t}
        |S_t-S_\infty|\leq C(\eps,\omega) e^{-(\alpha_\phi-\eps)t} \text{ for all } t\geq 0.
    \end{equation}
\end{lemma}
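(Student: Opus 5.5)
The plan is to verify \eqref{eq:et;OU} and \eqref{eq:rt} by direct It\^{o} calculus from \eqref{eq:qpbeta} and \eqref{eq:signal-obs}, and then get the almost-sure rate \eqref{eq:r_infty-S_t} by controlling the two integrals in \eqref{eq:rt} separately.

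The algebraic input is the covariance form of the Riccati equation. Multiplying \eqref{def:Riccati:P} by $\Sigma_\phi$ on both sides gives
\[
\Sigma_\phi F^T+F\Sigma_\phi+I-\Sigma_\phi H^T\Gamma^{-1}H\Sigma_\phi=0 .
\]
For $E_t$, substitute $dY_t=HX_t\,dt+\Gamma^{1/2}dW^2_t$ and $dX_t=FX_t\,dt+dW^1_t$ into $dE_t=dq_t-\Sigma_\phi dp_t-dX_t$. Then eliminate $q_t$ via $q_t=E_t+\Sigma_\phi p_t+X_t$.
\begin{itemize}
\item The $X_t$-terms cancel.
\item The $E_t$-terms collect into $(F-\Sigma_\phi H^T\Gamma^{-1}H)E_t$.
\item The $p_t$-terms have coefficient $F\Sigma_\phi+I-\Sigma_\phi H^T\Gamma^{-1}H\Sigma_\phi+\Sigma_\phi F^T$, which vanishes by the identity above.
\end{itemize}
This yields \eqref{eq:et;OU}. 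Ergodicity follows from Remark~\ref{rmk:alpha_phi}: $\alpha(F-K_\infty H)=-\alpha_\phi<0$, with constant nondegenerate-covariance noise.

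For $S_t$, the same substitution gives
\[
d(\Sigma_\phi p_t)=\big(\Sigma_\phi H^T\Gamma^{-1}HE_t+(\Sigma_\phi H^T\Gamma^{-1}H\Sigma_\phi-\Sigma_\phi F^T)p_t\big)dt-\Sigma_\phi H^T\Gamma^{-1/2}dW^2_t .
\]
By the Riccati identity, the $p_t$-coefficient equals $F\Sigma_\phi+I=(F+R_\phi)\Sigma_\phi$. Hence $d(e^{-(F+R_\phi)t}\Sigma_\phi p_t)=e^{-(F+R_\phi)t}\big(\Sigma_\phi H^T\Gamma^{-1}HE_t\,dt-\Sigma_\phi H^T\Gamma^{-1/2}dW^2_t\big)$, which is \eqref{eq:rt} after the sign in \eqref{def:rt}. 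The process $S_t$ is $\Y_t$-measurable because $p_t$ is a functional of $(Y_s)_{s\le t}$ through the linear SDE \eqref{eq:qpbeta}. Note $S_0=0$.

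For convergence, fix $\eps$ and use \eqref{eq:op-norm;Ceps} with $M=F+R_\phi$ and $\kappa=\alpha_\phi-\eps/2$.
\begin{itemize}
\item \emph{Drift term.} Since $E$ is a stable Gaussian Ornstein–Uhlenbeck process, $\sup_{s\in[n,n+1]}|E_s|$ has uniformly Gaussian tails. By Borel–Cantelli, $|E_s|\le C(\omega)\sqrt{\log(2+s)}$ almost surely. The Lebesgue integral therefore converges absolutely, and its tail beyond $t$ is bounded by $C(\omega)e^{-(\alpha_\phi-\eps)t}$.
\item \emph{Stochastic term.} The integral $M_t=\int_0^t e^{-(F+R_\phi)s}\Sigma_\phi H^T\Gamma^{-1/2}dW^2_s$ has a deterministic integrand in $L^2(\R_+)$, so $M_\infty$ exists. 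On each $[n,n+1]$, Doob's/BDG inequality gives that $\sup_{t\ge n}|M_\infty-M_t|$ is Gaussian-type with variance $\lesssim e^{-2(\alpha_\phi-\eps/2)n}$. Borel–Cantelli at integer times then gives $\sup_{t\ge n}|M_\infty-M_t|\le e^{-(\alpha_\phi-\eps)n}$ eventually, which extends to all $t$ after enlarging the constant.
\end{itemize}
Define $S_\infty$ as the sum of the two limits. It is $\Y$-measurable as the almost-sure limit of the $\Y_t$-measurable $S_t$, and \eqref{eq:r_infty-S_t} follows.

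The main obstacle is this almost-sure rate, specifically the growth control on $E$ and the uniform-in-time tail of $M$. The algebra is routine once the Riccati identity is in hand.
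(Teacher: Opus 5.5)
Your It\^{o} computations for \eqref{eq:et;OU} and \eqref{eq:rt} are correct and coincide with the paper's Step~1. You use the covariance form $\Sigma_\phi F^T+F\Sigma_\phi+I-\Sigma_\phi H^T\Gamma^{-1}H\Sigma_\phi=0$ of the Riccati equation, whereas the paper manipulates the same identity in terms of $R_\phi$. Your Doob/Borel--Cantelli treatment of the stochastic integral matches Step~3.

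The gap is in the drift term. There you assert that $E$ is a Gaussian Ornstein--Uhlenbeck process, so that $\sup_{s\in[n,n+1]}|E_s|$ has uniformly Gaussian tails. Under $\bP^\mu$ one has $E_0=q_0-\Sigma_\phi p_0-X_0=q_0-X_0$ with $X_0\sim\mu$. The lemma is meant for arbitrary $\mu\in\P(\R^d)$ with no moment assumptions (cf.\ Remark~\ref{rmk:lem:Et}; for observable $(F,H)$, Theorem~\ref{thm:OF_stability}, whose proof rests on \eqref{eq:r_infty-S_t}, is applied to every $\mu$, see Remark~\ref{rmk:observability}). So $E$ is Gaussian only when $\mu$ is, and the claimed tails fail, e.g.\ for Cauchy-type $\mu$. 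Simply dropping Gaussianity and running Borel--Cantelli on $E$ itself does not rescue the step either. The contribution of $E_0$ to the probabilities $\bP^\mu(\sup_{s\in[n,n+1]}|E_s|>K\sqrt{\log n})$ need not be summable in $n$; it fails to be summable when $\log^+|X_0|$ is not $\mu$-integrable.

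The missing step is the paper's Step~2. Split $E_t=e^{At}E_0+U_t$, with $A=F-\Sigma_\phi H^T\Gamma^{-1}H$ and $U_t=\int_0^t e^{A(t-s)}\,d(\Sigma_\phi H^T\Gamma^{-1/2}W^2_s-W^1_s)$.
\begin{itemize}
\item The process $U$ is a centered Gaussian process with $\sup_t\E|U_t|^2<\infty$. It depends only on $(W^1,W^2)$, so its law is the same under every $\bP^\mu$. Your Gaussian-tail and Borel--Cantelli argument therefore applies to $U$ verbatim. The paper settles for the cruder bound $|U_t|\le C(\eps,\omega)e^{\eps t}$ via Doob and Markov, which is equally sufficient.
\item The term $e^{At}E_0$ must be handled pathwise, not probabilistically. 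Since $\alpha(A)<0$ and $|E_0(\omega)|<\infty$, one has $|e^{At}E_0|\le C|E_0|$ for all $t$.
\end{itemize}
With this decomposition, $|E_s|\le C(\omega)\sqrt{\log(2+s)}$ holds $\bP^\mu$-a.s.\ for every $\mu$. The rest of your argument then goes through unchanged. This includes the $\Y$-measurability of $S_\infty$. It also includes the common full-measure event required by the Convention, because the exceptional set is determined by the path of $S$ alone.
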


\begin{remark}\label{rmk:lem:Et}
   Although Lemma~\ref{lem:Et} involves the signal process $X=(X_t)_{t\geq 0}$ hence depends on the initial data $X_0$, the probabilistic statements only concern $r=(S_t)_{t\geq 0}$ and its limit $S_\infty$ are $\Y$-measurable. Moreover, the proof will clarify that the almost sure statements are independent of the choice of the initial law for $X_0$. 
\end{remark}

\begin{proof}
We split the proof into three steps. In \emph{Step 1.}, we verify \eqref{eq:et;OU} and \eqref{eq:rt}. Then \emph{Step 2.} verifies almost sure asymptotic exponential decay of $|E_t|$, independent of the choice of initial law for $X_0$. Finally, in \emph{Step 3. } we use this to deduce \eqref{eq:r_infty-S_t}.
\medskip

\emph{Step 1.} We first verify the equations \eqref{eq:et;OU} and \eqref{eq:rt}. The former follows from straightforward calculations. Indeed, 
    \begin{align*}
        dE_t =& (dq_t-\Sigma_\phi dp_t-dX_t)\\
        =&(F-\Sigma_\phi H^T\Gamma^{-1} H)(q_t-X_t)\;dt + (R_\phi+\Sigma_\phi F^T R_\phi)\Sigma_\phi p_t\;dt 
        + \Sigma_\phi H^T\Gamma^{-1/2} dW_t^2 - dW_t^1.
    \end{align*}
    By \eqref{def:Riccati:P},
    \[R_\phi+\Sigma_\phi F^T R_\phi = \Sigma_\phi (R_\phi^2+ F^T R_\phi)=\Sigma_\phi(H^T\Gamma^{-1} H- R_\phi F)=-(F-\Sigma_\phi H^T\Gamma^{-1} H),\]
    hence we deduce \eqref{eq:et;OU}. To verify the ergodicity of $(E_t)_t$, note
    \begin{align*}
        F-\Sigma_\phi H^T\Gamma^{-1}H=R_\phi^{-1}(R_\phi F-H^T\Gamma^{-1}H)=-R_\phi^{-1}(R_\phi^2+F^T R_\phi)=R_\phi^{-1}(-F-R_\phi)^T R_\phi.
    \end{align*}
    As $\alpha(-F-R_\phi)<0$, we deduce that $\alpha(F-\Sigma_\phi H^T\Gamma^{-1}H)<0$.

    Note
    \begin{align*}
       d(q_t-X_t)=F(q_t-X_t)\;dt + p_t\;dt - dW_t^1.
    \end{align*}
    Thus, recalling $\Sigma_\phi=R_\phi^{-1}$, we have
    \begin{align*}
        d\Sigma_\phi p_t=d(q_t-X_t)-dE_t = \Sigma_\phi H^T\Gamma^{-1}H (q_t-X_t)\;dt - \Sigma_\phi F^T R_\phi(\Sigma_\phi p_t)\;dt - \Sigma_\phi H^T\Gamma^{-1/2}dW_t^2.
    \end{align*}
    From the Riccati equation \eqref{def:Riccati:P},
    \[-\Sigma_\phi F^T R_\phi = \Sigma_\phi (R_\phi F+ R_\phi^2 - H^T\Gamma^{-1}H)=F+R_\phi-\Sigma_\phi H^T\Gamma^{-1}H.\]
    Thus,
    \begin{align*}
        d \Sigma_\phi p_t = (F+R_\phi)\Sigma_\phi p_t\;dt + \Sigma_\phi H^T\Gamma^{-1}H E_t\;dt  - \Sigma_\phi H^T\Gamma^{-1/2}dW_t^2.
    \end{align*}
    Hence $S_t=-e^{-(F+R_\phi)t}\Sigma_\phi p_t$ satisfies
    \begin{align*}
        dS_t=-e^{-(F+R_\phi)t}\left(\Sigma_\phi H^T\Gamma^{-1}H E_t\;dt  - \Sigma_\phi H^T\Gamma^{-1/2}dW_t^2\right),
    \end{align*}
    which is precisely \eqref{eq:rt}.
\medskip

\emph{Step 2.}
    To separate the issue of dependence on the initial law of $X_0$, let
    \[A=F-\Sigma_\phi H^T\Gamma^{-1}H,\qquad M_t=\Sigma_\phi H^T\Gamma^{-1/2} W_t^2- W_t^1,\]
    noting that $M=(M_t)_{t\geq 0}$ is a square-integrable martingale under any $\bP^\mu$, as $M$ is independent of $\mu$. Define $U=(U_t)_{t\geq 0}$ by
    \[U_t:=\int_0^t e^{A(t-s)}dM_s,\]
    so that we have the decomposition
    \begin{equation}\label{eq:Et;decomposition}
    E_t=e^{At}E_0+U_t.
    \end{equation}
    In particular, $Z$ is independent of $X_0$. 
    
    We claim that, for each $\eps>0$, there exists an event $\Omega_\eps\subset\F$ such that $\bP^\mu(\Omega_\eps)=1$ for all $\mu\in\P(\R^d)$, and
    \begin{equation}\label{eq:Et;bd}
        |E_t|\leq C(\eps,\omega)e^{\eps t} \text{ for all } t\geq 0,\;\omega\in\Omega_\eps.
    \end{equation}
    As $E_t=e^{At}E_0+U_t$ with $\alpha(A)<0$, it suffices to show the analogous claim for $U_t$
    \begin{equation}\label{eq:Zt}
        |U_t|\leq C(\eps,\omega) e^{\eps t} \text{ for all }t\geq 0,\;\omega\in\Omega_\eps.
    \end{equation}
    
    To this end, write for each $t\in[n,n+1]$
    \[U_t=e^{A(t-n)}\left(Z_{n}+\int_{n}^{t}e^{-A(s-n)}dM_s\right).\]
    Then, setting $c=\sup_{r\in[0,1]}\|e^{Ar}\|$ and applying Doob's maximal inequality,
    \begin{align*}
        \sup_{n}\E^\mu \sup_{t\in[n,n+1]}|U_t|^2 &\leq 2c^2\sup_{n} \E^\mu\sup_{t\in[n,n+1]}\left(|U_n|^2+\left|\int_n^{t} e^{-A(s-n)}dM_s\right|^2\right)
        \\ &\leq  2c^2\sup_{n}\left(\E|U_n|^2+4 \E^\mu\left|\int_n^{n+1} e^{-A(s-n)}dM_s\right|^2\right).
    \end{align*}
    Note that the right-hand is finite. Indeed, as $\alpha(A)<0$, It\^{o} isometry yields $\sup_{t\geq 0}\E|U_t|^2<+\infty$, and the integral term is also easily controlled by It\^{o} isometry. Thus, from Markov's inequality
    \[        \sum_{n=1}^\infty \bP^\mu \left( \sup_{t\in[n,n+1]}|U_t| > e^{\eps n}\right)<+\infty,\]
    and by the Borel-Cantelli lemma, we can fix a set $\Omega_\eps\subset\F$ such that \eqref{eq:Zt} holds. As $U=(U_t)_{t\geq 0}$ only depends on $W^1,W^2$, we can choose $\Omega_\eps$ to be independent of $\mu$.
\medskip

\emph{Step 3.}
    As $\alpha(-F-R_\phi)<0$, choosing $\eps<\alpha_\phi$ in \eqref{eq:Et;bd}, we see that
    \[S_\infty = S_0-\int_0^\infty e^{-(F+R_\phi)s}\Sigma_\phi H^T\Gamma^{-1}HE_s\;ds + \int_0^\infty e^{-(F+R_\phi)s}\Sigma_\phi \sH\sG^{-1/2}\;dW_s^2\]
    is a well-defined random vector that is $\bP^\mu$-a.s. finite for every $\mu\in\P(\R^d)$.
    
    To deduce \eqref{eq:r_infty-S_t}, note that
    \begin{align*}
        S_\infty-S_t = -e^{-(F+R_\phi)t}\int_0^{\infty} e^{-(F+R_\phi)s}\Sigma_\phi H^T\Gamma^{-1} HE_{t+s}\;ds+ \int_t^\infty e^{-(F+R_\phi)s}\Sigma_\phi \sH\sG^{-1/2}\;dW_{s}^2.
    \end{align*}
    Now fix any $\eps<\alpha_\phi$. By \eqref{eq:Et;bd} applied to $\eps/2>0$,
    \[\left|\int_0^{\infty} e^{-(F+R_\phi)s}\Sigma_\phi H^T\Gamma^{-1} H^T E_{t+s}\;ds\right| \leq C(\eps,\omega) e^{\eps t/2},\]
    up to modifications of the constant. Combining with \eqref{eq:op-norm;Ceps} applied to $\kappa'=\alpha_\phi-\eps/2$, we have the desired $O(e^{-(\alpha_\phi-\eps)t})$ decay rate of the deterministic integral.
    
    On the other hand, for the stochastic integral, applying the analogous argument as in \emph{Step 2.} via Doob's maximal inequality, It\^{o} isometry, and Borel-Cantelli lemma, $|\int_t^\infty e^{-(F+R_\phi)s}\Sigma_\phi \sH^T\sG^{-1/2}\;dW_{s}^2|$ can be controlled by $C'(\eps,\omega) e^{-(\alpha_\phi-\eps) t}$ on an event of full-measure for all $\bP^\mu$, and we conclude \eqref{eq:r_infty-S_t}. Moreover, since $S_t$ is $\mathcal Y_t$-measurable and $S_t\to S_\infty$ almost surely, we may choose a version of $S_\infty$ that is $\mathcal Y$-measurable.

\end{proof}

\subsection{Asymptotic exponential stability of optimal filter}\label{ssec:C3;filterstability}
We are now equipped to analyze quantitative stability of optimal filter. In the proofs of this section, we will use the following notation: given $a,b\in\R^d$, $[a,b]$ refers to the line segment between $a$ and $b$.

We begin by establishing a useful lemma.
\begin{lemma}\label{lem:cov_err}
    Suppose $(F,H)$ satisfies Assumption~\ref{ass:detectable}. Let $Q_t,Q_\infty$ be as defined in Lemma~\ref{lem:detectability}, and let $\eta\in \M_+(\R^d)$ be a nonnegative Radon measure and $\beta>1$ such that $(\gamma_{\beta Q_\infty}\eta)(\R^d)<+\infty$. 
    Then, for all $t>0$ sufficiently large such that $\frac12Q_\infty \preceq Q_t \preceq Q_\infty$, we have
    \begin{equation}\label{eq:cov_err_1}
    |\eta\ast \gamma_{Q_t}(a)-\eta\ast \gamma_{Q_\infty}(a)| \leq  C(\beta,d,Q_\infty) \|Q_t-Q_\infty\| (\gamma_{\beta Q_\infty}\eta)(\R^d) (1+|a|^2)\exp\left(\frac{a^T Q_\infty^{-1}a}{2(\beta-1)}\right).
    \end{equation} and
    \begin{equation}\label{eq:cov_err_2}
    |\eta\ast \gamma_{Q_t}(a)-\eta\ast \gamma_{Q_t}(b)| C(\beta,d,Q_\infty)\leq |a-b|(\gamma_{\beta Q_\infty}\eta)(\R^d) \sup_{c\in[a,b]}(1+|c|^2)\exp\left(\frac{c^T Q_\infty^{-1}c}{2(\beta-1)}\right)
    \end{equation}
\end{lemma}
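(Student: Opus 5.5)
Both estimates come from explicit manipulation of Gaussian densities. The idea is to bound the Gaussian kernels (or their derivatives) pointwise by a multiple of $\gamma_{\beta Q_\infty}$, and then pay for the shift by $a$ with the exponential factor in the statement. Write $\eta\ast\gamma_{Q}(a)=\int \gamma_Q(a-x)\,d\eta(x)$. The key pointwise inequality to establish is: for $Q$ with $\frac12 Q_\infty\preceq Q\preceq Q_\infty$,
\[
\gamma_{Q}(a-x)\le C(d,Q_\infty)\,\gamma_{\beta Q_\infty}(x)\exp\Bigl(\tfrac{a^TQ_\infty^{-1}a}{2(\beta-1)}\Bigr)\cdot\text{(polynomial factor)}.
\]
I would take the normalizing constants $\det(2\pi Q)^{-1/2}\le \det(\pi Q_\infty)^{-1/2}$ into $C$. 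For the exponent, use $Q\preceq Q_\infty$, which gives $(a-x)^TQ^{-1}(a-x)\ge (a-x)^TQ_\infty^{-1}(a-x)$. Then apply the elementary inequality, valid in the $Q_\infty^{-1}$ norm,
\[
|a-x|^2\ge \tfrac{1}{\beta}|x|^2-\tfrac{1}{\beta-1}|a|^2 .
\]
This is Young's inequality $2\langle a,x\rangle\le \epsilon|x|^2+\epsilon^{-1}|a|^2$ with $\epsilon=1-1/\beta$, which gives $|a-x|^2\ge(1-\epsilon)|x|^2+(1-\epsilon^{-1})|a|^2$. It yields $e^{-\frac12 (a-x)^TQ^{-1}(a-x)}\le e^{-\frac{1}{2\beta}x^TQ_\infty^{-1}x}\,e^{\frac{a^TQ_\infty^{-1}a}{2(\beta-1)}}$, which is exactly the weight $\gamma_{\beta Q_\infty}(x)$ up to constants times the stated exponential.

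For \eqref{eq:cov_err_1}, I would write $\gamma_{Q_t}-\gamma_{Q_\infty}$ via the mean value theorem along the segment $Q_s=Q_\infty+s(Q_t-Q_\infty)$, $s\in[0,1]$. Every $Q_s$ then also satisfies $\frac12Q_\infty\preceq Q_s\preceq Q_\infty$. Since $\partial_Q\gamma_Q(y)=\frac12\gamma_Q(y)\bigl(Q^{-1}yy^TQ^{-1}-Q^{-1}\bigr)$ (the heat equation in the covariance), we get
\[
|\gamma_{Q_t}(y)-\gamma_{Q_\infty}(y)|\le C\|Q_t-Q_\infty\|\sup_s\gamma_{Q_s}(y)(1+|y|^2).
\]
With $y=a-x$, the factor $|a-x|^2\le 2|a|^2+2|x|^2$ introduces a $|x|^2$. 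This is the main obstacle: the measure $\eta$ only integrates against $\gamma_{\beta Q_\infty}$, with no polynomial room. I would handle it by absorbing the polynomial into a slightly worse exponent. One option is to run the Young step with $\beta'<\beta$ slightly, so the exponent $\frac1{2\beta'}$ beats $\frac1{2\beta}$, and use $|x|^2e^{-\delta|x|^2}\le C_\delta$. Alternatively, and more cleanly, one can use the factor-$2$ slack from $Q_s\succeq \frac12 Q_\infty$ inside the normalization/exponent splitting, taking a convex combination. Either way the constant depends only on $(\beta,d,Q_\infty)$. If necessary, the exponential in $a$ is slightly inflated but then re-tightened by the same trick applied to $a$. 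I expect to juggle the parameter to land exactly on $\frac{1}{2(\beta-1)}$ and the factor $(1+|a|^2)$. Integrating against $\eta$ then gives \eqref{eq:cov_err_1}.

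For \eqref{eq:cov_err_2}, apply the mean value theorem in space:
\[
|\eta\ast\gamma_{Q_t}(a)-\eta\ast\gamma_{Q_t}(b)|\le |a-b|\sup_{c\in[a,b]}\int|\nabla\gamma_{Q_t}(c-x)|\,d\eta(x).
\]
Here differentiation under the integral is justified by the same domination, and $|\nabla\gamma_{Q_t}(y)|\le C\gamma_{Q_t}(y)|y|$. The identical pointwise bound with $a$ replaced by $c$, together with the same polynomial absorption, finishes the proof.
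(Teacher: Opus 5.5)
There is a genuine gap at exactly the step you flag as the main obstacle. The issue is absorbing the polynomial weight ($|a-x|^2$ in \eqref{eq:cov_err_1}, $|c-x|$ in \eqref{eq:cov_err_2}) while keeping the exponent $\frac{1}{2(\beta-1)}$ in $a$, and neither of your proposed fixes does this.

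\begin{itemize}
\item Running Young's inequality with some $\beta'<\beta$ yields the factor $\exp\bigl(\frac{a^TQ_\infty^{-1}a}{2(\beta'-1)}\bigr)$. Its ratio to the target $(1+|a|^2)\exp\bigl(\frac{a^TQ_\infty^{-1}a}{2(\beta-1)}\bigr)$ grows like $e^{c|a|^2}$ with $c>0$. Since the bound must hold for every $a$, nothing can ``re-tighten'' it.
\item Trading part of the Gaussian in $y=a-x$ for the polynomial first gives the same inflation. It amounts to replacing $Q_\infty$ by $Q_\infty/(1-\delta)$ before the Young step.
\item The factor-$2$ slack $Q_s\succeq\frac12 Q_\infty$ only controls the normalizing constant and $\|Q_s^{-1}\|$. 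The decay of $\gamma_{Q_s}$ comes from $Q_s\preceq Q_\infty$, which is an equality at $s=0$, so there is no spare decay to trade.
\end{itemize}
As written, your argument proves the two estimates only with a strictly larger exponent in $a$, not the stated ones. That weaker version would in fact still suffice for Theorem~\ref{thm:OF_stability}.

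The missing idea is that Young's inequality discards exactly the term you need. With $|v|_{Q_\infty}^2=v^TQ_\infty^{-1}v$, the exact completed square is
\[
\tfrac12|a-x|_{Q_\infty}^2-\tfrac{1}{2\beta}|x|_{Q_\infty}^2=\tfrac{\beta-1}{2\beta}\bigl|x-\tfrac{\beta}{\beta-1}a\bigr|_{Q_\infty}^2-\tfrac{1}{2(\beta-1)}|a|_{Q_\infty}^2 .
\]
It gives $\gamma_{Q_\infty}(a-x)=\beta^{d/2}\gamma_{\beta Q_\infty}(x)\,e^{\frac{1}{2(\beta-1)}|a|_{Q_\infty}^2}\,e^{-\frac{\beta-1}{2\beta}|x-\frac{\beta}{\beta-1}a|_{Q_\infty}^2}$. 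Your Young inequality is this identity with the last factor bounded by $1$.

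Now split $a-x=-\frac{1}{\beta-1}a-\bigl(x-\frac{\beta}{\beta-1}a\bigr)$, so that $1+|a-x|^2\le 1+\frac{2}{(\beta-1)^2}|a|^2+2\bigl|x-\frac{\beta}{\beta-1}a\bigr|^2$. The last term is absorbed by the leftover Gaussian via $r^2e^{-kr^2}\le (ek)^{-1}$, uniformly in $a$ and $x$. The first two terms give the factor $1+|a|^2$, with constants depending only on $(\beta,d,Q_\infty)$. Integrating against $\gamma_{\beta Q_\infty}\eta$ then produces exactly $(\gamma_{\beta Q_\infty}\eta)(\R^d)(1+|a|^2)\exp\bigl(\frac{a^TQ_\infty^{-1}a}{2(\beta-1)}\bigr)$.

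This is how the paper proceeds. With this step inserted, the rest of your plan coincides with the paper's proof: the covariance interpolation using $\gamma_{Q_s}\le 2^{d/2}\gamma_{Q_\infty}$ and $\|Q_s^{-1}\|\le 2\|Q_\infty^{-1}\|$, and the spatial mean value theorem with $|y|\le 1+|y|^2$ applied at each $c\in[a,b]$.
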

\begin{proof}
    We have that
    \[
    \left|\eta\ast \gamma_{Q_t}(a)-\eta\ast \gamma_{Q_\infty}(a)\right|\leq \int_{\R^d}\left|\gamma_{Q_t}(a-z)-\gamma_{Q_\infty}(a-z)\right|\ d\eta(z).
    \] 
    Observe the following pointwise bound: Let $D_t = Q_t -Q_\infty\preceq 0$ and write $\tilde Q_r:=Q_\infty+r D_t\leq Q_\infty$ be a reparametrization with $r\in[0,1]$. Then
     \begin{align*}
        |\gamma_{Q_t}(a-z)-\gamma_{Q_\infty}(a-z)|=&\left|\int_0^1 \frac{d}{dr}\gamma_{\tilde Q_r}(a-z)\;dr\right| \\
        \leq&\int_0^1 \left|\gamma_{\tilde Q_r}(a-z)[(a-z)^T\tilde Q_r^{-1}D_t\tilde Q_r^{-1}(a-z)-\tr(\tilde Q_r^{-1}D_t)]\right|\;dr \\
        \leq& C \gamma_{Q_\infty}(a-z)\left(\|Q_\infty^{-1}\|^2 \|D_t\||a-z|^2+d\|Q_\infty^{-1}\|\|D_t\|\right);
    \end{align*}
    Then by the previous estimates and the fact that $f$ is non-negative
    \[
    \int_{\R^d}|\gamma_{Q_t}(a-z)-\gamma_{Q_\infty}(a-z)|\ d\eta(z) \leq  C \|Q_\infty^{-1}\| \|D_t\|\int_{\R^d}  (1+|a-z|^2)\gamma_{Q_\infty}(a-z)\;d\eta(z). 
    \]
     Now observe that
 \begin{align}\label{eq:complete sq lem}
     & \int_{\R^d} (1+|a-z|^2)\frac{\gamma_{ Q_\infty}(a-z)}{\gamma_{\beta Q_\infty}(z)}\gamma_{\beta  Q_\infty}(z)\;d\eta(z)\notag\\
     &= \beta^{d/2}\int_{\R^d}(1+|a-z|^2) \exp\left(-\frac12 (a-z)^T Q^{-1}_\infty (a-z)+\frac{1}{2\beta}z^T Q_\infty^{-1}z\right)\gamma_{\beta Q_\infty}(z)\;d\eta(z). 
 \end{align} Then noting that $|a-z|^2=\left|\frac{\beta a}{\beta-1}-\frac{a}{\beta-1}-z\right|^2 \leq \frac{2|a|^2}{(\beta-1)^2}+2\left|z-\frac{\beta a}{\beta-1}\right|$ and applying H\"{o}lder's inequality, we see that the integral on the right-hand side of \eqref{eq:complete sq lem} controlled by
 \begin{align*}
     &(\gamma_{\beta Q_\infty}\eta)(\R^d) \sup_{z\in \R^n}\left(1+\frac{2|a|^2}{(\beta-1)^2}+2\left|z-\frac{\beta a }{\beta-1}\right|^2\right)\exp\left(-\frac12 (a-z)^T Q^{-1}_\infty (a-z)+\frac{1}{2\beta}z^T Q_\infty^{-1}z\right) \notag\\
     &\leq \exp\left(\frac{a^T Q^{-1}_\infty a}{2(\beta-1)}\right)(\gamma_{\beta Q_\infty}\eta)(\R^d) \notag\\ 
     &\quad\times\sup_{z} \left(1+\frac{2|a|^2}{(\beta-1)^2}+2\left|z-\frac{\beta a }{\beta-1}\right|^2\right)\exp\left(-\frac{\beta-1}{2\beta}\left(z-\frac{\beta}{\beta-1}a\right)^TQ^{-1}_\infty \left(z-\frac{\beta}{\beta-1}a\right)\right)\notag\\
     &\leq \exp\left(\frac{a^T Q^{-1}_\infty a}{2(\beta-1)}\right)(\gamma_{\beta Q_\infty}\eta)(\R^d) \left(1+\frac{2|a|^2}{(\beta-1)^2} +\frac{4\beta}{e\lambda_{\min}(Q^{-1}_\infty)(\beta-1)}\right)
 \end{align*} 
 where in the last line $\lambda_{\min}(Q_\infty^{-1})$ is the smallest eigenvalue of $Q_\infty^{-1}$, and we used the fact that $r^2e^{-kr^2}\leq \frac{1}{ek}$ for $k>0$. We can then pick a constant $C(\beta,d,Q_\infty)$ such that 
 \begin{equation}\label{eq:complete sq}
     \int_{\R^d}(1+|a-z|^2)\gamma_{Q_\infty}(a-z)d\eta(z)\leq C(\beta,d,Q_\infty) (\gamma_{\beta Q_\infty}\eta)(\R^d)(1+|a|^2)\exp\left(\frac{a^T Q_\infty^{-1}a}{2(\beta-1)}\right).
 \end{equation} Then it follows that
 \[
 |\eta\ast \gamma_{Q_t}(a)-\eta\ast \gamma_{Q_\infty}(a)|\leq C(\beta,d,Q_\infty) \|Q_t-Q_\infty\| (\gamma_{\beta Q_\infty}\eta)(\R^d) (1+|a|^2)\exp\left(\frac{a^T Q_\infty^{-1}a}{2(\beta-1)}\right).
 \]
 This concludes the first claim. For the second claim we proceed similarly: 
    \[
    |\eta\ast \gamma_{Q_t}(a)-\eta\ast\gamma_{Q_t}(b)|\leq \int_{\R^d}|f(z)||\gamma_{Q_t}(a-z)-\gamma_{Q_t}(b-z)|dz.
    \] It follows that
    \begin{align*}
    |\gamma_{Q_t}(a-z)-\gamma_{Q_t}(b-z)| &\leq \left|\int_0^1 \frac{d}{dr}\gamma_{Q_t}(r(a-z)+(1-r)(b-z))dr\right|\\
    &\leq|a-b| \|Q_t^{-1}\| \int_0^1 \left|(b+r(a-b)-z)\gamma_{Q_t}(b+r(a-b)-z)\right|dr.
    \end{align*} Using the fact that $Q_t \succeq \frac12 Q_\infty$ and $Q_t \preceq Q_\infty$, we have that
    \begin{align*}
            |\gamma_{Q_t}(a-z)-\gamma_{Q_t}(b-z)| &\leq 2\|Q^{-1}_\infty\||a-b|\int_0^1|b+r(a-b)-z|\gamma_{Q_t}(b+r(a-b)-z)\ dr\\
            &\leq C\|Q^{-1}_\infty\||a-b|\int_0^1|b+r(a-b)-z|\gamma_{Q_\infty}(b+r(a-b)-z)\ dr.
    \end{align*}
    Plugging back into the original expression
    \begin{align*}
        &|\eta\ast \gamma_{Q_t}(a)-\eta\ast\gamma_{Q_t}(b)| \\
        &\leq C\|Q^{-1}_\infty\||a-b| \int_{\R^d}\int_0^1|b+r(a-b)-z|\gamma_{Q_\infty}(b+r(a-b)-z)\ dr\;d\eta(z)\\
        &\leq C\|Q^{-1}_\infty\||a-b|\sup_{c\in[a,b]}\int_{\R^d}(1+|c-z|^2)\gamma_{Q_\infty}(c-z)\;d\eta(z)\\
        &\leq   C(\beta,d,Q_\infty)|a-b|(\gamma_{\beta Q_\infty}\eta)(\R^d) \sup_{c\in[a,b]}(1+|c|^2)\exp\left(\frac{c^T Q_\infty^{-1}c}{2(\beta-1)}\right).
    \end{align*} where we applied Tonelli in the second to last line, applied \eqref{eq:complete sq} and used that $\eta$ is non-negative.
\end{proof}

     We now show that we can prove our main result under the assumption under a mild tail condition on the initial law.
\begin{theorem}\label{thm:OF_stability}
    Suppose $(F,H)$ satisfy Assumption~\ref{ass:detectable}, and let $\alpha_\phi=-\alpha(-F-R_\phi)>0$ as defined in \eqref{def:alpha_phi}. Let $q_0\in\R^d$, $\phi(x)=\frac12 x^T\Sigma_\phi^{-1} x$, $\mu_\phi=\tau_{q_0}\gamma_{\Sigma_\phi}$, and $\mu\in\P(\R^d)$. 
    
    Suppose that for some $\beta>1$, $\mu$ satisfies
    \begin{equation}\label{eq:tail_cond}
        \left(\gamma_{\beta Q_\infty}e^{\phi}\tau_{-q_0}\mu\right)(\R^d)<+\infty.
    \end{equation}
    Then, for any $\eps>0$ and $1\leq p<+\infty$, there exists an almost surely finite constant $C=C(\beta,d,Q_\infty,S_\infty,\alpha_\phi,p,\eps,\mu,\omega)>0$ such that, for sufficiently large $t>0$,
    \begin{equation}\label{eq:Chip->0}
        \left(\int_{\R^d}\left|\frac{d\pi_t^\mu}{d\pi_t^{\mu_\phi}}-1\right|^p\;d\pi_t^{\mu_\phi}\right)^{1/p}\leq C(\beta,d,Q_\infty,S_\infty,\alpha_\phi,p,\eps,\mu,\omega) e^{-(\alpha_\phi-\eps)t} 
        \left(\gamma_{\beta Q_\infty}e^{\phi}\tau_{-q_0}\mu\right)(\R^d)
    \end{equation}
\end{theorem}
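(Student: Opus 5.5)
The plan is to write the Radon--Nikodym derivative explicitly through the gauge representation of Theorem~\ref{thm:gauge}, identify the deterministic ratio $f_t$ as an explicit Gaussian convolution, and then control $f_t$ at the random shift $\Sigma_\phi p_t$ using Lemma~\ref{lem:cov_err} and the convergence $S_t\to S_\infty$ from Lemma~\ref{lem:Et}.

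\emph{Reduction.} By Theorem~\ref{thm:gauge} and Step~2 of its proof,
\[
\frac{d\pi_t^\mu}{d\pi_t^{\mu_\phi}}(x)=Z_t^{-1}f_t(x-q_t),
\qquad
\pi_t^{\mu_\phi}=N(q_t-\Sigma_\phi p_t,\Sigma_\phi).
\]
Here $f$ is the unique nonnegative classical solution (Lemma~\ref{lem:OU;unique}) of \eqref{eq:Fokker-Planck;ratio} with vague initial trace $\eta:=e^{\phi}\tau_{-q_0}\mu$. The tail condition \eqref{eq:tail_cond} gives \eqref{eq:eta;tail}. Set $M=F+R_\phi$. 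The OU transition law is $X_t^{\phi,x}\sim N(e^{-Mt}x,Q_t)$, so the candidate
\[
f_t(x)=(\eta\ast\gamma_{Q_t})(e^{-Mt}x)
\]
is a classical solution with the right vague trace; by uniqueness it is $f$. Changing variables $x=q_t-\Sigma_\phi p_t+y$, the $\chi^p$ quantity becomes
\[
Z_t^{-1}\Big(\int_{\R^d}\big|g_t(y)-Z_t\big|^p\,d\gamma_{\Sigma_\phi}(y)\Big)^{1/p},
\qquad g_t(y):=(\eta\ast\gamma_{Q_t})(a_t(y)),
\]
where, with $S_t$ from \eqref{def:rt},
\[
a_t(y):=e^{-Mt}(y-\Sigma_\phi p_t)=e^{-Mt}y+S_t,
\qquad Z_t=\int_{\R^d} g_t\,d\gamma_{\Sigma_\phi}.
\]

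\emph{Main estimate.} Let $c_\infty:=(\eta\ast\gamma_{Q_\infty})(S_\infty)$. This is strictly positive and almost surely finite, since $\eta\neq0$ and $(\gamma_{\beta Q_\infty}\eta)(\R^d)<\infty$. For $t$ large enough that $\tfrac12Q_\infty\preceq Q_t\preceq Q_\infty$, split
\[
|g_t(y)-c_\infty|\le \big|\eta\ast\gamma_{Q_t}(a_t)-\eta\ast\gamma_{Q_\infty}(a_t)\big|+\big|\eta\ast\gamma_{Q_\infty}(a_t)-\eta\ast\gamma_{Q_\infty}(S_\infty)\big|.
\]
Write $G(c):=(1+|c|^2)\exp\bigl(c^TQ_\infty^{-1}c/(2(\beta-1))\bigr)$. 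Then:
\begin{itemize}
\item The first term is bounded by \eqref{eq:cov_err_1} together with \eqref{eq:Q_infty-Q_t}, giving $Ce^{-2(\alpha_\phi-\eps)t}G(a_t)$.
\item The second term is bounded by \eqref{eq:cov_err_2} with $Q_t=Q_\infty$, giving $C|a_t-S_\infty|\sup_{c\in[a_t,S_\infty]}G(c)$.
\end{itemize}
Using \eqref{eq:op-norm;Ceps} and \eqref{eq:r_infty-S_t},
\[
|a_t-S_\infty|\le Ce^{-(\alpha_\phi-\eps)t}(|y|+C(\eps,\omega)).
\]
Every $c\in[a_t,S_\infty]$ satisfies $|c|\le |S_\infty|+C+Ce^{-(\alpha_\phi-\eps)t}|y|$. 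Hence for all $t\ge t_0(\omega)$,
\[
\sup_{c\in[a_t,S_\infty]}G(c)\le C(\omega)\,e^{\delta_t|y|^2}(1+|y|^2),
\qquad \delta_t\to0.
\]

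\emph{Main obstacle.} The step I expect to be most delicate is making the Gaussian weight integrable uniformly in $t$. I would choose $t_0$ so that $p\,\delta_t<\tfrac14\|\Sigma_\phi\|^{-1}$ for $t\ge t_0$; then $(1+|y|)^pe^{p\delta_t|y|^2}(1+|y|^2)^p$ is $\gamma_{\Sigma_\phi}$-integrable with a $t$-uniform bound. This yields
\[
\|g_t-c_\infty\|_{L^p(\gamma_{\Sigma_\phi})}\le C\,e^{-(\alpha_\phi-\eps)t}\,(\gamma_{\beta Q_\infty}\eta)(\R^d).
\]
The constant depends on $S_\infty$, $C(\eps,\omega)$, $p$, $\beta$, $Q_\infty$ and $d$. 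It is almost surely finite on the common full-measure event of Lemma~\ref{lem:Et}.

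\emph{Conclusion.} Taking $p=1$ gives $|Z_t-c_\infty|\le \|g_t-c_\infty\|_{L^1}\le Ce^{-(\alpha_\phi-\eps)t}$. In particular $Z_t\ge c_\infty/2$ for large $t$. Then
\[
Z_t^{-1}\|g_t-Z_t\|_{L^p}\le \frac{2}{c_\infty}\Big(\|g_t-c_\infty\|_{L^p}+|Z_t-c_\infty|\Big),
\]
which gives \eqref{eq:Chip->0}, with $c_\infty^{-1}$ absorbed into the $\mu$- and $\omega$-dependent constant.
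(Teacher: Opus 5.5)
Your proposal is correct and follows essentially the same route as the paper. It uses the gauge representation with $f_t(x)=(\eta\ast\gamma_{Q_t})(e^{-Mt}x)$ identified via Lemma~\ref{lem:OU;unique}, reduces to $L^p(\gamma_{\Sigma_\phi})$-convergence of $f_t(\cdot-\Sigma_\phi p_t)$ to a positive random constant with $Z_t\ge c_\infty/2$ eventually, and gets the rate from Lemma~\ref{lem:cov_err}, \eqref{eq:Q_infty-Q_t} and Lemma~\ref{lem:Et}. The differences are cosmetic: you split into two terms (the covariance error at the moving point $a_t(y)$, then a spatial difference at fixed covariance $Q_\infty$) where the paper uses three terms with the covariance error evaluated at the fixed limit point, and you keep the sign convention of \eqref{def:rt} consistently, whereas the paper's proof silently flips it to $-S_t$, $-S_\infty$.
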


\begin{remark}\label{rmk:observability}
Before proceeding to the proof, we make a few remarks on the tail condition \eqref{eq:tail_cond}. When $\mu\in\P(\R^d)$ is absolutely continuous, the condition is equivalent to
\begin{equation}\label{eq:f0;betaQ_infty}
    f_0:=\tau_{-q_0}\left(\frac{d\mu}{d\mu_\phi}\right)\in L^{1}(\gamma_{\beta Q_\infty}),
\end{equation}
and $f_t$ in the proof below is the $L^1(\m_\phi)$ semigroup solution with initial data $f_0$. In fact, $\|f_0\|_{L^1(\gamma_{\beta Q_\infty})}$ equals $\left(\gamma_{\beta Q_\infty}e^{\phi}\tau_{-q_0}\mu\right)(\R^d)$ up to a multiplicative constant. In fact, our initial result was formulated in terms of $f_0$, but in the final proofreading of the manuscript, it was suggested by ChatGPT-6 Astra that the same proof yields a more general result without the absolute continuity assumption.

Moreover, if $(F,H)$ is observable, one can show that the strict ordering $Q_\infty\prec \Sigma_\phi$ holds. In particular, there exists some $\beta>1$ such that $\Sigma_\phi\succeq \beta Q_\infty$. Thus, in this case, the tail condition \eqref{eq:tail_cond} holds for all $\mu\in\P(\R^d)$. 
\end{remark}

\begin{proof}
We split the proof into two steps. In \emph{Step 1.,} we reduce the question of $\chi^p$-divergence to a simpler statement, which is addressed in \emph{Step 2.,} by application of Lemma~\ref{lem:cov_err}
\medskip

\emph{Step 1.}
    From Theorem~\ref{thm:gauge}, we have
    \[\pi_t^{\mu_\phi}=N(q_t-\Sigma_\phi p_t,\Sigma_\phi)\propto e^{-\phi(x-q_t)-p_t\cdot x}.\]
    Let
    \[f_t(x)=e^{-w(t,x)+\lambda t+\phi(x)},\]
    where $w(t,x)=-\log\varrho(t,x)$, $\varrho$ being the solution of \eqref{eq:parabolic;varrho} with initial trace $\tau_{-q_0}\mu$. As $f_t$ satisfies the Kolmogorov backward equation for the OU process \eqref{eq:OU;M} with $M=F+R_\phi$, we have the representation
    \begin{equation}\label{OU;convolution}
        f_t(x)=(\gamma_{Q_t}\ast \eta_\mu)(e^{-(F+R_\phi)t}x) \text{ where } \eta_\mu:=e^{\phi(x)}(\tau_{-q_0}\mu);
    \end{equation}
    from the tail condition \eqref{eq:tail_cond} it follows that the above convolution is well-defined for all $t>0$, and the characterization is valid by Lemma~\ref{lem:OU;unique}. 

    Moreover,
    \begin{align*}
        \frac{d\pi_t^\mu}{d\pi_t^{\mu_\phi}}(x)&=\frac{\exp(-w(x-q_t)-p_t\cdot x +\phi(x-q_t)+p_t \cdot x)}{\int_{\R^d}\exp(-w(y-q_t)-p_t\cdot y +\phi(y-q_t)+p_t \cdot y)\;d\pi_t^{\mu_\phi}(y)}\\
        &=\frac{f_t(x-q_t)}{\int f_t(\cdot-q_t)d\pi_t^{\mu_\phi}}
        =\frac{f_t(x-q_t)}{\int f_t(\cdot-\Sigma_\phi p_t)\;d\gamma_{\Sigma_\phi}}.
    \end{align*}
    Writing
    \[Z_t:=\int f_t(\cdot-\Sigma_\phi p_t)\;d\gamma_{\Sigma_\phi},\]
    the $\chi^p$-divergence is
    \begin{align*}
        \int_{\R^d}\left|\frac{d\pi_t^\mu}{d\pi_t^{\mu_\phi}}-1\right|^p\;d\pi_t^{\mu_\phi}=\int_{\R^d} |Z_t^{-1} f_t(x-q_t)-1|^p  \;d\pi_t^{\mu_\phi}=\int_{\R^d} |Z_t^{-1} f_t(x-\Sigma_\phi p_t) -1|^p\;d\gamma_{\Sigma_\phi}(x).
    \end{align*}
    Recall from \eqref{eq:rt} that $S_t:=e^{-(F+R_\phi)t}\Sigma_\phi p_t\rightarrow S_\infty<+\infty$ almost surely. Thus, we have the for all $\gamma_{\Sigma_\phi}$-a.e. $x\in\R^d$,
    \begin{equation}\label{eq:ft->Z_infty}
        f_t(x-\Sigma_\phi p_t)=\eta_\mu\ast \gamma_{Q_t}(e^{-(F+R_\phi)t}x -S_t)\rightarrow \eta_\mu\ast \gamma_{Q_\infty}(-S_\infty)=:Z_\infty,
    \end{equation}
    where $Z_\infty >0$ almost surely positive.
    In particular, if $f_t(\cdot-\Sigma_\phi p_t)\rightarrow Z_\infty$ in $L^1(\gamma_{\Sigma_\phi})$, $Z_t\rightarrow Z_\infty$ follows, hence for sufficiently large $t>0$ so that $Z_t\geq Z_\infty/2$,
    \begin{align*}
        \left(\int_{\R^d}\left|\frac{d\pi_t^\mu}{d\pi_t^{\mu_\phi}}-1\right|^p\;d\pi_t^{\mu_\phi}\right)^{1/p}&=Z_t^{-1} \|f_t(\cdot-\Sigma_\phi p_t)-Z_t\|_{L^p(\gamma_{\Sigma_\phi})} \\
        &\leq 2 Z_\infty^{-1} \|f_t(\cdot-\Sigma_\phi p_t)-Z_t\|_{L^p(\gamma_{\Sigma_\phi})}\leq 4 Z_\infty^{-1} \|f_t(\cdot-\Sigma_\phi p_t)-Z_\infty\|_{L^p(\gamma_{\Sigma_\phi})}.
    \end{align*}
    Thus, it suffices to quantify the rate of $L^p(\gamma_{\Sigma_\phi})$-convergence of $f_t(\cdot-\Sigma_\phi p_t)$ to $Z_\infty$.
\medskip

\emph{Step 2.}
     To this end, note that that by the triangle inequality 
 \begin{equation}\label{eq:triangle}
 \begin{split}
     |f_t(x-\Sigma_\phi p_t)-Z_\infty|=&|\eta_\mu\ast\gamma_{Q_t}(e^{-(F+R_\phi)t}x-S_t)-\eta_\mu\ast\gamma_{Q_\infty}(-S_\infty)|\\
        \leq& |\eta_\mu\ast\gamma_{Q_t}(e^{-(F+R_\phi)t}x-S_t)-\eta_\mu\ast\gamma_{Q_t}(-S_t)|\\
        &+|\eta_\mu\ast\gamma_{Q_t}(-S_t)-\eta_\mu\ast\gamma_{Q_t}(-S_\infty)|+|\eta_\mu\ast\gamma_{Q_t}(-S_\infty)-\eta_\mu\ast\gamma_{Q_\infty}(-S_\infty)|.
\end{split}
 \end{equation} We now analyze each term in \eqref{eq:triangle}. Set $M= F+R_\phi$. Then the first term can be bounded using \eqref{eq:cov_err_2}:
 \begin{align*}
     &\int_{\R^d} |\eta_\mu\ast\gamma_{Q_t}(e^{-Mt}x-S_t)-\eta_\mu\ast\gamma_{Q_t}(-S_t)|^p d\gamma_{\Sigma_{\phi}}(x)\\
     &\leq C(\beta,d,Q_\infty)|(\gamma_{\beta Q_\infty}\eta_\mu)(\R^d)|^p \int_{\R^d}|e^{-Mt}x|^p\left(\sup_{c\in[e^{-Mt}x-S_t,-S_t]} (1+|c|^2) \exp\left(\frac{c^T Q_\infty^{-1} c}{2(\beta-1)}\right)\right)^p d\gamma_{\Sigma_\phi}(x)\\
     &\leq C(\beta,d,Q_\infty)|(\gamma_{\beta Q_\infty}\eta_\mu)(\R^d)|^p\\
     &\times\int_{\R^d}|e^{-Mt}x|^p (1+2|e^{-Mt}x|^2+2|S_t|^2)^p \exp\left(\frac{\|Q_\infty^{-1}\|}{(\beta-1)}(|S_t|^2 +|e^{-Mt}x|^2
     )\right)^p d\gamma_{\Sigma_\phi}(x).
 \end{align*} For $t$ sufficiently large, we can bound the integral above by
 \begin{align*}
     &\int_{\R^d}|e^{-Mt}x|^p (1+2|e^{-Mt}x|^2+2|S_t|^2)^p \exp\left(\frac{\|Q_\infty^{-1}\|}{(\beta-1)}(|S_t|^2 +|e^{-Mt}x|^2
     )\right)^p d\gamma_{\Sigma_\phi}(x)\\
     &\leq C(\beta,d,Q_\infty,S_\infty)e^{-p(\alpha_\phi-\eps)t}\int_{\R^d}|x|^p(1+|x|^2)^p\exp\left(\frac{\|Q_\infty^{-1}\|}{\beta-1}\|e^{-Mt}\|^2 |x|^2\right)^pd\gamma_{\Sigma_\phi}(x)\\
     &\leq C(\beta,d,Q_\infty,S_\infty,\alpha_\phi,p,\eps)e^{-p(\alpha_\phi-\eps)t}
 \end{align*} Note that in the second to last line, we used that the tail decay of $\gamma_{\Sigma_\phi}$ cancels out the growth of $\exp\left(\frac{\|Q_\infty^{-1}\|}{\beta-1}\|e^{-Mt}\|^2 |x|^2\right)$ with when $t$ is sufficiently large, ensuring that $\|e^{-Mt}\|$ is small. Thus the integral remains bounded (uniform in sufficiently large $t$). Hence, the first term decays to 0 as $t\to\infty$.
 
 Decay of the second term in the right-hand side of \eqref{eq:triangle} follows directly from and by \eqref{eq:cov_err_1}:
 \begin{align*}
     |\eta_\mu\ast \gamma_{Q_t}(-S_\infty)-\eta_\mu\ast \gamma_{Q_\infty}(-S_\infty)| &\leq C(\beta,d,Q_\infty,S_\infty)\|Q_t-Q_\infty\| (\gamma_{\beta Q_\infty}\eta_\mu)(\R^d).
 \end{align*}
 
Lastly, using \eqref{eq:cov_err_2}, the third term in \eqref{eq:triangle} is bounded by
 \begin{align*}
&|\eta_\mu\ast \gamma_{Q_t}(-S_t)-\eta_\mu\ast\gamma_{Q_t}(-S_\infty)|\\
&\leq C(\beta,d,Q_\infty)|S_t-S_\infty| (\gamma_{\beta Q_\infty}\eta_\mu)(\R^d)\sup_{c\in[-S_t,-S_\infty]} (1+|c|^2)\exp\left(\frac{c^T Q_\infty^{-1}c}{2(\beta-1)}\right)\\
&\leq |S_t-S_\infty| C(\beta,d, Q_{\infty},S_\infty) (\gamma_{\beta Q_\infty}\eta_\mu)(\R^d).
 \end{align*} In the last line, we choose $t$ sufficiently large. Then \eqref{eq:Chip->0} follows from \eqref{eq:Q_infty-Q_t} and \eqref{eq:r_infty-S_t}
 \begin{align*}
      \left(\int_{\R^d}\left|\frac{d\pi_t^\mu}{d\pi_t^{\mu_\phi}}-1\right|^p\;d\pi_t^{\mu_\phi}\right)^{1/p} &\leq C (\gamma_{\beta Q_\infty}\eta_\mu)(\R^d)\left(|S_t-S_\infty|+\|Q_t-Q_\infty\|+e^{-(\alpha_\phi-\eps)t}\right)\\
      &\leq C(\gamma_{\beta Q_\infty}\eta_\mu)(\R^d) e^{-(\alpha_\phi-\eps)t}
 \end{align*}
 where the random constant changes in each line, but is almost surely finite, and its dependence on parameters is specified in \eqref{eq:Chip->0}.
 \end{proof}

\begin{remark}\label{rmk:hypercontractivity}
    Note that under the assumptions, there is no reason why $\chi^p(\pi_t^\mu||\pi_t^{\mu_\phi})$ should be finite initially. Yet, the tail condition \eqref{eq:tail_cond} ensures that the $\chi^p$-divergence becomes finite at later times. This is reminiscent of hypercontractivity, which gains higher integrability at later time given $f_0$ from \eqref{eq:f0;betaQ_infty} is in $L^{1+\delta}(\gamma_{Q_\infty})$) for some $\delta>0$, and equivalent condition to the log-Sobolev inequality~\cite{Gross1975}. However, our initial tail condition is weaker than initial higher integrability.
\end{remark}

\begin{corollary}\label{cor:TV,W2->0}
    Suppose $(F,H)$ satisfies Assumption~\ref{ass:detectable} for any $\mu,\nu\in\P(\R^d)$ satisfying \eqref{eq:tail_cond} for some $\beta>0$,
    \begin{equation}\label{eq:TV->0}
        \limsup_{t\rightarrow\infty}\frac1t \log\|\pi_t^\mu-\pi_t^\nu\|_{TV}\leq -\alpha_\phi.
    \end{equation} 
    and
    \begin{equation}\label{eq:W2->0}
        \limsup_{t\rightarrow\infty}\frac1t \log W_2(\pi_t^\mu,\pi_t^{\nu})\leq -\alpha_\phi.
    \end{equation}
\end{corollary}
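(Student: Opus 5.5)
The plan is to route both estimates through the common reference filter $\pi_t^{\mu_\phi}$ of Theorem~\ref{thm:OF_stability}, with the same $q_0$ for both initial laws. First a bookkeeping point: the hypothesis ``for some $\beta>0$'' has to be read as $\beta>1$, as in \eqref{eq:tail_cond}. Moreover, since $\gamma_{\beta Q_\infty}$ increases pointwise in $\beta$ up to the normalizing constant, the tail condition for $\beta$ implies it for any $\beta'\in(1,\beta]$. So a single $\beta>1$ works for both $\mu$ and $\nu$, and we fix $q_0=0$ (any fixed $q_0$ would do).

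\emph{Total variation.} Theorem~\ref{thm:OF_stability} with $p=1$ gives, almost surely and for $t$ large,
\[
\|\pi_t^\mu-\pi_t^{\mu_\phi}\|_{TV}=\int_{\R^d}\left|\frac{d\pi_t^\mu}{d\pi_t^{\mu_\phi}}-1\right|\,d\pi_t^{\mu_\phi}\le C e^{-(\alpha_\phi-\eps)t},
\]
and the same bound holds with $\nu$ in place of $\mu$. The triangle inequality then gives $\|\pi_t^\mu-\pi_t^\nu\|_{TV}\le 2Ce^{-(\alpha_\phi-\eps)t}$. Taking $\frac1t\log$ and $\limsup$ yields $-(\alpha_\phi-\eps)$. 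We intersect the almost sure events over rational $\eps\downarrow0$ and obtain \eqref{eq:TV->0}.

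\emph{Wasserstein.} Again we use the triangle inequality $W_2(\pi_t^\mu,\pi_t^\nu)\le W_2(\pi_t^\mu,\pi_t^{\mu_\phi})+W_2(\pi_t^\nu,\pi_t^{\mu_\phi})$. Each term is bounded by the weighted total-variation estimate (Villani, Theorem 6.15),
\[
W_2(\alpha,\gamma)^2\le 2\int_{\R^d}|x-m_t|^2\,d|\alpha-\gamma|(x),
\]
centered at the mean $m_t=q_t-\Sigma_\phi p_t$ of the Gaussian $\pi_t^{\mu_\phi}=N(m_t,\Sigma_\phi)$. Cauchy--Schwarz with respect to $\pi_t^{\mu_\phi}$ then gives
\[
W_2(\pi_t^\mu,\pi_t^{\mu_\phi})^2\le 2\Big(\int|x-m_t|^4\,d\pi_t^{\mu_\phi}\Big)^{1/2}\chi^2(\pi_t^\mu\|\pi_t^{\mu_\phi})^{1/2}.
\]
The fourth moment on the right is the deterministic constant $\int|x|^4\,d\gamma_{\Sigma_\phi}$, because the randomness of $m_t$ only translates the measure. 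By Theorem~\ref{thm:OF_stability} with $p=2$, $\chi^2(\pi_t^\mu\|\pi_t^{\mu_\phi})^{1/2}\le Ce^{-(\alpha_\phi-\eps)t}$. This gives $W_2(\pi_t^\mu,\pi_t^{\mu_\phi})\lesssim e^{-(\alpha_\phi-\eps)t/2}$, which is only half the claimed rate.

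\emph{Main obstacle: the full rate for $W_2$.} To recover the full rate we apply Cauchy--Schwarz differently, with the square in the right place. We use instead the transport bound
\[
W_2^2(\alpha,\gamma)\le C\,\chi^2(\alpha\|\gamma)\cdot\Big(\int |x-m_t|^4 \,d\gamma\Big)^{1/2},
\]
which holds for $\gamma$ Gaussian. It can be derived by writing $\alpha=g\gamma$ and using $W_2^2\le 4\,\mathrm{Var}_\gamma(\sqrt g)\cdot C_P$-type estimates. Alternatively one can use the transport–$\chi^2$ inequality $W_2(\alpha,\gamma)\le \sqrt{2C_P}\,\chi^2(\alpha\|\gamma)^{1/2}$, where $C_P=\|\Sigma_\phi\|$ is the Poincaré constant of $\gamma_{\Sigma_\phi}$. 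Here $C_P$ is deterministic and translation invariant. Combined with Theorem~\ref{thm:OF_stability} for $p=2$, which bounds $\chi^2(\pi_t^\mu\|\pi_t^{\mu_\phi})^{1/2}$ by $Ce^{-(\alpha_\phi-\eps)t}$, this gives $W_2\le Ce^{-(\alpha_\phi-\eps)t}$ directly. This Poincaré-based inequality (Ding, Liu / Ledoux) is the step I would check carefully; it is where the correct rate is obtained. The proof then concludes as in the TV case, sending $\eps\downarrow0$ along rationals.
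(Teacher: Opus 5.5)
Your proposal is correct and is essentially the paper's argument. Both compare each filter with the common Gaussian reference $\pi_t^{\mu_\phi}=N(q_t-\Sigma_\phi p_t,\Sigma_\phi)$ and conclude with the triangle inequality. You read off total variation directly from the case $p=1$ of Theorem~\ref{thm:OF_stability}, whereas the paper passes through $\kl\le\chi^2$ and Pinsker. Your transport--$\chi^2$ bound is, for this Gaussian reference, exactly the paper's Talagrand inequality combined with $\kl\le\log(1+\chi^2)\le\chi^2$, namely $W_2^2(\pi_t^\mu,\pi_t^{\mu_\phi})\le 2\|\Sigma_\phi\|\,\chi^2(\pi_t^\mu\|\pi_t^{\mu_\phi})$ with a deterministic constant. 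So the step you flagged for checking is fine.

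Two small fixes. Drop the ``$\mathrm{Var}_\gamma(\sqrt g)$'' heuristic: it cannot give a global bound, since $\mathrm{Var}_\gamma(\sqrt g)\le 1$ while $W_2$ is unbounded. Also justify ``any fixed $q_0$ would do'': changing $q_0$ in \eqref{eq:tail_cond} multiplies the integrand only by the exponential of an affine function of the integration variable. That factor is absorbed by the extra Gaussian decay gained when you lower $\beta$ to any $\beta'\in(1,\beta)$.
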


\begin{proof}
    As the $\chi^2$-divergence controls the $\kl$-divergence, from \eqref{eq:Chip->0} it immediately follows that
    \begin{equation}\label{eq:KL->0}
        \limsup_{t\rightarrow\infty}\frac1t\log\kl(\pi_t^\mu||\pi_t^{\mu_\phi})\leq 2\alpha(-F-R_\phi).
    \end{equation}
    Then \eqref{eq:TV->0} follows from Pinsker's inequality and the triangle inequality. Moreover, as $\pi_t^{\mu_\phi}$ is normally distributed with a fixed covariance $\Sigma_\phi$, by Talagrand's T2 inequality
    \begin{align*}
        \frac12W_2^2(\pi_t^\mu,\pi_t^{\mu_\phi})\leq C_{\Sigma_\phi} \kl(\pi_t^\mu||\pi_t^{\mu_\phi}) \text{ for all }t\geq0,
    \end{align*}
    where the constant $C_{\Sigma_\phi}$ only depends on the smallest eigenvalue of $\Sigma_\phi$. Thus, by triangle inequality \eqref{eq:W2->0} follows.
\end{proof}

\begin{remark}\label{rmk:OF_stability;comparison}
    We mention previously known results under comparable settings.
    Under detectability of $(F,H)$, Ocone and Pardoux~\cite[Theorem 2.6]{OconePardeoux96asymptotic} showed that, when $\mu,\nu\in\P(\R^d)$ have finite second moments, given any bounded uniformly continuous test function $\psi\in C(\R^d)$,
    \begin{equation}\label{eq:Ocone-Pardoux}
        |\pi_t^\mu(\psi)-\pi_t^\nu(\psi)|\rightarrow 0 \text{ almost surely. }
    \end{equation}
    Moreover, the exponent $\alpha_\phi$ for our asymptotic exponential decay coincides with the exponent for their quantitative result~\cite[Theorem 2.3, Remark 2.7]{OconePardeoux96asymptotic} for the conditional mean.
    
    On the other hand, van Handel applied his general filter stability result under uniform observability condition in the linear coefficient setting to deduce that, when $(F,H)$ satisfies the observability condition, for all $\mu,\nu\in\P(\R^d)$ (in our setting where the signal noise is nondegenerate, hence controllability condition holds automatically)~\cite{vanHandel09uniform}
    \begin{equation}\label{eq:vanHandel;observ}
        \|\pi_t^\mu-\pi_t^\nu\|_{BL}\rightarrow 0 \text{ almost surely; }
    \end{equation}
    here $\|\cdot\|_{BL}$ is the bounded Lipschitz norm, which metrizes the narrow convergence of probability measures, hence is weaker than both $W_2$ and the total variation norm.

    Corollary~\ref{cor:TV,W2->0} provides a quantitative rate of convergence in metrics stronger than both. Moreover, filter stability in the much stronger $\chi^p$-divergence in Theorem~\ref{thm:OF_stability} suggests that, filter stability with respect to a correct `reference optimal filter' may be easier to study and hold in a stronger sense.
\end{remark}

\bigskip

\noindent{\bf Acknowledgments.}
SP is partially supported by the Department of Defense Vannevar Bush Faculty Fellowship, award N00014-22-1-2790 (PI: Andrew M. Stuart). MVN is partially supported by NSF CAREER Award 2340762 (PI: Franca Hoffmann).

The authors would like to thank Franca Hoffmann and Andrew M. Stuart for their support and helpful discussions.

\bigskip

\noindent{\bf AI statement.}
In preparing this manuscript, ChatGPT 5.5 and 5.6 Sol were used in limited capacity for literature search, and performing and cross-verifying routine calculations.

Beyond basic tasks, the above models were used to assist in the following parts of the manuscript: slight sharpening of the estimates by suggesting useful elementary inequalities (such as the Gaussian covariance derivative formula) in the proof of Lemma~\ref{lem:cov_err}, which led to a minor relaxation of the assumption on the initial law; decomposition of $E_t$ in \eqref{eq:Et;decomposition} in \emph{Step 2} of the proof of Lemma~\ref{lem:Et} which allowed us to remove finite second moment assumption on the initial law; justification the Gaussian mixture representation of Bene\v{s} and Karatzas~\cite{BenesKaratzas83} by Girsanov's formula in the proof of Lemma~\ref{lem:Benes-Karatzas}; detailed probabilistic setup of the signal-observation model in Section~\ref{sssec:SigObs}, which is inspired by Section 1.1.2 of van Handel's thesis~\cite{vanHandel06Thesis}. While the suggestions by the AI models are limited to elementary or standard techniques available in the literature, it nevertheless saved the authors a nontrivial amount of time. The authors have carefully verified, digested, and reorganized any suggestion that has been made by AI models.

ChatGPT-6 Astra was used to provide feedback on a complete draft of the manuscript, where the model pointed out various typos and suggested a minor generalization of the statement of Theorem~\ref{thm:OF_stability} (hence Corollary~\ref{cor:TV,W2->0}) that immediately followed from our proofs; see Remark~\ref{rmk:observability}.

We emphasize that AI models were not used to produce any of the main ideas contained in this manuscript. The ideas, as outlined in Section~\ref{sssec:main ideas}, were conceived between February and March of 2026 by the authors while following the trails of thought in previous works by human authors, who we have credited carefully in our summary, Section~\ref{ssec:summary}. Moreover, AI models were not used to write any part of the manuscript, and the authors assume full responsibility for all content of this paper.

\bibliography{bib}

@article{HoffmannParkStuart2026EnKF,
  title={Large-time behavior and accuracy of the Mean-Field Ensemble Kalman Filter in the Linear Detectable Setting},
  author={Hoffmann, Franca and Park, Sangmin and Stuart, Andrew M.},
  journal={arXiv preprint arXiv:2606.07729},
  year={2026}
}

@article{FujisakiKallianpurKunita1972,
  author  = {Fujisaki, Masatoshi and Kallianpur, Gopinath and Kunita, Hiroshi},
  title   = {Stochastic Differential Equations for the Non Linear Filtering Problem},
  journal = {Osaka Journal of Mathematics},
  volume  = {9},
  number  = {1},
  pages   = {19--40},
  year    = {1972},
  doi     = {10.18910/5728}
}

@article{Pardoux1979,
  author  = {Pardoux, Etienne},
  title   = {Stochastic Partial Differential Equations and Filtering of Diffusion Processes},
  journal = {Stochastics},
  volume  = {3},
  pages   = {127--167},
  year    = {1979},
  doi     = {10.1080/17442507908833142}
}

@article{KrylovRozovskii1979,
  author  = {Krylov, Nikolai V. and Rozovski{\u\i}, Boris L.},
  title   = {Stochastic Evolution Equations},
  journal = {Journal of Soviet Mathematics},
  volume  = {16},
  number  = {4},
  pages   = {1233--1277},
  year    = {1981},
  note    = {English translation of Itogi Nauki i Tekhniki, Ser. Sovrem. Probl. Mat., vol. 14 (1979), pp. 71--146},
  doi     = {10.1007/BF01084893}
}

@article{ChaleyatMaurelMichelPardoux1990,
  author  = {Chaleyat-Maurel, Mireille and Michel, Dominique and Pardoux, Etienne},
  title   = {Un th{\'e}or{\`e}me d'unicit{\'e} pour l'{\'e}quation de {Z}aka{\"i}},
  journal = {Stochastics and Stochastic Reports},
  volume  = {29},
  number  = {1},
  pages   = {1--12},
  year    = {1990},
  doi     = {10.1080/17442509008833605}
}

@article{FlemingMitter1982,
  author  = {Fleming, Wendell H. and Mitter, Sanjoy K.},
  title   = {Optimal Control and Nonlinear Filtering for Nondegenerate Diffusion Processes},
  journal = {Stochastics},
  volume  = {8},
  number  = {1},
  pages   = {63--77},
  year    = {1982},
  doi     = {10.1080/17442508208833228}
}

@article{Pardoux1982,
  author  = {Pardoux, Etienne},
  title   = {{\'E}quations du filtrage non lin{\'e}aire de la pr{\'e}diction et du lissage},
  journal = {Stochastics},
  volume  = {6},
  number  = {3-4},
  pages   = {193--231},
  year    = {1982},
  doi     = {10.1080/17442508208833204}
}

@article{BarasBlankenshipHopkins1983,
  author  = {Baras, John S. and Blankenship, Gilmer L. and Hopkins, William E.},
  title   = {Existence, Uniqueness, and Asymptotic Behavior of Solutions to a Class of {Z}akai Equations with Unbounded Coefficients},
  journal = {IEEE Transactions on Automatic Control},
  volume  = {28},
  number  = {2},
  pages   = {203--214},
  year    = {1983},
  doi     = {10.1109/TAC.1983.1103218}
}

@article{KurtzOcone1988,
  author  = {Kurtz, Thomas G. and Ocone, Daniel L.},
  title   = {Unique Characterization of Conditional Distributions in Nonlinear Filtering},
  journal = {The Annals of Probability},
  volume  = {16},
  number  = {1},
  pages   = {80--107},
  year    = {1988},
  doi     = {10.1214/aop/1176991887}
}

@article{BhattKallianpurKarandikar1995,
  author  = {Bhatt, Abhay G. and Kallianpur, Gopinath and Karandikar, Rajeeva L.},
  title   = {Uniqueness and Robustness of Solution of Measure-Valued Equations of Nonlinear Filtering},
  journal = {The Annals of Probability},
  volume  = {23},
  number  = {4},
  pages   = {1895--1938},
  year    = {1995},
  doi     = {10.1214/aop/1176987808}
}

@article{Simon81JFA,
  title={Large time behavior of the {$L^p$} norm of {S}chr{\"o}dinger semigroups},
  author={Simon, Barry},
  journal={Journal of Functional Analysis},
  volume={40},
  number={1},
  pages={66--83},
  year={1981},
  publisher={Elsevier}
}

@article{Murata84Largetime,
  title={Positive solutions and large time behaviors of {S}chr{\"o}dinger semigroups, {S}imon's problem},
  author={Murata, Minoru},
  journal={Journal of {F}unctional {A}nalysis},
  volume={56},
  number={3},
  pages={300--310},
  year={1984},
  publisher={Elsevier}
}

@article{Murata1986,
  author  = {Murata, Minoru},
  title   = {Structure of Positive Solutions to
             {$(-\Delta+V)u=0$} in {$\mathbb{R}^n$}},
  journal = {Duke Mathematical Journal},
  volume  = {53},
  number  = {4},
  pages   = {869--943},
  year    = {1986},
  doi     = {10.1215/S0012-7094-86-05347-0}
}

@article{Pinchover88Duke,
author = {Yehuda Pinchover},
title = {{On positive solutions of second-order elliptic equations, stability results, and classification}},
volume = {57},
journal = {Duke Mathematical Journal},
number = {3},
publisher = {Duke {U}niversity {P}ress},
pages = {955 -- 980},
year = {1988},
doi = {10.1215/S0012-7094-88-05743-2},
URL = {https://doi.org/10.1215/S0012-7094-88-05743-2}
}

@article{Simon82BAMS,
author = {Barry Simon},
title = {{Schrödinger semigroups}},
volume = {7},
journal = {Bulletin (New Series) of the American Mathematical Society},
number = {3},
publisher = {American Mathematical Society},
pages = {447 -- 526},
year = {1982},
}

@article{Agmon82,
  title={On positivity and decay of solutions of second order elliptic equations on {R}iemannian manifolds},
  author={Agmon, Shmuel},
  journal={Methods of functional analysis and theory of elliptic equations},
  pages={19--52},
  year={1982},
  publisher={Liguori Editore}
}

@book{Pinsky95book,
  title={Positive harmonic functions and diffusion},
  author={Pinsky, Ross G.},
  volume={45},
  year={1995},
  publisher={Cambridge {U}niversity {P}ress}
}

@article{Doob57htransform,
  title={Conditional {B}rownian motion and the boundary limits of harmonic functions},
  author={Doob, Joseph L.},
  journal={Bulletin de la Soci{\'e}t{\'e} math{\'e}matique de France},
  volume={85},
  pages={431--458},
  year={1957}
}

@article{Krylov10well-posedness,
  title={{Kalman--Bucy filter and SPDEs with growing lower-order coefficients in $W_p^{1} $ spaces without weights}},
  author={Krylov, Nikolai V.},
  journal={Illinois Journal of Mathematics},
  volume={54},
  number={3},
  pages={1069--1114},
  year={2010},
  publisher={Duke {U}niversity {P}ress}
}

@article{SunZhouYau26well-posedness,
  title={Well-posedness of Filtering Equations in Weighted {S}obolev Spaces with Unbounded System Coefficients},
  author={Sun, Zeju and Zhou, Songlin and Yau, Stephen S-T},
  journal={arXiv preprint arXiv:2609.03549},
  year={2026}
}

@article{KaliathZakai71AnnMathStat,
  title={Absolute continuity and Radon-Nikodym derivatives for certain measures relative to {W}iener measure},
  author={Kailath, Thomas and Zakai, Moshe},
  journal={The Annals of Mathematical Statistics},
  pages={130--140},
  year={1971},
  publisher={JSTOR}
}

@article{KalStr68,
  title={Estimation of stochastic systems: Arbitrary system process with additive white noise observation errors},
  author={Kallianpur, Gopinath and Striebel, Charlotte},
  journal={The Annals of Mathematical Statistics},
  volume={39},
  number={3},
  pages={785--801},
  year={1968},
  publisher={JSTOR}
}

@article{BhattKarandikar01,
  title={Path continuity of the nonlinear filter},
  author={Bhatt, Abhay G. and Karandikar, Rajeeva L.},
  journal={Statistics \& probability letters},
  volume={54},
  number={1},
  pages={75--78},
  year={2001},
  publisher={Elsevier}
}

@book{LipsterShirayev01,
  title={Statistics of Random Processes: I. General Theory},
  author={Liptser, Robert S. and Shiryaev, Albert N.},
  edition={2nd, rev. and expanded},
  volume={5},
  series={Applications of Mathematics},
  year={2001},
  publisher={Springer-Verlag},
  address={Berlin, Heidelberg},
  isbn={978-3-540-63929-9}
}

@book{Harrison13,
  title={Brownian models of performance and control},
  author={Harrison, J. Michael},
  year={2013},
  publisher={Cambridge {U}niversity {P}ress}
}

@book{Resnick07heavy,
  title={Heavy-tail phenomena: probabilistic and statistical modeling},
  author={Resnick, Sidney I.},
  year={2007},
  publisher={Springer}
}

@book{BKRS2022FPK,
  title={Fokker--{P}lanck--{K}olmogorov Equations},
  author={Bogachev, Vladimir I. and Krylov, Nicolai V. and R{\"o}ckner, Michael and Shaposhnikov, Stanislav V.},
  volume={207},
  year={2022},
  publisher={American Mathematical Society}
}

@article{Aronson71add,
  title={Non-negative solutions of linear parabolic equations: An addendum},
  author={Aronson, DG},
  journal={Annali della Scuola Normale Superiore di Pisa-Scienze Fisiche e Matematiche},
  volume={25},
  number={2},
  pages={221--228},
  year={1971}
}

@article{Aronson68,
  title={Non-negative solutions of linear parabolic equations},
  author={Aronson, Donald G.},
  journal={Annali della Scuola Normale Superiore di Pisa-Scienze Fisiche e Matematiche},
  volume={22},
  number={4},
  pages={607--694},
  year={1968}
}

@article{Thiffeault03,
  title={Advection--diffusion in {L}agrangian coordinates},
  author={Thiffeault, Jean-Luc},
  journal={Physics Letters A},
  volume={309},
  number={5-6},
  pages={415--422},
  year={2003},
  publisher={Elsevier}
}

@book{RozovskyLototsky18book,
  title={Stochastic evolution systems},
  author={Rozovsky, Boris L. and Lototsky, Sergey V.},
  year={2018},
  publisher={Springer}
}

@article{BenesKaratzas83,
  title={Estimation and control for linear, partially observable systems with non-Gaussian initial distribution},
  author={Bene{\v{s}}, V{\'a}clav E. and Karatzas, Ioannis},
  journal={Stochastic Processes and their Applications},
  volume={14},
  number={3},
  pages={233--248},
  year={1983},
  publisher={Elsevier}
}

@book{Teschll12ODE,
  title={Ordinary differential equations and dynamical systems},
  author={Teschl, Gerald},
  volume={140},
  year={2012},
  publisher={American Mathematical Soc.}
}

@article{ArnoldCarlenJu08,
  title={{Large-time behavior of non-symmetric Fokker-Planck type equations}},
  author={Arnold, Anton and Carlen, Eric and Ju, Qiangchang},
  journal={Communications on Stochastic Analysis},
  volume={2},
  number={1},
  pages={11},
  year={2008}
}

@article{FontbonaJourdain16,
  author  = {Joaquin Fontbona and Benjamin Jourdain},
  title   = {A trajectorial interpretation of the dissipations of entropy and {F}isher information for stochastic differential equations},
  journal = {The Annals of Probability},
  volume  = {44},
  number  = {1},
  pages   = {131--170},
  year    = {2016},
  doi     = {10.1214/14-AOP969}
}

@article{Gross1975,
  author  = {Gross, Leonard},
  title   = {Logarithmic {S}obolev {I}nequalities},
  journal = {American Journal of Mathematics},
  volume  = {97},
  number  = {4},
  pages   = {1061--1083},
  year    = {1975},
  doi     = {10.2307/2373688}
}

@book{BGL14,
  title={Analysis and geometry of Markov diffusion operators},
  author={Bakry, Dominique and Gentil, Ivan and Ledoux, Michel},
  volume={103},
  year={2014},
  publisher={Springer}
}

@article{CrisanPardoux26,
  title={Uniqueness of the solution of the filtering equations in spaces of measures for general signal and observation processes},
  author={Crisan, Dan and Pardoux, Etienne},
  journal={The Annals of Applied Probability},
  volume={36},
  number={2},
  pages={1347--1376},
  year={2026},
  publisher={Institute of Mathematical Statistics}
}

@article{barles2020large,
  title={Large-time behavior of unbounded solutions of viscous {H}amilton--{J}acobi equations in {$\mathbb{R}^n$}},
  author={Barles, Guy and Quaas, Alexander and Rodr{\'\i}guez-Paredes, Andrei},
  journal={Communications in Partial Differential Equations},
  volume={46},
  number={3},
  pages={547--572},
  year={2020},
  publisher={Taylor \& Francis}
}

@article{barles2016unbounded,
  title={On unbounded solutions of ergodic problems in {$\mathbb{R}^m$} for viscous {H}amilton--{J}acobi equations},
  author={Barles, Guy and Meireles, Joao},
  journal={Communications in Partial Differential Equations},
  volume={41},
  number={12},
  pages={1985--2003},
  year={2016},
  publisher={Taylor \& Francis}
}

@article{IchiharaSheu13,
  title={Large time behavior of solutions of {H}amilton--{J}acobi--{B}ellman equations with quadratic nonlinearity in gradients},
  author={Ichihara, Naoyuki and Sheu, Shuenn-Jyi},
  journal={SIAM Journal on Mathematical Analysis},
  volume={45},
  number={1},
  pages={279--306},
  year={2013},
  publisher={SIAM}
}

@article{Ichi11recurrence,
  title={Recurrence and transience of optimal feedback processes associated with {B}ellman equations of ergodic type},
  author={Ichihara, Naoyuki},
  journal={SIAM {J}ournal on {C}ontrol and {O}ptimization},
  volume={49},
  number={5},
  pages={1938--1960},
  year={2011},
  publisher={SIAM}
}

@article{Ichihara12,
  title={Large time asymptotic problems for optimal stochastic control with superlinear cost},
  author={Ichihara, Naoyuki},
  journal={Stochastic Processes and their Applications},
  volume={122},
  number={4},
  pages={1248--1275},
  year={2012},
  publisher={Elsevier}
}

@book {BainCrisan09,
    AUTHOR = {Bain, Alan and Crisan, Dan},
     TITLE = {{Fundamentals of stochastic filtering}},
    SERIES = {Stochastic Modelling and Applied Probability},
    VOLUME = {60},
 PUBLISHER = {Springer, New York},
      YEAR = {2009},
     PAGES = {xiv+390},
      ISBN = {978-0-387-76895-3},
   MRCLASS = {60G35 (60-02 93E11)},
  MRNUMBER = {2454694},
MRREVIEWER = {Monique\ Pontier},
       DOI = {10.1007/978-0-387-76896-0},
       URL = {https://doi.org/10.1007/978-0-387-76896-0},
}

@article{MitterNewton03_variational,
  title={A variational approach to nonlinear estimation},
  author={Mitter, Sanjoy K. and Newton, Nigel J.},
  journal={SIAM {J}ournal on {C}ontrol and {O}ptimization},
  volume={42},
  number={5},
  pages={1813--1833},
  year={2003},
  publisher={SIAM}
}

@inproceedings{vanHandel10proceedings,
  title={Nonlinear filtering and systems theory},
  author={van Handel, Ramon},
  booktitle={Proceedings of the 19th International Symposium on Mathematical Theory of Networks and Systems (MTNS semi-plenary paper)},
  year={2010}
}

@article{vanHandel09uniform,
  title={Uniform observability of hidden Markov models and filter stability for unstable signals.},
  author={van Handel, Ramon},
  journal={The Annals of Applied Probability},
  volume={19},
  number={3},
  pages={1172--1199},
  year={2009},
  publisher={Institute of Mathematical Statistics}
}

@article{Atar98AoP,
  title={Exponential stability for nonlinear filtering of diffusion processes in a noncompact domain},
  author={Atar, Rami},
  journal={The Annals of Probability},
  pages={1552--1574},
  year={1998},
  publisher={JSTOR}
}

@article{AtarZeitouni97lyapunov,
  title={Lyapunov exponents for finite state nonlinear filtering},
  author={Atar, Rami and Zeitouni, Ofer},
  journal={SIAM Journal on Control and Optimization},
  volume={35},
  number={1},
  pages={36--55},
  year={1997},
  publisher={SIAM}
}

@article{AtarZeitouni97exp,
  title={Exponential stability for nonlinear filtering},
  author={Atar, Rami and Zeitouni, Ofer},
  journal={Annales de l'Institut Henri Poincare (B) Probability and Statistics},
  volume={33},
  number={6},
  pages={697--725},
  year={1997},
  organization={Elsevier}
}

@article{Duncan68evaluation,
  title={Evaluation of likelihood functions},
  author={Duncan, Tyrone E.},
  journal={Information and Control},
  volume={13},
  number={1},
  pages={62--74},
  year={1968},
  publisher={Elsevier}
}

@article{Duncan70absolute,
  title={On the absolute continuity of measures},
  author={Duncan, Tyrone E.},
  journal={The Annals of Mathematical Statistics},
  volume={41},
  number={1},
  pages={30--38},
  year={1970},
  publisher={JSTOR}
}

@phdthesis{Duncan67,
  author = {Duncan, Tyrone E.},
  title = {Probability Densities for Diffusion Processes with Applications
           to Nonlinear Filtering Theory and Detection Theory},
  school = {Stanford University},
  year = {1967}
}

@phdthesis{Mortensen66,
  author = {Mortensen, Richard E.},
  title = {Optimal Control of Continuous-Time Stochastic Systems},
  school = {University of California, Berkeley},
  year = {1966}
}

@article{Zakai69optimal,
  title={On the optimal filtering of diffusion processes},
  author={Zakai, Moshe},
  journal={Zeitschrift f{\"u}r Wahrscheinlichkeitstheorie und verwandte Gebiete},
  volume={11},
  number={3},
  pages={230--243},
  year={1969},
  publisher={Springer}
}

@article{BaxChiPav04Wonham,
  title={Asymptotic stability of the {W}onham filter: ergodic and nonergodic signals},
  author={Baxendale, Peter and Chigansky, Pavel and Liptser, Robert},
  journal={SIAM {J}ournal on {C}ontrol and {O}ptimization},
  volume={43},
  number={2},
  pages={643--669},
  year={2004},
  publisher={SIAM}
}

@article{Budhiraja2003,
  author  = {Budhiraja, Amarjit},
  title   = {Asymptotic Stability, Ergodicity and Other Asymptotic Properties of the Nonlinear Filter},
  journal = {Annales de l'Institut Henri Poincar{\'e}, Probabilit{\'e}s et Statistiques},
  volume  = {39},
  number  = {6},
  pages   = {919--941},
  year    = {2003},
  doi     = {10.1016/S0246-0203(03)00022-0}
}

@article{KrylovRozovskii77,
  title={{On the Cauchy problem for linear stochastic partial differential equations}},
  author={Krylov, Nikolai V. and Rozovski{\u\i}, Boris L.},
  journal={Mathematics of the USSR-Izvestiya},
  volume={11},
  number={6},
  pages={1267--1284},
  year={1977}
}

@article{KrylovRozovskii82,
  title={Stochastic partial differential equations and diffusion processes},
  author={Krylov, Nikolai V. and Rozovski{\u\i}, Boris L.},
  journal={Russian Mathematical Surveys},
  volume={37},
  number={6},
  pages={81--105},
  year={1982}
}

@article{Kushner64,
  title={{On the differential equations satisfied by conditional probablitity densities of Markov processes, with applications}},
  author={Kushner, Harold J.},
  journal={Journal of the Society for Industrial and Applied Mathematics, Series A: Control},
  volume={2},
  number={1},
  pages={106--119},
  year={1964},
  publisher={SIAM}
}

@article{Stratonovich59,
  title={On the theory of optimal non-linear filtering of random functions},
  author={Stratonovich, Ruslan L.},
  journal={Theory of Probability \& Its Applications},
  volume={4},
  pages={223--225},
  year={1959}
}

@book{LanRod95,
  title={{Algebraic Riccati equations}},
  author={Lancaster, Peter and Rodman, Leiba},
  year={1995},
  publisher={Clarendon {P}ress}
}

@article{Bougerol93,
  title={{Kalman filtering with random coefficients and contractions}},
  author={Bougerol, Philippe},
  journal={SIAM Journal on Control and Optimization},
  volume={31},
  number={4},
  pages={942--959},
  year={1993},
  publisher={SIAM}
}

@article{KalmanBucy61,
  title={New Results in Linear Filtering and Prediction Theory},
  author={Kalman, Rudolf E. and Bucy, Richard S.},
  journal={Journal of Basic Engineering},
  volume={83},
  number={1},
  pages={95--108},
  year={1961},
  publisher={ASME International}
}

@article{Kalman60,
  title={A New Approach to Linear Filtering and Prediction Problems},
  author={Kalman, Rudolf E.},
  journal={Journal of Basic Engineering},
  volume={82},
  number={1},
  pages={35--45},
  year={1960},
  publisher={ASME International}
}

@article{anderson71stability,
  title={{Stability properties of Kalman-Bucy filters}},
  author={Anderson, Brian D.O.},
  journal={Journal of the Franklin Institute},
  volume={291},
  number={2},
  pages={137--144},
  year={1971},
  publisher={Elsevier}
}

@article{Makowski86,
  title={Filtering formulae for partially observed linear systems with non-{G}aussian initial conditions},
  author={Makowski, Armand M.},
  journal={Stochastics: An International Journal of Probability and Stochastic Processes},
  volume={16},
  number={1-2},
  pages={1--24},
  year={1986},
  publisher={Taylor \& Francis}
}

@inproceedings{SowersMakowski90,
  title={Discrete-time filtering for linear systems in correlated noise with non-{G}aussian initial conditions: formulas and asymptotics},
  author={Sowers, Richard B and Makowski, Armand M},
  booktitle={Robust Control of Linear Systems and Nonlinear Control: Proceedings of the International Symposium MTNS-89, Volume II},
  pages={407--419},
  year={1990},
  organization={Springer}
}

@article{Bucy67global,
  title={{Global theory of the {Riccati} equation}},
  author={Bucy, Richard S.},
  journal={Journal of Computer and System Sciences},
  volume={1},
  number={4},
  pages={349--361},
  year={1967},
  publisher={Elsevier}
}

@article{Wonham68riccati,
  title={{On a matrix Riccati equation of stochastic control}},
  author={Wonham, William M.},
  journal={SIAM Journal on Control},
  volume={6},
  number={4},
  pages={681--697},
  year={1968},
  publisher={SIAM}
}

@article{OconePardeoux96asymptotic,
  title={Asymptotic stability of the optimal filter with respect to its initial condition},
  author={Ocone, Daniel and Pardoux, Etienne},
  journal={SIAM Journal on Control and Optimization},
  volume={34},
  number={1},
  pages={226--243},
  year={1996},
  publisher={SIAM}
}

@article{Stratonovich60conditional,
author = {Stratonovich, Ruslan L.},
title = {{Conditional Markov Processes}},
journal = {Theory of Probability \& Its Applications},
volume = {5},
number = {2},
pages = {156-178},
year = {1960},
doi = {10.1137/1105015},

URL = { 
    
        https://doi.org/10.1137/1105015
    
    

},
eprint = { 
    
        https://doi.org/10.1137/1105015
    
    

}
}

@phdthesis{vanHandel06Thesis,
  title={Filtering, stability, and robustness},
  author={van Handel, Ramon},
  year={2006},
  school={California Institute of Technology}
}

@article{vanH09PTRF,
  title={Observability and nonlinear filtering},
  author={van Handel, Ramon},
  journal={Probability Theory and Related Fields},
  volume={145},
  number={1},
  pages={35--74},
  year={2009},
  publisher={Springer}
}

@article{Stannat06stability,
  title={Stability of the optimal filter via pointwise gradient estimates},
  author={Stannat, Wilhelm},
  journal={Lecture Notes in Pure and Applied Mathematics},
  volume={245},
  pages={281},
  year={2006},
  publisher={New York; Marcel Dekker; 1999}
}

@article{Stannat05time-dep,
  title = {Stability of the Filter Equation for a Time-Dependent Signal on $\mathbb{R}^d$},
  author = {Stannat, Wilhelm},
  journal = {Applied Mathematics and Optimization},
  volume = {52},
  number = {1},
  pages = {39--71},
  year = {2005},
  publisher = {Springer}
}

@article{AGS14Inventiones,
  title={{Calculus and heat flow in metric measure spaces and applications to spaces with Ricci bounds from below}},
  author={Ambrosio, Luigi and Gigli, Nicola and Savar{\'e}, Giuseppe},
  journal={Inventiones mathematicae},
  volume={195},
  number={2},
  pages={289--391},
  year={2014},
  publisher={Springer}
}

@article{vanHandel09stability,
  title={The stability of conditional {M}arkov processes and {M}arkov chains in random environments},
  author={van Handel, Ramon},
  journal={The Annals of Probability},
  volume={37},
  number={5},
  pages={1876--1925},
  year={2009}
}
\bibliographystyle{alpha}

\begin{appendix}

\section{Proof of Lemma~\ref{lem:Benes-Karatzas}}
In this section, we provide a proof of Lemma~\ref{lem:Benes-Karatzas} that verifies that the unnormalized filter \eqref{def:canonical_filtersol:Zakai} is a classical solution of the Zakai equation in the sense of Definition~\ref{def:Zakai;classical}.
\begin{proof}[Proof of Lemma~\ref{lem:Benes-Karatzas}]
    Item (iv) of Definition~\ref{def:Zakai;classical} regarding initial trace directly follows from the almost sure continuity of $t\mapsto\rho_t^\mu$ with respect to the narrow convergence due to Bhatt and Karandikar~\cite[Theorem 2.2]{BhattKarandikar01}.

    To verify items (i)-(iii), it suffices to verify the Gaussian mixture representation of the unnormalized filter density $\rho_t^\mu$ due to Bene\v{s} and Karatzas~\cite[Theorem 5.1]{BenesKaratzas83}, which takes the form \eqref{eq:Gauss-mixture;BK}. Indeed, once the Gaussian-mixture representation formula for $\rho_t^\mu$, numerator of \cite[(5.17)]{BenesKaratzas83}, is justified, this immediately verifies measurability (item (i)) and regularity (item (ii)). Finally, an application of It\^{o}'s formula verifies item (iii); see \cite[(5.6)]{BenesKaratzas83}.
    
    Thus, we focus on justifying \eqref{eq:Gauss-mixture;BK} directly. As discussed in the beginning of Section~\ref{ssec:canonical_filter}, by Girsanov's theorem, under $\bQ^\mu$, $(\Gamma^{-1/2}Y_s)_{s\leq t}$ is a standard Brownian motion, and $X_t$ and $Y_t$ are independent. Thus, as Bene\v{s} and Karatzas note, we can disintegrate with respect to the initial law $\mu$ to obtain
    \[\rho_t^\mu(\psi)=\int_{\R^d} \E^{\bQ^x}[\psi(X_t)L_t^{-1}|\Y_t]\;d\mu(x),\]
    where $\bQ^x=\bQ^{\delta_{x}}$ is defined as in~\eqref{def:Qmu}.
    By applying the Kallianpur-Striebel formula~\eqref{eq:Kallianpur-Striebel} with $\bP^x,\bQ^x$, we obtain
    \[\E^{\bQ^x}[\psi(X_t)L_t^{-1}|\Y_t]=\E^{\bQ^x}[L_t^{-1}|\Y_t]\E^{\mathbb{P}^x}[\psi(X_t)|\Y_t].\]
    As $\delta_{x}$ can be considered a Gaussian measure with mean $x$ and covariance $0$, $\E^{\mathbb{P}^x}[\psi(X_t)|(Y_s)_{s\leq t}]$ can be expressed in terms of the solutions of the Kalman-Bucy filter~\cite[Section 5]{BenesKaratzas83}. 
    
    Write $m_t(x):=\E^{\mathbb{P}^x}[X_t|\Y_t]$. To justify the formula \cite[(5.4)]{BenesKaratzas83}
    \begin{equation}\label{eq:Lambda_tx}
        \Lambda_t(x):=\E^{\bQ^x}[L_t^{-1}|\Y_t]=\exp\left(\int_0^t (H m_s(x))^T\Gamma^{-1}\;dY_s - \frac12\int_0^t |Hm_s(x)|_\Gamma^2\;ds\right),
    \end{equation}
    note that $\Lambda_t(x)=\frac{d\bP^x|_{\Y_t}}{d\bQ^x|_{\Y_t}}$. As under $\bQ^x$ and filtration $(\Y_t)_{t\geq 0}$, $(\Gamma^{-1/2}Y_t)_{t\geq 0}$ is a standard Brownian motion and $\Lambda_t(x)$ is a strictly positive (a.s.) martingale, there exists some $(\Y_t)_{t\geq 0}$-predictable process $(\theta_t)_{t\geq 0}$ such that
    \[d\Lambda_t(x)= \Lambda_t(x)\theta_t\cdot \Gamma^{-1/2}dY_t.\]
    Then, by Girsanov's theorem,
    \[\Gamma^{-1/2} Y_t-\int_0^t\theta_s\;ds\]
    is a Brownian motion under $\bP^x$ and $(\Y_t)_{t\geq 0}$. On the other hand, it is well known (see for instance \cite[Theorem 7.16]{LipsterShirayev01}, \cite[Theorem 1.12]{Harrison13}) that under $\bP^x$
    \[\Gamma^{-1/2} Y_t-\int_0^t\Gamma^{-1/2} Hm_s(x)\;ds\]
    is a Brownian motion with respect to $(\Y_t)_{t\geq 0}$. Thus, we conclude $\theta_t=\Gamma^{-1/2}Hm_t(x)$ by the uniqueness of the semimartingale decomposition, and \eqref{eq:Lambda_tx} is justified.
\end{proof}

\end{appendix}

\end{document}